\newcounter{continue-counter}
\newcommand{\C}{\mathbb{C}}
\newcommand{\N}{\mathbb{N}}
\newcommand{\Z}{\mathbb{Z}}
\newcommand{\Q}{\mathbb{Q}}
\newcommand{\R}{\mathbb{R}}
\newcommand{\kk}{\mathbbm{k}}
\newcommand{\SymGroup}[1][n]{\mathfrak{S}_{#1}}
\newcommand{\Specht}[1]{\chi_{#1}}
\newcommand{\TransposePartition}[1]{{#1}^{\ast}}
\newcommand{\Schur}[1]{\mathbb{S}_{#1}}
\newcommand{\into}{\hookrightarrow}
\newcommand{\onto}{\twoheadrightarrow}
\newcommand{\MultTriv}[1]{M_{#1}}
\newcommand{\ConfLabeled}[2]{{#1}^{\bullet}\otimes_{\Fin_{\ast}}{#2}_{\bullet}}
\newcommand{\Loday}[2]{\mathcal{L}({#1},{#2})}
\newcommand{\n}[1][n]{\mathbf{#1}}
\newcommand{\FinPointedSet}[1]{\n[#1]_{+}}
\newcommand{\EndSpace}[1]{\operatorname{Maps}({#1},{#1})}
\newcommand{\Vect}[1]{\textrm{Vect}_{#1}}
\newcommand{\ModuliCurve}[1]{\mathcal{M}_{#1}}
\newcommand{\OperSuspension}{\Sigma}
\newcommand{\sgn}[1][n]{\operatorname{sgn}_{#1}}
\newcommand{\Ind}[2]{\operatorname{Ind}_{#1}^{#2}}
\DeclareMathOperator{\Aut}{Aut}
\DeclareMathOperator{\End}{End}
\DeclareMathOperator{\FB}{FB}
\DeclareMathOperator{\Fin}{Fin}
\DeclareMathOperator{\GL}{GL}
\DeclareMathOperator{\gr}{gr}
\DeclareMathOperator{\GradedVect}{grVect}
\DeclareMathOperator{\Hom}{Hom}
\DeclareMathOperator{\Lie}{Lie}
\DeclareMathOperator{\Out}{Out}
\DeclareMathOperator{\Res}{Res}
\DeclareMathOperator{\Suspension}{S}
\DeclareMathOperator{\Set}{Set}
\DeclareMathOperator{\Sym}{Sym}
\theoremstyle{plain}
\newtheorem{thm}{Theorem}[section]
\newtheorem{lem}[thm]{Lemma}
\newtheorem{prop}[thm]{Proposition}
\newtheorem{conj}[thm]{Conjecture}
\newtheorem{cor}[thm]{Corollary}
\newtheorem{claim}[thm]{Claim}
\theoremstyle{definition}
\newtheorem{defi}[thm]{Definition}
\newtheorem{expl}[thm]{Example}
\newtheorem{observation}[thm]{Observation}
\theoremstyle{remark}
\newtheorem{rmk}[thm]{Remark}
\newtheorem{note}[thm]{Note}
\newtheorem{goal}[thm]{Goal}
\newtheorem{terminology}[thm]{Terminology}
\newcommand{\subjclass}[2][1991]{%
  \let\@oldtitle\@title%
  \gdef\@title{\@oldtitle\footnotetext{#1 \emph{Mathematics subject classification.} #2}}%
}
\newcommand{\keywords}[1]{%
  \let\@@oldtitle\@title%
  \gdef\@title{\@@oldtitle\footnotetext{\emph{Key words and phrases.} #1.}}%
}
\title{Configuration spaces on a wedge of spheres and Hochschild--Pirashvili homology}
\author{Nir Gadish\footnote{Department of Mathematics, University of Michigan, 530 Church St, Ann Arbor, MI 48109, USA } \and Louis Hainaut\footnote{Department of Mathematics, Stockholm University, 106 91 Stockholm, Sweden}}
\subjclass[2020]{Primary
55R80, 
Secondary 
14H10, 
14Q05, 
13D03, 
}
\date{\vspace{-5ex}}
\begin{document}

\maketitle

\abstract{
	We study the compactly supported rational cohomology of configuration spaces of points on wedges of spheres, equipped with natural actions of the symmetric group and the group $\Out(F_g)$ of outer automorphisms of the free group. These representations show up in seemingly unrelated parts of mathematics, from cohomology of moduli spaces of curves to polynomial functors on free groups and Hochschild--Pirashvili cohomology.
	
	We show that these cohomology representations form a polynomial functor, and use various geometric models to compute many of its composition factors. We further compute the composition factors completely for all configurations of $n\leq 10$ particles. An application of this analysis is a new super-exponential lower bound on the symmetric group action on the weight $0$ component of $ H^*_c(\ModuliCurve{2,n})$.
}

\tableofcontents

\section{Introduction}
This paper explores a circle of ideas centered around compactified configuration spaces of graphs and Hochschild cohomology, with applications to moduli spaces of marked curves and representation theory of outer automorphism groups of free groups. Revisiting ideas of Quoc H\^{o} \cite{Ho17}, we explain the relation between Hochschild cohomology with square-zero coefficients and configuration spaces -- two classes of mathematical objects that have independently been studied by many groups of researchers (see e.g. \cite{TW19,PV18} for the former, and \cite{Pet20,BCGY21} for the latter). This work seeks to popularize the connection, establish a dictionary, and bring the two points of view together to say new things about both objects as well as perform computations. Readers interested in either Hochschild homology, configuration spaces, polynomial representations of $\Out(F_n)$, moduli spaces of curves, or hairy graph complexes should be aware of how these subjects interact and share insights. The original motivation for this work is the following.

\subsection{Super-exponential unstable cohomology of \texorpdfstring{$\ModuliCurve{2,n}$}{M2,n}}
The moduli space $\ModuliCurve{2,n}$ of genus $2$ curves with $n$ marked points has complicated and mostly unknown rational cohomology. Considering the action of the symmetric group $\SymGroup$ by permuting marked points, the cohomology becomes an $\SymGroup$-representation.  In cohomological degree small compared to $n$ it exhibits representation stability as shown by Jiménez Rolland \cite{JR11}, consequently bounding Betti numbers to be eventually polynomial in $n$. In contrast, the orbifold Euler characteristic of $\ModuliCurve{2,n}$ was computed by Harer--Zagier to be $\frac{(-1)^{n+1}}{240}(n+1)!$
\cite[\S6]{HZ86}, thus necessitating super-exponential growth in $n$ as well as lots of odd-dimensional cohomology. This was awkward as there were almost no known infinite families of unstable cohomology classes, certainly not ones that grow super-exponentially in $n$ and reside in odd degrees (see the introduction of \cite{BBP} and Remark 1.3 in particular). Building on an unpublished formula of Petersen and Tommasi \cite{petersen-tommasi} (see Proposition \ref{prop:cohom-moduli-space}) we construct such families in the two highest non-trivial cohomological dimensions.

Our result is most naturally phrased Poincar\'{e} dually, on compact support cohomology. Below, $F(X,n)$ is the ordered configuration space of $n$ distinct points in $X$.

\begin{thm} \label{thm:subrep of M2n}
For every $n\geq 3$, the weight $0$ rational cohomology $\gr^W_0 H^{n+*}_c(\ModuliCurve{2,n};\Q)$ contains the following $\SymGroup$-subrepresentations

\begin{equation*}
\begin{cases}
\underset{k \text{ odd }}{\bigoplus} { \sgn\otimes H_{2(n-k)}(F(S^3,n)/SU(2))}^{\oplus \left\lfloor\frac{k}{6}\right\rfloor} \oplus H_{n-4}(\ModuliCurve{0,n}) & \text{if }  *=2 \\
\underset{k \text{ even}}{\bigoplus}  { \sgn\otimes H_{2(n-k)}(F(S^3,n)/SU(2))} ^{\oplus \left\lfloor\frac{k}{6}\right\rfloor} & \text{if }  *=3 
\end{cases}
\end{equation*}
where $\lfloor x\rfloor$ is the integer part of $x$. These degrees are Poincar\'{e} dual to the top two nontrivial degrees in which ordinary rational cohomology appears.


\end{thm}

The characters of $H_*(\ModuliCurve{0,n})$ and $H_{*}(F(S^3,n)/SU(2))$ have been computed by Getzler and Pagaria, respectively, and are recalled in \S\ref{sec:characters of M0n and FR3n}. In particular, one can check that these subrepresentations have total dimension on the order of $(n-2)!$ thus grow super-exponentially.

\begin{rmk}
Note the rather peculiar appearance 
of configurations on odd dimensional manifolds. This brings to mind Hyde's discovery \cite{Hyde20} that the $\SymGroup$-character of $H^*(F(\R^3,n))$ appears in counts of polynomials over finite fields. Hyde's formula was recently given a geometric explanation by Petersen--Tosteson \cite{PT21}. An analogous geometric explanation of Corollary \ref{thm:subrep of M2n} would be most pleasing, but no such is known.
\end{rmk}


As explained in Proposition \ref{prop:cohom-moduli-space} below, there is a transferal of information from Hochschild homology to moduli spaces of curves. With this, calculations of Powell and Vespa of the former allow us to prove a conjecture from \cite{BCGY21}.

\begin{cor} \label{cor:conjecture}
For all $n\in \N$, the standard representation $\Specht{(n-1,1)}=\operatorname{std_n}$  occurs in $\gr^W_0 H^{n+2}_c(\ModuliCurve{2,n})$ with multiplicity $\left\lfloor \frac{n}{12} \right\rfloor$, while the irreducible representation $\Specht{(2,1^{n-2})}=\operatorname{std_n}\otimes \sgn$ never occurs.
\end{cor}

Our analysis suggests a multitude of new conjectures about the multiplicity of certain $\SymGroup$-isotypical components inside $\gr_0^W H_c^*(\ModuliCurve{2,n})$, detailed in Conjecture \ref{conj:patterns-M2n}

\subsection{Configurations on wedges of spheres}

We access the unstable cohomology of $\ModuliCurve{2,n}$ via configuration spaces of points on graphs. These are complicated topological spaces, which appear e.g. when studying moduli of tropical curves \cite{BCGY21}, geometric group theory and applied topology \cite{GK02}.
We take particular interest in their compactly supported cohomology, which turns out to depend only on the first Betti number of the graph (the loop order $g$), and carries an action of the group $\Out(F_g)$ of outer automorphisms of the free group. These cohomology representations are of interest since, as we explain below, they are closely related to Hochschild--Pirashvili cohomology and play a universal role in the theory of `Outer' polynomials functors on free groups. Notably, the cohomology provides a large natural class of $\Out(F_g)$-representations that do not factor through $\GL_g(\mathbb{Z})$. Detailed in \S\ref{sec:intro-M2n}, the special case of these representations with $g=2$ is the key object that gives us a handle on the cohomology of $\ModuliCurve{2,n}$ and allows for the results mentioned above.


Recall the (ordered) configuration space of $n$ points on a topological space $X$
\[
    F(X,n) = \{(x_1, \ldots, x_n) \in X^n \mid x_i\neq x_j \; \forall i\neq j\} \subseteq X^n.
\]
An often overlooked observation is that, while the homotopy type of the configuration space of $n\geq 2$ points is famously not a homotopy invariant of $X$, the compactly supported cohomology $H^*_c(F(X,n);\Z)$ is an invariant of the proper homotopy type of $X$ under mild point-set hypotheses. We explain this in detail in \S\ref{sec:configurations with compact support}. Let us consider cohomology with $\Q$-coefficients.

Since every finite graph is equivalent to a wedge of circles $X = \bigvee_{i=1}^g S^1$, it suffices to study such wedges. The group of homotopy classes of auto-equivalences of such $X$ is naturally isomorphic to $\Out(F_g)$, and thus $H^*_c(F(\bigvee_{i=1}^g{S^1}, n); \Q)$ acquires a natural $\Out(F_g)$-action.

Varied motivations, including the theory of string-links and spaces of embeddings $\R^m\into \R^n$ \cite{TW17,TW19}, polynomial functors on free groups \cite{HPV15,PV18}, and unstable cohomology of moduli spaces, lead to the following hard open problem.
\begin{goal}\label{goal:understand wedges of circles}
Characterize the $\Out(F_g)$-representations $H^*_c(F(\bigvee_{i=1}^g{S^1}, n); \Q)$ for all finite wedges of circles.
\end{goal}

This problem is essentially out of reach with our methods due to the fact that representations of $\Out(F_g)$ are in general not semi-simple. However, after semi-simplification they become a direct sum of factors arising from $\GL_g(\Z)$-representations. The present work focuses therefore on the simpler question of determining only these composition factors, ignoring the extension problem of these factors.

We situate this problem as a special case of configurations on wedges of spheres of arbitrary dimension. Let $X = \bigvee_{i\in I} S^{d_i}$ be a finite wedge of spheres, possibly of different dimensions $d_i \geq 1$. 
Our main object of study is the rational cohomology with compact support $H^*_c(F(X,n);\Q)$ (or rather an associated graded thereof), and how it behaves under continuous maps, in particular as the number and dimensions of spheres vary.
This seemingly more general problem turns out to be no harder, as it is governed by a kind of \emph{polynomial functor} which is entirely determined by its values on wedges of circles, as explained next.

Recall that a \emph{symmetric sequence} is a sequence of vector spaces $(\Phi[m])_{m\in \mathbb{N}}$ such that $\Phi[m]$ carries an action by the symmetric group $\SymGroup[m]$\footnote{Such sequences are called \emph{linear species} in combinatorics, and \emph{$\FB$-modules} in the context of representation stability.}. Joyal's theory of analytic functors treats symmetric sequences as coefficients of a power series taking vector spaces to vector spaces:
\[
V \longmapsto \bigoplus_{m\in \mathbb{N}} \Phi[m] \otimes_{\SymGroup[m]} V^{\otimes m}.
\]
Such a functor is \emph{polynomial of degree at most $d$} if only the terms with $m\leq d$ are nonzero. Our setup involves functors in two variables, hence we consider coefficients $(\Phi[n,m])_{(n,m)\in \mathbb{N}^2}$ which are graded $\SymGroup\times\SymGroup[m]$-representations.

\begin{thm}[Polynomiality]\label{thm:polynomiality}
    Consider the full subcategory of compact topological spaces $X$ that are homotopy equivalent to wedges of spheres, i.e. $X \simeq \bigvee_{i\in I} S^{d_i}$. For every $n\in \N$ there exists a natural `collision' filtration on the functor $X\mapsto H_c^{\ast}(F(X, n))$, whose associated graded factors through $X\mapsto\tilde{H}^*(X)$ followed by a polynomial functor of degree $n$.
    
    Explicitly, there exists a collection of graded $\SymGroup\times\SymGroup[m]$-representations $\Phi[n,m]$, indexed by $(n,m)\in \N^2$ and not depending on $X$, and a natural $\SymGroup$-equivariant isomorphism of graded vector spaces
    \begin{equation} \label{eq:analytic functor}
    \gr H_c^{*}(F(X, n)) \cong \bigoplus_{m\in\N} {\Phi[n, m]\otimes_{\SymGroup[m]}(\tilde{H}^{\ast}(X))^{\otimes m}}.
    \end{equation}
    Here $\tilde{H}^{\ast}(X) \cong \bigoplus_{i\in I}\Q[-d_i]$ is considered as a graded vector space, and $\otimes$ is the graded tensor product whose symmetry uses the Koszul sign rule.
\end{thm}

We will prove that $\Phi[n,m]=0$ whenever $m>n$ and that $\Phi[n,n]\neq 0$, which means that the right hand side of \eqref{eq:analytic functor} is the evaluation at $\tilde{H}^*(X)$ of a polynomial functor of degree $n$.

\begin{rmk}
    The associated graded is necessary here. For example when $X = \bigvee_{i=1}^g{S^1}$ is a wedge of circles, both sides of \eqref{eq:analytic functor} are representations of $\Out(F_g)$, but the right hand side always factors through a representation of $\GL_g(\Z)$, while on the left hand side this is in general not the case without taking the associated graded -- see Remark \ref{rmk:extensions}.
\end{rmk}

With this polynomiality theorem, understanding the `coefficient' representations $\Phi[n,m]$ is an important step towards the determination of Hochschild--Pirashvili cohomology for all wedges of spheres. Instead of Goal \ref{goal:understand wedges of circles} we thus focus on the following. Below, we let $\Specht{\lambda}$ denote the irreducible representation of $\SymGroup$ associated with partition $\lambda\vdash n$.

\begin{goal} \label{goal:decompose coefficients}
Compute $\Phi[n,m]$ as a graded $\SymGroup\times\SymGroup[m]$-representation. That is, compute the graded multiplicity of the $\SymGroup\times\SymGroup[m]$-irreducible representation $\Specht{\lambda} \boxtimes \Specht{\mu}$ occurring in $\Phi[n,m]$ for every pair of partitions $\lambda \vdash n$ and $\mu \vdash m$.
\end{goal}

This computation is in general a hard problem, though our geometric perspective gives access to important special cases presented in Theorem \ref{thm:symmetric and alternating powers}, and it further reveals structure that has not been explored until recently -- e.g. the Lie structure discussed \S\ref{sec:lie structure}. We remark that Powell and Vespa study the same representations in \cite{PV18} from a purely algebraic perspective; they also address there the problem of determining the extensions between the composition factors.


For $X$ a wedge of circles, $H_c^*(F(X, n))$ turns out to be concentrated in degrees $* = n-1$ and $n$ only, see \S\ref{section:2-step}. Two classes of representations are particularly accessible.
\begin{thm}[\textbf{Symmetric and exterior powers}]\label{thm:symmetric and alternating powers}
    Let $X = \bigvee^g_{i=1} S^1$ be a wedge of circles and let $V = \Q^g \cong \tilde{H}^{\ast}(X)$ with its standard action by $\GL_g(\Z)$. The $\SymGroup$-equivariant isotypic component of the exterior power $\Lambda^m(V)$ in the associated graded $\gr H_c^*(F(X, n))$ is given up to isomorphism by
\begin{align}
    \gr H^n_c(F(X,n)) &\geq \bigoplus_{m\geq 0} H_{n-m}(\ModuliCurve{0,n}) \otimes \Lambda^m(\tilde{H}^*(X)) \\
    \gr H^{n-1}_c(F(X,n)) &\geq \bigoplus_{m\geq 0} H_{n-m-2}(\ModuliCurve{0,n}) \otimes \Lambda^m(\tilde{H}^*(X))
\end{align}
    
    For the symmetric power $\Sym^m(V)$, its $\SymGroup$-equivariant isotypic component in the associated graded $\gr H_c^*(F(X, n))$ is given by sign twists of the Whitehouse modules
    \begin{align}
    \gr H^n_c(F(X,n)) &\geq \bigoplus_{m\geq 0} \sgn\otimes H_{2(n-m)}(F(\R^3,n-1)) \otimes \Sym^m(\tilde{H}^*(X)) \\
    \gr H^{n-1}_c(F(X,n)) &\geq \bigoplus_{m\geq 0} \sgn\otimes H_{2(n-m-1)}(F(\R^3,n-1)) \otimes \Sym^m(\tilde{H}^*(X)) 
\end{align}
    with $\SymGroup$ acting via the identification $F(\mathbb{R}^3,n-1) \cong F(S^3,n)/SU_2$.
\end{thm}
See \S\ref{sec:multiplicity symmetric powers} for proofs. The $\SymGroup$-characters of $H_*(\ModuliCurve{0,n})$ and $H_*(F(\R^3,n-1))$ are recalled in \S\ref{sec:characters of M0n and FR3n}.

Encoding the `coefficient' representations $\Phi[n,m]$ by entries of a matrix indexed by partitions $(\lambda\vdash n, \mu \vdash m)$ as in Table \ref{table:n=7 colors} below, the last theorem fully describes the columns associated with partitions $\mu = (m)$ and $(1^m)$ for all $m\geq 1$.

\begin{rmk}
    We note the unexpected appearance of the spaces $\ModuliCurve{0,n}$ and $F(\R^3,n-1)$ while considering wedges of circles. This is a consequence of polynomiality in Theorem \ref{thm:polynomiality}, and its uniform treatment of all spheres. In particular, for $S^2 \cong \mathbb{C}P^1$ and $S^3 \cong SU_2$.
\end{rmk}

\subsection{Explicit computations}\label{sec:explicit computation}
We approach calculation with two distinct tools, each giving access to one of the two symmetric group actions on $\Phi[n,m]$ while obscuring the other.
\begin{itemize}
    \item A Chevalley--Eilenberg complex for $H^*_c(F(X,n))$, described by Petersen \cite{Pet20}, and an associated `collision' spectral sequence. This gives insight into the right $\SymGroup[m]$-action on $\Phi[n,m]$, determining the polynomial contribution of $\tilde{H}^*(X)$.
    \item An $\SymGroup$-equivariant cell structure on the one-point compactification $F(X,n)^+$, when $X$ is a wedge of circles. This makes the $\SymGroup$-action by permuting point labels relatively computable.
\end{itemize}
Overlaying the two sets of resulting data, we are able to compute many new irreducible multiplicities of $\Phi[n,m]$. In particualr, we calculate the $\SymGroup\times\SymGroup[m]$-character of $\Phi[n,m]$ completely for all $n\leq 10$, and a certain part of $\Phi[11,m]$. Beyond this range, the representations $\Phi[n,m]$ are currently out of reach.

Our calculation technique is illustrated in Appendix \ref{app:full-computation}, culminating in Table \ref{table:final_answer10} which shows all multiplicities for $n=10$. Complete tabulations of $\Phi[n,m]$ for all $n\leq 10$ can be found by following \href{https://louishainaut.github.io/GH-ConfSpace/}{this URL}\footnote{\url{https://louishainaut.github.io/GH-ConfSpace/}}.
{All the computations for this paper were performed on Sage \cite{sagemath}. Our algorithms are freely available on \href{https://github.com/louishainaut/GH-ConfSpace}{GitHub}\footnote{\url{https://github.com/louishainaut/GH-ConfSpace}}.}

\begin{table}[hbt!]
\centering
  \resizebox{0.83\textwidth}{!}{\begin{minipage}{\textwidth}
    \hspace{-0.45cm}\begin{tabular}{|
>{\columncolor[HTML]{C0C0C0}}c ||
>{\columncolor[HTML]{FFFFC7}}c |c|
>{\columncolor[HTML]{FFFFC7}}c |c|c|c|
>{\columncolor[HTML]{FFFFC7}}c |c|c|c|
>{\columncolor[HTML]{FFFFC7}}c |c|c|c|c|c|
>{\columncolor[HTML]{FFFFC7}}c |
>{\columncolor[HTML]{FFCCC9}}c |
>{\columncolor[HTML]{FFCCC9}}c |
>{\columncolor[HTML]{FFCCC9}}c |
>{\columncolor[HTML]{FFCCC9}}c |
>{\columncolor[HTML]{FFCCC9}}c |
>{\columncolor[HTML]{FFCE93}}c |}
\hline
 & $6$ & \cellcolor[HTML]{C0C0C0}$5,1$ & $5$ & \cellcolor[HTML]{C0C0C0} & \cellcolor[HTML]{C0C0C0} & \cellcolor[HTML]{C0C0C0} & $4$ & \cellcolor[HTML]{C0C0C0} & \cellcolor[HTML]{C0C0C0}... & \cellcolor[HTML]{C0C0C0} & $3$ & \cellcolor[HTML]{C0C0C0} & \cellcolor[HTML]{C0C0C0}... & \cellcolor[HTML]{C0C0C0} & \cellcolor[HTML]{C0C0C0}$2,1^2$ & \cellcolor[HTML]{C0C0C0}$2,1$ & $2$ &  & $1^5$ & $1^4$ & $1^3$ & $1^2$ & $1$ \\ \hline \hline
\cellcolor[HTML]{CBCEFB}$7$ &  &  \cellcolor[HTML]{CBCEFB} &  & \cellcolor[HTML]{CBCEFB} & \cellcolor[HTML]{CBCEFB} & \cellcolor[HTML]{CBCEFB} &  & \cellcolor[HTML]{CBCEFB} & \cellcolor[HTML]{CBCEFB}... & \cellcolor[HTML]{CBCEFB} & $1$ & \cellcolor[HTML]{CBCEFB} & \cellcolor[HTML]{CBCEFB}... & \cellcolor[HTML]{CBCEFB} & \cellcolor[HTML]{CBCEFB}$1$ & \cellcolor[HTML]{CBCEFB} &  & \cellcolor[HTML]{CBCEFB} & \cellcolor[HTML]{CBCEFB}1 & \cellcolor[HTML]{CBCEFB} & \cellcolor[HTML]{CBCEFB} & \cellcolor[HTML]{CBCEFB} &  \\ \hline
$6,1$ &  &  &  &  &  &  &  &  & ... &  &  &  & ... &  &  &  &  &  &  &  &  &  &  \\ \hline
$5,2$ &  &  &  &  &  &  &  &  & ... &  & 1 &  & ... &  &  & 1 &  &  &  & $1$ &  &  & $1$ \\ \hline
$5,1^2$ &  &  &  &  &  &  & $1$ &  & ... &  &  &  & ... &  &  &  & $2$ &  &  &  & $1$ & $1$ &  \\ \hline
$4,3$ &  &  &  &  &  &  &  &  & ... &  & $1$ &  & ... &  &  &  & $1$ &  &  &  &  & $1$ &  \\ \hline
$4,2,1$ &  &  &  &  &  &  &  &  & ... &  & $2$ &  & ... &  &  & $1$ & $2$ &  &  &  & $1$ & $1$ & $1$ \\ \hline
$4,1^3$ &  &  &  &  &  &  &  &  & ... &  & $1$ &  & ... &  &  &  & $1$ &  &  &  &  & $1$ &  \\ \hline
$3^2,1$ &  &  &  &  &  &  & $1$ &  & ... &  &  &  & ... &  &  &  & $2$ &  &  &  & $1$ & $1$ &  \\ \hline
$3,2^2$ &  &  &  &  &  &  &  &  & ... &  & $2$ &  & ... &  &  & $1$ &  &  &  &  &  &  & $1$ \\ \hline
$3,2,1^2$ &  &  &  &  &  &  & $1$ &  & ... &  & $1$ &  & ... &  &  &  & $2$ &  &  &  &  & $1$ & $1$ \\ \hline
$3,1^4$ &  &  & $1$ &  &  &  &  &  & ... &  & $1$ &  & ... &  &  &  &  &  &  &  &  &  & $1$ \\ \hline
$2^3,1$ &  &  &  &  &  &  &  &  & ... &  & $1$ &  & ... &  &  &  & $1$ &  &  &  &  & $1$ &  \\ \hline
$2^2,1^3$ &  &  &  &  &  &  & $1$ &  & ... &  &  &  & ... &  &  &  & $1$ &  &  &  &  &  &  \\ \hline
\cellcolor[HTML]{CBCEFB}$2,1^5$ &  & \cellcolor[HTML]{CBCEFB} &  & \cellcolor[HTML]{CBCEFB} & \cellcolor[HTML]{CBCEFB} & \cellcolor[HTML]{CBCEFB} &  & \cellcolor[HTML]{CBCEFB} & \cellcolor[HTML]{CBCEFB}... & \cellcolor[HTML]{CBCEFB} &  & \cellcolor[HTML]{CBCEFB} & \cellcolor[HTML]{CBCEFB}... & \cellcolor[HTML]{CBCEFB} & \cellcolor[HTML]{CBCEFB} & \cellcolor[HTML]{CBCEFB} &  & \cellcolor[HTML]{CBCEFB} & \cellcolor[HTML]{CBCEFB} & \cellcolor[HTML]{CBCEFB} & \cellcolor[HTML]{CBCEFB} & \cellcolor[HTML]{CBCEFB} &  \\ \hline
\cellcolor[HTML]{CBCEFB}$1^7$ & $1$ & \cellcolor[HTML]{CBCEFB} &  & \cellcolor[HTML]{CBCEFB} & \cellcolor[HTML]{CBCEFB} & \cellcolor[HTML]{CBCEFB} &  & \cellcolor[HTML]{CBCEFB} & \cellcolor[HTML]{CBCEFB}... & \cellcolor[HTML]{CBCEFB} &  & \cellcolor[HTML]{CBCEFB} & \cellcolor[HTML]{CBCEFB}... & \cellcolor[HTML]{CBCEFB} & \cellcolor[HTML]{CBCEFB} & \cellcolor[HTML]{CBCEFB} &  & \cellcolor[HTML]{CBCEFB} & \cellcolor[HTML]{CBCEFB} & \cellcolor[HTML]{CBCEFB} & \cellcolor[HTML]{CBCEFB} & \cellcolor[HTML]{CBCEFB} &  \\ \hline
\end{tabular}
    \caption{$(\lambda,\mu)$ multiplicity in $\Phi[7,m]$ in codimension 1, arranged in lex-order of partitions. Unspecified entries are all zero. Yellow and red columns correspond to symmetric and alternating powers, respectively, and are completely described by Theorem \ref{thm:symmetric and alternating powers}. Lilac rows are computed in \cite[Examples 3-4]{PV18}. 
    }
    \label{table:n=7 colors}
    \end{minipage}}
\end{table}
For example, Table \ref{table:n=7 colors} gives the complete decomposition of the lower degree of $\Phi[7,m]$ into irreducibles for all $m$. Equivalently, this is the irreducible decomposition of $\gr H^{6}_c(F(\bigvee_g S^1 ,7))$ as a representation of $\SymGroup[7]\times \Out(F_g)$ up to splitting the collision filtration. Note that our Theorem \ref{thm:symmetric and alternating powers} accounts for nearly all the nontrivial contributions. The cohomology in the other degree $ H^{7}_c(F(\bigvee_g S^1 ,7))$ can be quickly obtained from Table \ref{table:n=7 colors} using the Euler characteristic.

The same Euler characteristic calculation gives a lower bound on the equivariant multiplicities of $H^*_c(F(X,n))$. It shows, in particular, that every Schur functor (see \eqref{eq:defi-Schur-functors}) does occur in the cohomology, in fact super-exponentially many times as $n$ grows. See \S\ref{sec:lower-bound} and the example therein.




\subsection{Weight 0 cohomology of moduli spaces}\label{sec:intro-M2n}

The coefficients $\Phi[n,-]$ discussed above determine the weight $0$ cohomology groups $\gr_0^{W} H_c^{*}(\ModuliCurve{2,n}, \Q)$, studied in recent work of Chan, Galatius and Payne \cite{CGP22,CFGP}, via the following proposition. For a partition $\lambda \vdash m$ let $\Phi[n,\lambda]$ denote the $\SymGroup$-equivariant multiplicity space of the $\SymGroup[m]$-irreducible $\Specht{\lambda}$ in $\Phi[n,m]$, equivalently it is $ \Phi[n,m]\otimes_{\SymGroup[m]}\Specht{\lambda}$. Recall that $\Phi[-,-]$ is graded, and denote by $\Phi[-,-]^i$ the part in degree $i$.

\begin{prop}\label{prop:cohom-moduli-space}
    For every partition with at most $2$ parts $\lambda = (a, b)$ there exists a coefficient $r_{\lambda} \in \N$ such that for each $i\geq 0$ and $n \in \N$ there is an $\SymGroup$-equivariant isomorphism
    \begin{equation} \label{eq:decomposition of moduli spaces}
        \gr_0^{W}H_c^{i}(\ModuliCurve{2,n}) \cong \bigoplus_{\lambda = (a, b)}\left(\Phi[n,\TransposePartition{\lambda}]^{i-3-|\lambda|}\right)^{\oplus r_\lambda}
    \end{equation}
    where $\gr^W_0$ is the weight $0$ subspace of $H^*_c$ in the sense of Hodge theory, and the sum runs over partitions $\lambda$ with up to two parts and conjugate partition $\TransposePartition{\lambda}$.
    
    Explicitly, the coefficients are given by
    \begin{equation}
        r_{(a,b)} = \begin{cases}
        \left\lfloor \frac{a-b}{6} \right\rfloor +1 & \text{if }a\equiv_2 b \equiv_2 1 \\
        \left\lfloor \frac{a-b}{6} \right\rfloor & \text{if }a\equiv_2 b \equiv_2 0 \\
        0 & \text{if }a\not\equiv_2 b 
        \end{cases}
    \end{equation}
    where $\lfloor x \rfloor$ is the integer part of $x\in \Q$ and $x\equiv_2 y$ is equivalence modulo $2$.
\end{prop}
Our Theorem \ref{thm:subrep of M2n} is a direct corollary of this Proposition along with Theorem \ref{thm:symmetric and alternating powers}

Note that the direct sum is finite since $\Phi[n,\lambda] = 0$ if $|\lambda| > n$. Also, due to the grading of $\Phi[n,\lambda]$, Formula \eqref{eq:decomposition of moduli spaces} produces cohomology only in degrees $n+2$ and $n+3$.

\begin{rmk}
    Formula \eqref{eq:decomposition of moduli spaces} was discovered by Petersen and Tommasi \cite{petersen-tommasi}, and they explained it to us in private communication. We present here an alternative proof of this formula building on \cite[Theorem 1.2]{BCGY21}.
    Our determination of the coefficients $r_\lambda$ is new.
    
    The direct analog of \eqref{eq:decomposition of moduli spaces} does not hold in higher genus $g>2$. Instead, Petersen and Tommasi have informed us that it generalizes to the moduli space $\mathcal{H}_{g,n}$ of hyperelliptic curves, thus the calculations of this paper applies to the weight $0$ compactly supported cohomology of those.
\end{rmk}

%

Our computation of $\Phi[n,\lambda]$ for all $n\leq 10$ mentioned in \S\ref{sec:explicit computation} recovers the $\SymGroup$-character of $\gr^W_0 H^*_c(\ModuliCurve{2,n})$ as appearing in \cite{BCGY21}. We furthermore computed all terms $\Phi[11,\TransposePartition{(a,b)}]$, appearing in \eqref{eq:decomposition of moduli spaces}, thus obtaining the character of $\gr^W_0 H^*_c(\ModuliCurve{2,11})$. We do not have explicit calculations beyond that.
Nevertheless, in \S\ref{sec:cohomology-M2n} we give a new lower bound on the character of $\gr^W_0 H^*_c(\ModuliCurve{2,n})$, which grows very rapidly, and for $n\leq 11$ it accounts for a large portion of the full weight $0$ cohomology. 
%
%


\subsection{Hochschild cohomology and configuration spaces}

The Hochschild--Pirashvili homology of a wedge of circles is an invariant of a commutative algebra, which turns out to be fundamental to a wide range of fields: from rational homotopy theory of spaces of `long embeddings' $\mathbb{R}^m\into \mathbb{R}^n$ as studied by Turchin--Willwacher \cite{TW17}, to the theory of `Outer' polynomial functors on the category of free groups studied by Powell--Vespa \cite{PV18}.

We show in \S\ref{sec:hochschild}, these cohomology groups are related via Schur--Weyl duality to $H^*_c(F(\bigvee_{i=1}^g S^1,n);\Q)$. 
This relation is known to some experts, but it does not appear in the literature in a form that we can readily use. For the benefit of the reader we give the following identification. Let $A_{(n)} = \Q[\epsilon_1,\ldots,\epsilon_n]/(\epsilon_i\epsilon_j \mid i\leq j)$ be the square-zero algebra on $n$ generators, concentrated in degree $0$, equipped with an $\SymGroup$-action permuting the generators and multigraded by the degree of each generator; and let $A^{(n)}$ be its linear dual coalgebra.

\begin{thm}[\textbf{Hochschild cohomology and configurations}] \label{thm:hochschild and configurations}
Let $X$ be a simplicial set with finitely many non-degenerate simplices, and geometric realization $|X|$. For every positive integer $n$ the cohomology $H^*_c(F(|X|,n), \Q)$ is isomorphic to the $(1,\ldots,1)$-multigraded component of Hochschild--Pirashvili cohomology of $X_+ = X\sqcup \{*\}$ with coefficients in the coalgebra $A^{(n)}$. This isomorphism respects the $\SymGroup$-action on both sides, and is natural with respect to maps $X\to X'$.

In particular, for $X=\bigvee_{i=1}^g S^1$, whose group homotopy auto-equivalences is naturally identified with $\Out(F_g)$, this specializes to an isomorphism that respects the natural actions of $\SymGroup$ and $\Out(F_g)$ on both objects, with $\Out(F_g)$ identified with the homotopy equivalences $\bigvee_{i=1}^g S^1\to\bigvee_{i=1}^g S^1$.
\end{thm}

We think of this relationship as a geometric interpretation of the algebraic construction of Hoch\-schild--Pirash\-vili 
homology in \cite{Pir00}, and we prove it in \S\ref{sec:labeled configs}. 
This theorem reformulates a hard open problem of Hochschild--Pirashvili (co)homology \cite[\S 2.5]{TW19} as the following.

\subsection{Relations to previous work}
The representations $H^*_c(F(X,n))$ appear in previous work in the following forms. Turchin--Willwacher define in \cite[\S2.5]{TW19} a class of $\Out(F_g)$-representations which they call \emph{bead representations}. They show that these do not factor through $\GL(g, \Z)$, and are the smallest known representations with this property. Turchin--Willwacher pose the (still open) problem of describing the representations, and explain that they play a role in the rational homotopy theory of higher codimensional analogues of string links. Our current work provides a geometric interpretation of these representations as $H^*_c(F(\vee_{i=1}^g S^1,n))$, and computing our coefficients $\Phi[n,m]$ is equivalent to calculating the composition factors of the bead representations.

In later work Powell--Vespa \cite{PV18} discovered that the linear duals to the bead representations are fundamental to the theory of polynomial functors on the category of finitely generated free groups. They establish vast machinery to study these objects and compute the representations in some cases that are relevant to our work here. To make use of their work, and to make it more accessible to a topologically minded reader, we include a dictionary between their terminology and ours in \ref{sec:bead}. That section will recall the construction and significance of bead representations, and their relation to our work.

Another precursor to our project is Hô's study of factorization homology and its relations to configuration spaces \cite{Ho20}. On the one hand rational Hochschild homology is known to be a special case of factorization homology, and on the other hand Hô showed that many variants of configuration spaces arise as factorization homology with suitable coefficients \cite[Proposition 5.1.9]{Ho20}. Essentially our Theorem \ref{thm:hochschild and configurations} is a special case of his work, but for the benefit of the reader we include our own proof.

\subsection{Acknowledgments}
We are grateful to Greg Arone, Dan Petersen, Orsola Tommasi and Victor Turchin for helpful conversations related to this work. We further thank Andrea Bianchi, Guillaume Laplante-Anfossi, Geoffrey Powell and Christine Vespa for their feedback on a previous version of the paper.
The second author benefited from the guidance of Dan Petersen as his Ph.D. supervisor. We thank ICERM and Brown University for generously providing the computing resources on which we ran some of our programs.
N.G. is supported by NSF Grant No. DMS-1902762; L.H. is supported by ERC-2017-STG 759082.

\section{Preliminaries}
\subsection{Polynomial functors on \texorpdfstring{$\Vect{\Q}$}{VectQ}} \label{sec:prelim polynomial functors}

Eilenberg and MacLane introduced in \cite{EM54} the notion of polynomiality for functors between categories of modules, defined using the notion of \emph{cross-effect functors}.
In the present work we use functors $\Vect{\Q}\to\Vect{\Q}$, sending (finite-dimensional) rational vector spaces to rational vector spaces. In this specific case the category of polynomial functors is semi-simple and the general theory of polynomial functors admits a simpler presentation,  as recalled below. For additional details we refer to Joyal \cite{Joyal86} and Macdonald \cite[Appendix I.A]{MacDonald}.
The end of this section includes a brief discussion of the notion of polynomiality for functors $\bm{grp} \to \Vect{\Q}$, from finitely generated free groups to rational vector spaces, as considered in \cite{DV15}.

\begin{defi}[\textbf{Symmetric sequences}]
    A \emph{symmetric sequence} (of $\Q$-vector spaces) is a sequence of vector spaces $(A_m)_{m=0}^\infty$ such that $A_m$ is equipped with a linear action of the symmetric group $\SymGroup[m]$. The sequence is \emph{finitely supported} if $A_m=0$ for all $m\gg 0$; denoted as a finite sequence $(A_m)_{m=0}^n$ for some $n\geq 0$.
\end{defi}

\begin{prop}[\textbf{Polynomial functors}{, \cite[Appendix I.A]{MacDonald}}]\label{prop:polynomiality-Qmod}
    Fix $n\geq 0$ and let $(A_m)_{m=0}^n$ be a finitely supported symmetric sequence. A functor $\Psi\colon \Vect{\Q}\to\Vect{\Q}$ of the form 
    \begin{equation} \label{eq:polynomial definition}
        \Psi(V) = \bigoplus_{m=0}^n{A_m\otimes_{\SymGroup[m]}{V^{\otimes m}}}
    \end{equation}
    is \emph{polynomial of degree $\leq n$} in the sense of Eilenberg--MacLane. Moreover, every polynomial functor $\Psi: \Vect{\Q}\to \Vect{\Q}$ is isomorphic to one obtained by the above construction, for representations $A_m$ determined uniquely by $\Psi$ up to isomorphism.
\end{prop}
 In the above proposition, the representation $A_m$ is called the $m$-th \emph{coefficient} of the polynomial functor $\Psi$. The largest $m$ for which $A_m \neq 0$ is the \emph{degree} of $\Psi$.
 \begin{defi}
    Analogously, for a general symmetric sequence $(A_m)_{m= 0}^{\infty}$, the functor
    \[
        \Psi(V) = \bigoplus_{m= 0}^\infty{A_m\otimes_{\SymGroup[m]}{V^{\otimes m}}}
    \]
    is called \emph{analytic}.
\end{defi}

Famously, analytic functors $\Vect{\Q}\to\Vect{\Q}$ contain exactly the data of their sequence of coefficients (see e.g. \cite[Appendix I.A]{MacDonald}). Stated precisely,

\begin{prop}\label{prop:equiv symmetric sequence analytic functors}
    {The construction $(A_0,\ldots, A_m,\ldots)\mapsto \Psi$ defines an equivalence of categories between the category of symmetric sequences and the category of analytic functors. It further restricts to  an equivalence between the subcategory of finitely supported symmetric sequences and the subcategory of polynomial functors.}

The inverse construction is given by sending $\Psi$ to $(A_0,A_1,\ldots)$ where $A_n$ is the `multi-linear part'
\[
A_n = \Psi( \Q^n  ) ^{(1,\ldots,1)}
\]
on which diagonal $(n\times n)$-matrices 
act with weight $(1,\ldots,1)$.
\end{prop}

\begin{rmk}
    In \cite{MacDonald} Macdonald gives a slightly different definition of polynomial functors than Eilenberg--Maclane's, but this difference is immaterial in our context.
    In light of the previous proposition, one may take functors of the form \eqref{eq:polynomial definition} as a definition of polynomial functors $\Vect{\Q}\to\Vect{\Q}$. However, when $\Vect{\Q}$ is replaced with a different category of modules, there do exist polynomial functors that are not characterized by a symmetric sequence of coefficients. In that case, functors as in \eqref{eq:polynomial definition} define a proper subcategory of polynomial functors.
\end{rmk}

Given two polynomial functors $\Psi_1$ and $\Psi_2$ of respective degrees $\deg \Psi_1 = d_1$ and $\deg \Psi_2 = d_2$, their sum $\Phi\oplus\Psi$, product $\Phi\otimes\Psi$, and composition $\Phi\circ\Psi$ are also polynomial functors. Moreover
\begin{equation*}
    \deg (\Psi_1\oplus \Psi_2) = \max (d_1, d_2) \quad
    \deg (\Psi_1\otimes \Psi_2) = d_1+d_2 \quad
    \deg (\Psi_1\circ \Psi_2) = d_1 d_2.
\end{equation*}

Proposition \ref{prop:equiv symmetric sequence analytic functors} implies that polynomial functors form a semi-simple abelian category, and the irreducible objects are the so called \emph{Schur functors}
\begin{equation}\label{eq:defi-Schur-functors}
    \Schur{\lambda}: V \longmapsto \Specht{\lambda} \otimes_{\SymGroup[m]} V^{\otimes m}
\end{equation}
where $\Specht{\lambda}$ is the irreducible $\SymGroup[m]$-representation corresponding to the partition $\lambda\vdash m$. Most familiar are the symmetric and alternating powers $\Schur{(m)}(V) = \Sym^m(V)$ and $\Schur{(1^m)}(V) = \Lambda^m(V)$.
Recall that the space $\Schur{\lambda}(V)$ is non-trivial if and only if the number of parts in $\lambda$ does not exceed $\dim V$, and in that case the resulting $\GL(V)$-representations for different $\lambda$ are all distinct.

More generally, if $\Psi$ is a polynomial functor and its coefficients decompose as $ A_m \cong \bigoplus_{\lambda\vdash m}A_\lambda \otimes\Specht{\lambda}$, i.e. the $\Specht{\lambda}$-multiplicity space is $A_\lambda$, then 
\[
\Psi(-) \cong \bigoplus_{\lambda} A_\lambda\otimes \Schur{\lambda}(-)
\]
an irreducible decomposition of $\Psi$ into Schur functors, with sum over all partitions $\lambda$.

In this paper we will actually not work with the category $\Vect{\Q}$ but with the category $\GradedVect_{\Q}$ of \emph{graded} rational vector spaces, with the symmetry of $\otimes$ obeying the Koszul sign rule; everything said until now applies verbatim to this situation. The coefficients of polynomial functors $\GradedVect_{\Q}\to\GradedVect_{\Q}$ can be determined by considering graded vector spaces $V$ of finite type, concentrated in a single degree:

\begin{prop}\label{prop:image-polynomial-concentrated}
Let $V = \Q^d[-i]$ be a graded vector space of rank $d$ concentrated in degree $i$, and let $\Psi$ be a polynomial functor with coefficients $A_m \cong \bigoplus_{\lambda \vdash m}{A_{\lambda}\otimes \Specht{\lambda}}$ where $A_\lambda$ are graded vector spaces. Then
\[
    \Psi(V) = \begin{cases}
        \bigoplus_{m\geq 0}\bigoplus_{\lambda\vdash m}{A_{\lambda}\otimes \Schur{\lambda}(\Q^d)[-mi]} & \textrm{if $i$ is even}  \\
        \bigoplus_{m\geq 0} \bigoplus_{\lambda\vdash m}{A_{\TransposePartition{\lambda}}\otimes \Schur{\lambda}(\Q^d)[-mi]} & \textrm{if $i$ is odd}
    \end{cases}
\]
where the sum runs over all partitions and
$\TransposePartition{\lambda}$ denotes the conjugate partition of $\lambda$.
\end{prop}

In other words, when $V$ is concentrated in even degree, the value $\Psi(V)$ as a $\GL(V)$-representation determines the coefficients of $\Psi$ with at most $\dim(V)$ parts. On the other hand when $V$ is concentrated in odd degree, the $\GL(V)$-representation $\Psi(V)$ determines the coefficients with all parts having size at most $\dim(V)$.

\subsubsection{Polynomial functors from free groups}\label{sec:polynomiality-gr}
We also consider the notion of polynomiality for functors out of the category $\bm{grp}$ of \emph{finitely generated free groups}. The relevant variant we need is that of contravariant functors  $\Psi\colon \bm{grp}^{op}\to \Vect{\Q}$, however since this notion is not the main focus of this paper, we will only sketch the general idea and refer the interested reader to \cite[\S 3.2]{HPV15} for a detailed presentation.

The category $\bm{grp}$ consists of free groups $F_d \cong \langle a_1,\ldots, a_d\rangle$ for every nonnegative integer $d$ and group homomorphisms. The free product $F_n\ast F_m \cong F_{n+m}$ and the trivial group $F_0$ endow this category with the structure of a pointed monoidal category; and \cite[\S 3]{HPV15} define contravariant polynomial functors of degree $n-1$ on $\bm{grp}$ to be ones for which the $n$-th \emph{cross effect} functor
\[
\operatorname{cr}_n\Psi(F_{g_1},\ldots,F_{g_n}) := \ker\left( \Psi(F_{g_1}*\cdots*F_{g_n} ) \to \bigoplus_{i=1}^n \Psi(F_{g_1}*\cdots*\widehat{F_{g_i}}*\cdots*F_{g_n}) \right)
\]
induced by the morphisms $1\to F_{g_i}$ vanish identically.

\begin{rmk}
    The reference \cite[\S 3]{HPV15}, as well as a large part of the literature, focuses more on \emph{covariant} polynomial functors. While the categories of covariant and contravariant polynomial functors on $\bm{grp}$ are not equivalent, the subcategories of functors taking finite-dimensional values are related via linear duality, see \cite[\S 7]{Pow21} for details. 
    In particular, since the polynomial functors that we will encounter take finite-dimensional values, we will allow ourselves to dualize statements about covariant polynomial functors. 
\end{rmk}

A major difference between polynomial functors $\bm{grp}^{op}\to \Vect{\Q} $ and  $\Vect{\Q}\to \Vect{\Q}$ is that the former category is not semi-simple, and the analogue of Proposition \ref{prop:polynomiality-Qmod} fails. Indeed, polynomial functors on $\bm{grp}$ have a more complicated structure, described next.

An important class of polynomial functors on $\bm{grp}$ is given by precomposing polynomial functors $\Vect{\Q}\to\Vect{\Q}$ by the rationalization $\iota\colon\bm{grp}\to \Vect{\Q}$
\[
F_d \overset{\mathfrak{ab}}\longmapsto \mathbb{Z}^d \overset{\otimes \Q}\longmapsto \Q^d
\]
and linear duality. In this way, every Schur functor $\Schur{\lambda}$ as defined in \eqref{eq:defi-Schur-functors} gives rise to a polynomial functor $\bm{grp}^{op}\to \Vect{\Q}$ of degree $|\lambda|$, denoted by $\iota^* \Schur{\lambda}$. As before, these functors still comprise the simple polynomial functors, however now they admit non-trivial extensions.

\begin{prop}[{Polynomial filtration, \cite[Corollaires 3.6, 3.7]{DV15}}] 
    Every polynomial functor $\Psi: \bm{grp}^{op}\to \Vect{\Q}$ admits a natural `polynomial' filtration by degree. 
    The associated graded functor splits as the direct sum of functors of the form $\iota^* \Schur{\lambda}$.
\end{prop}

Stated differently, every polynomial functor on $\bm{grp}$ is obtained as an iterated extension of functors $\iota^* \Schur{\lambda}$. These functors $\iota^* \Schur{\lambda}$ are basic examples of the following notion.

\begin{defi}[Outer functors]\label{def:outer-functor}
    A functor $\Psi: \bm{grp}^{op}\to\Vect{\Q}$ is said to be an \emph{outer functor} if it sends inner automorphisms to the identity map. That is, if for every free group $F_g$ the $\Aut(F_g)$-action on $\Psi(F_g)$ factors through $\Out(F_g) = \Aut(F_g)/\operatorname{Inn}(F_g)$ (recall that an automorphism is inner if it is the conjugation by a fixed element).

    An outer functor is \emph{polynomial} if it is both outer and polynomial in the sense discussed above.

\end{defi}

\subsection{Cohomology with compact support of configuration spaces}\label{sec:configurations with compact support}

Homology and cohomology in this paper will be taken with rational coefficients unless otherwise stated, and we will henceforth suppress the coefficients from the notation. That is, $H^*_c(-)$ will denote rational singular cohomology with compact support $H^*_c(-;\Q)$. 

Recall that $H_c^*(-)$ is related to homology and cohomology as follows. First, when $X$ is a locally compact space then $H_c^*(X)\cong \tilde H^*(X^+)$ for $X^+$ the one-point compactification, and when $X$ is an $n$-manifold Poincaré duality identifies $H^i_c(X) \cong H_{n-i}(X) \cong H^{n-i}(X)^{\vee}$ (the second isomorphism needs the homology to be finite-dimensional). Furthermore, $H^*_c(X)$ is itself the linear dual of \emph{Borel-Moore homology}, which we denote by $H^{BM}_*(X)$.

The configuration spaces $F(X, n)$ are famously not a homotopy invariant of the space $X$. For example the real line $\R$ is homotopy equivalent to a point, but $F(\{\ast\},2) = \emptyset$ while $F(\R,2) \neq \emptyset$; a highly nontrivial example of failure of homotopy invariance is found in \cite{LS05}. Also, a continuous map $f\colon X \to Y$ does not induce a map $F(X, n) \to F(Y, n)$ if $f$ is not injective and $n\geq 2$. We show however that at the level of cohomology with compact support homotopy invariance and functoriality do hold, at least for proper maps.

\begin{prop}
    Let $f\colon X\to Y$ be a proper map between locally compact Hausdorff spaces. For every positive integer $n$, the map $f$ induces a natural morphism in each degree $i$
    \[
        H_c^i(F(Y, n)) \to H_c^i(F(X, n)).
    \]
\end{prop}

\begin{proof}
    Since $f$ is proper, the $n$-fold cartesian product $f^{\times n}\colon X^n\to Y^n$ is also proper. The subset $U = (f^{\times n})^{-1}(F(Y, n))$ is open in $F(X, n)$ and the restriction of $f^{\times n}$ to $U$ is a proper map $f_n:= f^{\times n}|_U\colon U\to F(Y, n)$. We can therefore construct the morphism $H_c^i(F(Y, n)) \to H_c^i(F(X, n))$ as the composition
    \begin{equation}\label{eq:induced map on configurations}
        H_c^i(F(Y, n)) \overset{f_n^*}{\longrightarrow} H_c^i(U) \longrightarrow H_c^i(F(X, n))
    \end{equation}
    where the second map is extension by zero.
    
    Alternatively, $f$ induces a map $F(X,n)^+\to F(Y,n)^+$ between the one-point compactifications, where any collisions in $Y$ are sent to $\infty$. The induced map on cohomology is, with the identification $H^*_c(F(X,n)) \cong \tilde{H}^*(F(X,n)^+)$, the composition in \eqref{eq:induced map on configurations}.
\end{proof}

With this alternative definition it is easy to see that if $f,g:X\to Y$ are homotopic through a \emph{proper} homotopy $H:X\times I \to Y$, then they induce equal maps on $H^*_c(F(-,n))$.


    
    

\begin{cor}\label{cor:proper-homotopy-invariance}
    A proper homotopy equivalence $f\colon X\to Y$ induces an isomorphism $H_c^*(F(Y, n)) \cong H_c^*(F(X, n))$ for each $n$. In particular, if $X$ and $Y$ are compact, the same follows for any homotopy equivalence.
\end{cor}

A special case of this corollary is that for a finite connected graph $G$, the groups $H_c^*(F(G, n))$ only depend on the first Betti number (or loop order) of $G$.

Another implication of functoriality and homotopy invariance is the following.
{\begin{cor}\label{action-monoid-self-maps}
    Let $X$ be a compact space and $n$ a positive integer. The monoid $\EndSpace{X}$ of continuous self maps acts on $H_c^{\ast}(F(X, n))$, and this action factors through its monoid of connected components $[X,X] = \pi_0(\EndSpace{X})$.
\end{cor}}

\section{Geometry of Hochschild homology}
\label{sec:hochschild}
This section highlights the geometric description of Hochschild--Pirashvili cohomology as compactly supported cohomology of a configuration space. This presentation highlights the fact that the cohomology acquires an additional $\Lie$-module structure, discussed in \S\ref{sec:lie structure}. Readers interested only in our new calculations and results can safely skip this section.

Let us first briefly explain the notation used below. A more detailed introduction to Hochschild--Pirashvili cohomology can be found in \cite{Pir00}.
Throughout this section $\kk$ is any commutative ring.

\begin{rmk}[Basepoints]
If $X$ is a (simplicial) set, we will denote by $X_{+}$ the pointed (simplicial) set obtained by adjoining to $X$ a disjoint basepoint. Let $\Fin_{\ast}$ be the category of finite pointed sets, a skeleton of which is given by the sets $\FinPointedSet{n} = \{\ast, 1,\ldots, n\}$ for all $n\in \N$.
\end{rmk}

Let $A$ be an augmented commutative $\kk$-algebra, with augmentation morphism $\epsilon\colon A\to\kk$. Define a covariant functor $A_{\bullet}\colon \Fin_{\ast}\to \operatorname{Alg}_{\kk}$, sending $\FinPointedSet{n}$ to $A^{\otimes n}$ and the morphism $\alpha\colon \FinPointedSet{m}\to \FinPointedSet{n}$ to the morphism $\alpha_{\ast}\colon A^{\otimes m}\to A^{\otimes n}$ defined on pure tensors by
\[
\alpha_{\ast}(a_1\otimes\ldots\otimes a_m) = \Big(\bigotimes_{j=1}^n{\prod_{i\in \alpha^{-1}(j)} a_i}\Big)\cdot \epsilon\Big(\prod_{i\in\alpha^{-1}(*)}{a_i}\Big),
\]
with the convention that $\prod_{i\in \emptyset}a_i = 1$ and $a_* = 1$. More conceptually, $A_{\bullet}$ is the unique coproduct-preserving functor $\Fin_{\ast}\to\operatorname{Alg}_{\kk}$ sending $\FinPointedSet{1}$ to $A$ and the map $\FinPointedSet{1}\to\FinPointedSet{0}$ to the augmentation map $\epsilon\colon A\to\kk$.

\begin{defi}[Hochschild--Pirashvili homology]\label{def:HP-homology}
    Let $X\colon \Delta^{op}\to \Fin_{\ast}$ be a pointed simplicial set with finitely many $p$-simplices for all $p$ and let $A$ be an augmented commutative $\kk$-algebra. The composition $A_{\bullet}\circ X$ produces a simplicial $\kk$-module, and its associated chain complex is denoted $CH_{\ast}(X; A)$. Define the \emph{Hochschild--Pirashvili homology of $X$ with coefficients in $A$} to be the homology of this chain complex, denoted $HH_{\ast}(X; A)$.
\end{defi}

\begin{rmk}\label{rmk:Kan-extension-A-bullet}
    The previous definition extends to general simplicial sets $X$ by considering the colimit of $CH_{\ast}(X';A)$ over all finite simplicial subsets $X' \subseteq X$, i.e. the left Kan extension. However, this extension will not be needed in our applications below.
\end{rmk}

When working instead with $C$, a coaugmented cocommutative coalgebra, dualizing the previous definition gives $HH^{\ast}(X; C)$, the \emph{Hochschild--Pirashvili cohomology of $X$ with coefficients in $C$}. Explicitly, one defines a contravariant functor $C^\bullet:\Fin_*^{op} \to \operatorname{Mod}_{\kk}$ (in fact taking values in coalgebras), and the composition $C^\bullet \circ X$ is a cosimplicial $\kk$-module. The associated cochain complex $CH^*(X;C)$ has cohomology $HH^{\ast}(X; C)$ by definition.

\begin{rmk}[Loday construction]
    Definition \ref{def:HP-homology} is a special case of a more general construction, taking a pair $(A, M)$ of a commutative $\kk$-algebra $A$ and a module $M$ over $A$ and producing a covariant functor $\Loday{A}{M}\colon \Fin_*\to \operatorname{Mod}_{\kk}$ (see \cite[\S 1.7]{Pir00}). Then $HH_{\ast}(X; \Loday{A}{M})$ is again defined as the homology of the chain complex associated to the simplicial $\kk$-module $\Loday{A}{M}\circ X$. This construction has the property that $HH_{\ast}(S^1; \Loday{A}{M})\cong HH_{\ast}(A, M)$, the classical Hochschild homology. The special case we consider in this paper is related to those in \cite{TW19} and \cite{PV18} via the following identification.
    


    For $X$ a pointed simplicial set, $A$ an augmented $\kk$-algebra and $C$ a coaugmented coalgebra, there exist isomorphisms (compare \cite[Lemma 13.11]{PV18})
    \begin{equation}
        CH_{\ast}(X_+; A) = CH_{\ast}(X_+; \Loday{A}{\kk}) \cong CH_{\ast}(X; \Loday{A}{A}),
    \end{equation}
    \begin{equation}
        CH^{\ast}(X_+; C) = CH^{\ast}(X_+; \Loday{C}{\kk}) \cong CH^{\ast}(X; \Loday{C}{C}),
    \end{equation}
    natural in $X$, $A$ and $C$, and thus the Hochschild (co)homologies are the same.
\end{rmk}

To state the main claim of this section we introduce some terminology. Let $V$ be a free $\kk$-module of finite rank, then the \emph{square-zero} algebra $A_V := \kk \oplus V$ is the unital $\kk$-algebra with trivial multiplication on $V$. Dually, define $A^V := \kk \oplus V^{\vee}$ the coalgebra cogenerated by primitive elements $V^\vee$, and note that $A^V \cong (A_V)^\vee$.
The central goal of the section is the following.

\begin{prop}\label{prop:geometry of HH}
Let $X$ be a simplicial set with finitely many simplices in every degree, and for every $n\geq 0$ consider the coalgebra $A^{\Q^n}\cong \Q \oplus \Q\epsilon_1\oplus \ldots \oplus\Q\epsilon_n$ cogenerated by the primitive cogenerators $\epsilon_1,\ldots, \epsilon_n$. There is an $\SymGroup$-equivariant isomorphism
\begin{equation}\label{eq:iso-Hochschild-compact-support}
    HH^{\ast}(X_+; A^{\Q^n})^{(1,\ldots, 1)} \cong H_c^{\ast}(F(|X|, n))
\end{equation}
where the superscript $(1,\ldots,1)$ denotes the multi-graded summand of Hochschild--Pirashvili cohomology that is multi-linear in the $\epsilon_i$'s (see Proposition \ref{prop:equiv symmetric sequence analytic functors}).
\end{prop}

\begin{rmk}
For a general simplicial set $X$ there is a homotopy invariant reformulation as
\[
HH^{\ast}(X_+; A^{\Q^n})^{(1,\ldots, 1)} \cong \tilde{H}^{\ast}(|X|^n/\Delta_n(|X|)).
\]
This becomes expression \eqref{eq:iso-Hochschild-compact-support} involving configuration spaces when $|X|$ is compact.
\end{rmk}


\begin{rmk}
    An equivalent formulation of Proposition \ref{prop:geometry of HH} is the following: Consider the symmetric sequence given by $n \mapsto H^{*}_c\left( F(|X|,n)\right)$. Then the corresponding functor
    \begin{equation}\label{eq:Hochschild functor}
    V \mapsto \prod_{n\geq 0} H^{*}_c\left( F(|X|,n)\right)\otimes_{\SymGroup} (V^\vee)^{\otimes n}
    \end{equation}
    is naturally isomorphic to Hochschild--Pirashvili cohomology $V \mapsto HH^*(X_+; {A^V})$. In particular the above functor is the linear dual of the functor at the subject of Powell--Vespa's \cite{PV18}. Hence the close relation between our work and theirs.
\end{rmk}

\subsection{Labeled configuration spaces}
We proceed by constructing a simplicial set whose simplicial homology with $\kk$-coefficients is the Hochschild--Pirashvili homology. Let $\operatorname{ComMon}(\Set_{\ast})$ be the category of commutative monoids in the category of pointed sets, i.e. $M\in \operatorname{ComMon}(\Set_{\ast})$ is a pointed set equipped with a unit $1\in M$ and a product $M\wedge M \to M$ satisfying the usual axioms of commutative monoids (here $\wedge$ is the smash product). Note that $M$ has a canonical augmentation $\epsilon: M\to \{*,1\}$, sending every non-invertible element to the basepoint.

Given a monoid $M\in \operatorname{ComMon}(\Set_{\ast})$, its \emph{Loday construction} is defined analogously to Definition \ref{def:HP-homology}. This is the covariant functor $M_{\bullet}\colon \Fin_{\ast} \to \operatorname{ComMon}(\Set_{\ast})$, defined on objects as $\FinPointedSet{n} \mapsto M^{\wedge n}$.

\begin{defi}[Hochschild simplicial set]\label{def:Loday-pointed-sets}
    Given $M\in \operatorname{ComMon}(\Set_{\ast})$ as above, and $X$ a pointed simplicial set with finitely many $p$-simplices for every $p$, the \emph{Hochschild simplicial set} is defined as the composition $M_{\bullet} \circ X$.
    
    As in Remark \ref{rmk:Kan-extension-A-bullet}, this definition extends to general pointed $X$ by considering the colimit over finite simplicial subsets $X'\subseteq X$.
\end{defi}

Given a pointed set $S$, let the reduced $\kk$-module spanned by $S$ be $\tilde{\kk}[S]:= \kk[S]/\kk[*]$ where $*\in S$ is the basepoint. Then, $\tilde{\kk}[S\vee T] \cong \tilde{\kk}[S]\oplus \tilde{\kk}[T]$ and $\tilde{\kk}[S\wedge T]\cong \tilde{\kk}[S]\otimes \tilde{\kk}[T]$. It is thus immediate that $\tilde{\kk}[-]$ takes monoids in pointed sets to augmented $\kk$-algebras. Furthermore, applying $\tilde{\kk}[-]$ to a pointed simplicial set $X$ results in a simplicial $\kk$-module, which under the Dold-Kan correspondence coincides with the reduced simplicial chain complex $\tilde{C}_*(X)$. These constructions are all compatible with $\vee$ and $\wedge$ of pointed simplicial sets.

\begin{rmk}\label{rmk:simplicial-set-Hochschild}
    Due to the strong monoidality of $\tilde{\kk}[-]$, it follows that the complex of reduced simplicial $\kk$-chains of the Hochschild simplicial set $M_{\bullet}\circ X$ is isomorphic to the Hochschild--Pirashvili chain complex $CH_*(X; \tilde{\kk}[M])$.
\end{rmk}

In the rest of this subsection we introduce the notion of the \emph{configuration space of $X$ with labels in $M$} and prove that this simplicial set is isomorphic to the Hochschild simplicial set. 

For every simplicial set $X$, the Yoneda embedding provides a contravariant functor $X^{\bullet}\colon \Fin_{\ast}^{op}\to \operatorname{sSet}_{\ast}$, defined as the pointwise hom-functor $\operatorname{Map}_{\ast}(-, X)$, sending $\FinPointedSet{n}$ to the pointed simplicial set $\operatorname{Map}_{\ast}(\FinPointedSet{n}, X)\cong X^{\times n}$ whose basepoint is the constant pointed map.

\begin{defi}
    Given $M\in \operatorname{ComMon}(\Set_{\ast})$ and a pointed simplicial set $X$, the \emph{configuration space of $X$ with labels in $M$} is the pointed simplicial set defined as the following coend
    \begin{equation}
        \ConfLabeled{X}{M} := \left. \left(\bigvee_{n\geq 0} X^{\times n} \wedge M^{\wedge n}\right) \middle /\sim \right.
\end{equation}
where $\sim$ is the equivalence relation
\[
    (\alpha^{\ast}(x_1, \ldots, x_n), (s_1, \ldots, s_m))\sim((x_1,\ldots, x_n), \alpha_{\ast}(s_1,\ldots, s_m))
\]
for all pointed maps $\alpha\colon \FinPointedSet{m}\to\FinPointedSet{n}$.
\end{defi}
Note in particular that if $\alpha$ is a bijection of $\FinPointedSet{n}$, then $\alpha^{\ast}$ and $\alpha_{\ast}$ act on $X^{\times n}$ and $M^{\wedge n}$ by mutually inverse permutations of the coefficients.

One can think of a point in the labeled configuration space as a tuple of points in $X$, each of which decorated by an element of $M$. When two such labeled points collide in $X$, one replaces them by a single point labeled by the product of the two original labels. Points labeled by the unit $1\in M$ can be introduced or deleted freely.

\begin{prop}\label{prop:coend iso with Loday}
    Let $X$ be a pointed simplicial set 
    and let $M$ be a commutative monoid in pointed sets. Then the Hochschild simplicial set $M_{\bullet}\circ X$ 
    is naturally isomorphic to the labeled configuration space $\ConfLabeled{X}{M}$.
\end{prop}

\begin{proof}
    When $X$ has finitely many nondegenerate simplices, this is essentially the co-Yoneda lemma enriched in simplicial sets. Indeed, at the level of $p$-simplices the co-Yoneda lemma precisely gives a natural isomorphism 
    \begin{equation*}
        \ConfLabeled{X_p}{M} = \operatorname{Map}_{*}(-,X_p)\otimes_{\Fin_*} M_{\bullet} = M_{\bullet}(X_p)
    \end{equation*}
    and by naturality these isomorphisms form an isomorphism of simplicial sets. 
    
    When $X$ is any simplicial set, it is the filtered colimit over its finite simplicial subsets. Since maps from finite sets commute with filtered colimits, and $\otimes_{\Fin_*}$ commutes with all colimits, the same co-Yoneda argument extends to $X$. 
\end{proof}

Combining this proposition with Remark \ref{rmk:simplicial-set-Hochschild}, one obtains the following.

\begin{cor}
    Let $X$ be a pointed simplicial set and let $M$ be a commutative monoid in pointed sets. Then the Hochschild chain complex $CH_*( X; \tilde{\kk}[M] )$ 
    is isomorphic to the reduced simplicial chain complex of the labeled configuration space $\ConfLabeled{X}{M}$ with $\kk$-coefficients.
    Thus, the Hochschild--Pirashvili homology is the homology of the labeled configuration space.
\end{cor}

The motivating example for this discussion is the subject of the next section.

\subsection{Configuration spaces of distinct points} \label{sec:labeled configs}
Let $T$ be a finite set. The \emph{square-zero monoid in pointed sets} generated by $T$ is the pointed set $M[T]:= \{0,1\} \coprod T$, with basepoint $0$, unit $1$ and otherwise trivial multiplication: $s \cdot s' = 0$ for all $s, s'\in S$. 

\begin{claim}\label{geometric-realization-labeled}
Let $X_+$ be a simplicial set with a disjoint basepoint and finitely many non-degenerate simplices. Given a finite set $T$, the configuration space with labels in the square-zero monoid $M[T]$ has geometric realization 
\begin{equation*}
|M[T]_{\bullet}\circ (X_+)| \cong  \bigvee_{n\geq 0} \left(F(|X|,n)\times_{\SymGroup[n]} T^n   \right)^+
\end{equation*}
where $Y^+$ is the one-point compactification of a topological space $Y$. That is, the resulting labeled configuration space is the wedge sum of compactified configuration spaces of particles in $|X|$ with decorations in $T$.
\end{claim}

\begin{proof}

First, we prove that there is an isomorphism of simplicial sets 
\[
    M[T]_{\bullet}\circ (X_+) \cong \bigvee_{n\geq 0}{\Big(X^n/{\Delta_n(X)}\Big) \wedge_{\SymGroup}(T^n)_+}
\]
where $\Delta_n(X) \subseteq X^n$ is the \emph{fat diagonal} -- the simplicial subset whose $p$-simplices are the $n$-tuples of elements in $X_p$ not all of whose coordinates are distinct; equivalently its $p$-simplices are the non-injective functions from $\bm{n}$ to $X_p$.

On the left-hand side, $p$-simplices are described as follows. There is a natural bijection $M[T]_{\bullet}((X_p)_+) \cong \left((\{1\}\coprod T)^{X_p}\right)_+$, further simplified by
\begin{equation}\label{eq:from functor to configurations}
\big(\{1\}\coprod T\big)^{X_p} \cong \coprod_{Y\subseteq X_p} T^Y \cong \coprod_{n\geq 0}\operatorname{Inj}(\bm{n},X_p) \times_{\SymGroup} T^n,
\end{equation}
where $\operatorname{Inj}(\bm{n},X_p)$ denotes the set of injective functions from $\bm{n}$ to $X_p$. Remembering the basepoint, we obtain the natural bijection
\[
    M[T]_{\bullet}((X_p)_+) \cong \bigvee_{n\geq 0}\operatorname{Inj}(\bm{n},X_p)_+\wedge_{\SymGroup}(T^n)_+ \cong \bigvee_{n\geq 0}\Big(X_p^n/{\Delta_n(X_p)}\Big)\wedge_{\SymGroup}(T^n)_+,
\]
where the last isomorphism uses the obvious identification $X_p^n/\Delta_n(X_p) \cong \operatorname{Inj}(\bm{n},X_p)_+$
which sends noninjective functions in $X_p^n$ to the basepoint. It remains to verify that the simplicial maps on both sides commute with these bijections. Crucially, this uses the fact that the multiplication on $T$ is trivial.


Face and degeneracy maps $\alpha: X_p\to X_q$ act on $(\{1\}\coprod T)^{X_p}$ by multiplying the labels of points in every fiber (with the empty product being $1$). But since the product of two elements from $T$ is trivial, a function $\varphi\in (\{1\}\coprod T)^{X_p}$ is sent to the basepoint unless every fiber of $\alpha$ contains at most one element with label in $T$ and every other element is labeled by the unit $1$, in which case it is sent to the composition 
\[
\alpha(\varphi^{-1}(T)) \overset{\alpha^{-1}}{\longrightarrow} \varphi^{-1}(T) \overset{\varphi}\longrightarrow T
\]
extended by $1$ when this is undefined. On the RHS of \eqref{eq:from functor to configurations} the function $\varphi: \varphi^{-1}(T)\to T$ represents the element $((x_1,\varphi(x_1)),\ldots, (x_n,\varphi(x_n)))$ for some enumeration of $\varphi^{-1}(T)$. This tuple is sent under $\alpha$ to $((\alpha(x_1),\varphi(x_1)),\ldots, (\alpha(x_n),\varphi(x_n)))$, which is easily seen to be the tuple corresponding to $\alpha_*(\varphi)$ described above. 



Since geometric realization commutes with wedge sums, Cartesian products and quotients, one obtains a homeomorphism
\[
    |M[T]_{\bullet}\circ(X_+)| \cong \bigvee_{n\geq 0}{\Big(|X|^n/\Delta_n(|X|)\Big)\wedge_{\SymGroup}(T^n)_+},
\]
where one observes that $\Delta_n(|X|) = |\Delta_n(X)|\subseteq |X|^n$ is the topological \emph{fat diagonal}, in which at least two coordinates are equal.

Up to this point, the analysis applies to any simplicial set. When $|X|$ is compact (equivalently, $X$ has finitely many non-degenerate simplices), $|X|^n/\Delta_n(|X|)$ is homeomorphic to the one-point compactification $F(|X|, n)^+$, inducing the claimed homeomorphism.
\end{proof}

    

Every labeled configuration space is naturally filtered by number of points in a configuration. In the case of the last claim the filtration actually splits, and the resulting graded factors are the compactified configuration spaces with a fixed number of points.
This grading is refined further to a multi-grading by listing the multiplicities of every label $s\in T$. E.g. if $T = \{\text{red},\text{blue}\}$, then the component with multi-degree $(n_r,n_b)$ is the configuration space of $n_r$ red points and $n_b$ blue points, all of which distinct, compactified by one point at $\infty$.

\begin{cor}
For $T = \{1,\ldots,n\}$, the $(1,\ldots,1)$ multi-degree component of the realisation $|M[T]_{\bullet}\circ(X_+)|$ is the one-point compactification of the ordinary ordered configuration space of $n$ distinct points in $|X|$. The natural $\SymGroup[n]$-action on $T$ by permutations agrees with the usual action on the configuration space by relabeling points.
\end{cor}

\begin{rmk}
From this point on we will abuse notation and use $X$ to refer both to the simplicial set and its geometric realization. The notation $F(X,n)^+$ for compactified configuration spaces can be understood either as a topological space or as the simplicial set $X^n/\Delta_n(X)$, so that $|F(X,n)^+|\cong F(|X|,n)^+$ when $X$ has finitely many non-degenerate simplices. 
Since simplicial and singular homology agree, the ambiguity in notation is immaterial.
\end{rmk}

Note that linearizing $M[T]$ results in an augmented square-zero $\kk$-algebra in the usual sense: $\tilde{\kk}[M[T]] \cong \kk\cdot 1 \oplus \kk[T]$ with trivial multiplication on $\kk[T]$, denoted by $A_T$. Consider the Hochschild chain complex with coefficient in this square-zero algebra, 

\begin{cor}
Let $X$ be a simplicial set with finitely many non-degenerate simplices. For $T=\{1,\ldots,n\}$ and $A_T = \kk\oplus \kk[T]$ the square-zero $\kk$-algebra generated by $T$, the Hochschild chain complex $CH_\ast(X_+, A_T ) $ is naturally multi-graded by the multiplicity of each $s\in T$, so that the $(1,\ldots,1)$ multi-degree component is equivalent to the reduced chains on the one-point compactification
\begin{equation}\label{eq:chain level hochschild as configurations}
CH_*(X_+, A_T )^{(1,\ldots,1)} \simeq \tilde{C}_{*}\left( F(X,n)^+ ; \kk\right)
\end{equation}
compatibly with the $\SymGroup[n]$-action on both chain complexes.

Similarly, the $(d_1,\ldots,d_n)$ multi-degree component is naturally isomorphic to the reduced chains on the one-point compactification of the configuration space of $\sum d_i$ distinct points with exactly $d_i$ many having label $i\in T$.
\end{cor}

Taking homology on both sides gives a natural isomorphism
\begin{equation}\label{eq:iso-Hochschild-hom}
    HH_{\ast}(X_+; A_S)^{(1,\ldots,1)} \cong \tilde{H}_{\ast}(F(X, n)^+; \kk).
\end{equation}


\begin{cor}
With the notation of the previous corollary, and with $A^T:= \kk\cdot 1 \oplus \kk[T]^\vee$ the coalgebra dual to the square-zero algebra $A_T$,
there is a natural isomorphism
\begin{equation}\label{eq:iso-Hochschild-cohom}
    HH^*(X_+; A^T )^{(1,\ldots,1)} \cong H^{*}_c\left( F(X,n) ; \kk\right).
\end{equation}
\end{cor}

\begin{proof}
We dualize the quasi-isomorphism in \eqref{eq:chain level hochschild as configurations}. On the left-hand side, Hochschild chains dualize to cochains.  
On the right-hand side, one uses the identification between cohomology with compact support and reduced cohomology of the one-point compactification.
\end{proof}

Specializing to rational coefficients and in light of the equivalence between symmetric sequences and analytic functors, these isomorphisms can be restated as the following isomorphisms, natural in the vector space $V$:
\begin{eqnarray} \label{eq:HH lower star}
    HH_{\ast}(X_+; A_V) & \cong & \bigoplus_{n\geq 0}{\tilde{H}_{\ast}(F(X, n)^+; \Q)\otimes_{\SymGroup}V^{\otimes n}}\\ \label{eq:HH upper star}
    HH^{\ast}(X_+; A^V) & \cong & \prod_{n\geq 0}{H_c^{\ast}(F(X, n); \Q)\otimes_{\SymGroup}(V^\vee)^{\otimes n}}
\end{eqnarray}

These facts are the ones claimed in Proposition \ref{prop:geometry of HH}.

\subsection{Lie structure}\label{sec:lie structure}

The interpretation of Hochschild--Pirashvili homology as related to configuration space reveals additional structure, as explained to us by Victor Turchin. 
\begin{prop}
    Let $X$ be a compact CW complex. Then the symmetric sequence $H^*_c(F(X,\bullet))$ is naturally endowed with a right module structure over the suspended Lie operad $\OperSuspension\Lie$, that is a map $H^*_c(F(X,\bullet)) \circ \OperSuspension\Lie \to H^*_c(F(X,\bullet))$ satisfying the usual right-module axioms.
\end{prop}
The operadic suspension $\OperSuspension\Lie$ is defined by $\OperSuspension\Lie(n) \cong \sgn[n]\otimes\Lie(n)[1-n]$, so that an algebra over it is equivalent to a Lie algebra structure on the (de)suspension.

\begin{rmk}
Around the time a first version of this article was made public, Christine Vespa informed us that Geoffrey Powell recently studied (outer) polynomial functors of $\bm{grp}$ via right Lie-modules 
\cite{Pow21, Pow22, PowBabyBead}, and the corresponding right Lie structure is the same as described in the previous Proposition. Briefly, Powell uses Morita theory to construct an equivalence of categories between outer polynomial functors on free groups -- i.e. compatible sequences of $\Out(F_g)$-representations -- and representations of the PROP associated with $\Lie$ operad. The equivalence centers around the representations $H^*_c(F(X,n))$ for $X$ a wedge of circles.
See also \cite[\S 2.4-2.5]{Ves22}. 
\end{rmk}

Unpacking the definitions, this structure attaches a map
    \[
    m^\varphi: H^*_c(F(X,n-1)) \to H^{*+1}_c(F(X,n))
    \]
to every surjection $\varphi:[n]\twoheadrightarrow [n-1]$, that are compatible with the symmetric group actions in the obvious way and satisfy appropriate versions of anti-symmetry and Jacobi identity.

The existence of such a structure follows from Koszul duality of the commutative and Lie operads, and the fact that the $\Lie$-module $n\mapsto H^*_c(F(X,n))$ is Koszul dual to the $\operatorname{Com}$-module $n\mapsto H^*(X^n)$ (see \cite[Lemma 11.4]{arone-turchin}). But let us be more explicit -- the operations $m^\varphi$ on $H^*_c(F(X,n))$ are obtained as follows.

Let $X$ be a compact CW complex and denote the hypersurface 
\[
H_{ij} = \{ (x_1,\ldots,x_n)\in X^n \mid x_i=x_j \} \subseteq X^n
\]
for each $i\neq j$ so that the fat diagonal $\Delta_n(X) = \bigcup H_{ij}$. Any surjection $[n]\twoheadrightarrow [n-1]$ that sends $i$ and $j$ to the same image gives an identification $X^{n-1} \cong H_{ij} \subseteq X^n$ by pullback, and the image of the $(n-1)$-st fat diagonal $\Delta_{n-1}(X)\subset X^{n-1}$ can be identified as follows.

Decomposing the $n$-th fat diagonal as a union of two closed subspaces
\begin{equation}
    \Delta_n(X) = H_{ij} \cup C \;\text{ where }\; C = \bigcup_{\{k,l\}\neq \{i,j\}} H_{kl}
\end{equation}
the intersection $H_{ij}\cap C$ is precisely the image of $\Delta_{n-1}(X)$ in $H_{ij}$. In particular, there is an isomorphism in relative cohomology
\begin{equation}
    H^*_c(F(X,n-1)) \cong H^*(X^{n-1},\Delta_{n-1}(X)) \cong H^*(H_{ij},H_{ij}\cap C).
\end{equation}

Now by excision in cohomology
\begin{equation}
    H^*(H_{ij}, H_{ij}\cap C) \cong H^*(\Delta_n(X), C), 
\end{equation}
so the long exact sequence of the triple $(X^n\supset \Delta_n(X) \supset C)$ gives a coboundary map
\begin{equation}
    H^*(\Delta_n(X), C) \overset{d}\longrightarrow H^{*+1}(X^n,\Delta_n(X)) \cong H^{*+1}_c(F(X,n)).
\end{equation}
The composition of the above maps gives an operation
\begin{equation}
    H^*_c(F(X,n-1)) \cong H^*(X^{n-1},\Delta_{n-1}(X)) \to H^{*+1}_c(F(X,n)),
\end{equation}
well-defined for a given choice of surjection $[n]\twoheadrightarrow [n-1]$.

Under the identification of symmetric sequences and analytic functors $(\Phi[\bullet])\leftrightarrow \Psi$, a right $\OperSuspension\Lie$-module structure on a symmetric sequence $(\Phi[\bullet])$ becomes the map of analytic functors
\begin{equation}
    \Psi\left( \operatorname{FreeLie}(V) \right) \to \Psi(V)
\end{equation}
such that every Lie-bracket increases the grading by $+1$.

\begin{cor}
Let $X$ be a simplicial set with finitely many simplices in every degree. The Hochschild--Pirashvili cohomology functor with square-zero coefficients $HH^*(X;A^{\bullet})$ is endowed with the following natural structure:
\begin{equation}
HH^*(X;A^{\operatorname{FreeLie}(V)}) \to HH^*(X;A^V)
\end{equation}
graded such that if $\operatorname{FreeLie}^k(V)$ denotes the subspace of $k$-fold nested brackets then
\begin{equation}
    HH^i(X;A^{\operatorname{FreeLie}^k(V)}) \to HH^{i+k}(X;A^V).
\end{equation}
\end{cor}

If one is interested in Hochschild homology instead of cohomology, one need only dualize this $\Lie$ structure. More explicitly, the Hochschild--Pirashvili homology with square-zero coefficients carries a natural (shifted) right co$\Lie$-comodule structure.

\begin{rmk}[Geometric interpretation]
Thinking of Hochschild--Pirashvili cohomology $HH^*(X;A^V)$ as related to cohomology of configurations of points in $|X|$ with labels in $V$, the Lie-module structure above comes from the following geometric construction.

Suppose a configuration $\{x_1,\ldots,x_n\}\subseteq |X|$ with labels $\operatorname{FreeLie}(V)$ has the point $x_n$ labeled by the bracket $[v,w]$. The retraction $\varphi:[n+1]\twoheadrightarrow [n]$ with $\varphi(n+1)=\varphi(n) = n$ induces the map
\[
m^\varphi: H^*_c(F(X,n)) \to H^{*+1}_c(F(X,n+1))
\]
effectively splitting the $n$-th point in two, as described above. Label the points of this new configuration so that $x_n$ has label $v$, $x_{n+1}$ has label $w$, while all other points retain their original label.
This process can be repeated until there are no points labeled with brackets. The corollary above shows that the composition of these operations is independent of the order in which they were performed, and produces a well-defined cohomology class on configurations with labels in $V$.
\end{rmk}

\section{Geometric approaches to computation} \label{sec:geometric approach}
Now let $X$ be a finite wedge of spheres. Our approach to computations is centered around contrasting two distinct tools for computing $H^*_c(F(X,n))$, each providing complementary information. First we use the so called collision spectral sequence, obtained by filtering the cartesian power $X^n$ by its diagonals. This tool gives a handle on the polynomial structure of $H^*_c(F(X,n))$, but makes the symmetric group action by permutations less accessible.

Our second main tool, applying only when $X$ is a wedge of circles, is the cellular chain complex of the one-point compactification $F(X,n)^+$. This second tool gives a handle on the symmetric group action on $H^*_c(F(X,n))$, but obscures the polynomial structure somewhat.

Our approach lets us calculate an associated graded $\gr H^*_c(F(X,n))$ for all $n\leq 10$. For example, a complete tabulation of the composition factors of $H^9_c(F(X,10))$ can be found in Table \ref{table:final_answer10}. The output of all our calculations can be found by following \href{https://louishainaut.github.io/GH-ConfSpace/}{this URL}\footnote{\url{https://louishainaut.github.io/GH-ConfSpace/}}.

\subsection{Collisions and the CE-complex}\label{section:CE-complex}
We begin by considering the collision spectral sequence. This sequence has been used to study configuration spaces since the 80's, see e.g. \cite{CT78}, \cite{Kriz94}, \cite{Tot96}. More recently, Hô \cite{Ho17} and Petersen \cite{Pet20} recast the spectral sequence as the Chevalley--Eilenberg complex of a twisted Lie algebra. The same spectral sequence also appears in Powell--Vespa's \cite{PV18} as coming from the polynomial filtration of Hochschild--Pirashvili homology, referred to as the Hodge filtration (though this is not in the sense of Hodge theory).

The following construction first appeared in Getzler's \cite{Get99}.
Recall that a twisted dg Lie algebra is a symmetric sequence $\mathfrak{g} = (\mathfrak{g}(n))_{n\in \mathbb{N}}$ of chain complexes that is a Lie algebra object in the category of symmetric sequences, with $\otimes$ given by Day convolution: there is a bracket
\[
[-,-]: \Ind{\SymGroup[n]\times\SymGroup[m]}{\SymGroup[n+m]}{\mathfrak{g}(n)\otimes \mathfrak{g}(m)} \to \mathfrak{g}(n+m)
\]
that is $\SymGroup[n+m]$-equivariant and satisfies appropriate versions of anti-symmetry and Jacobi identity. Equivalently, $\mathfrak{g}$ is equipped with a left module structure over the Lie operad with respect to the composition product $\Lie \circ \mathfrak{g} \to \mathfrak{g}$.

To every such $\mathfrak{g}$ one associates the Chevalley--Eilenberg bi-complex
\[
C^{CE}_{-k}(\mathfrak{g}) := \Sym^k(\mathfrak{g}[1]),
\]
where we view $\mathfrak{g}[1]$ as a bigraded object, with horizontal grading $-1$ and vertical grading given by the internal cohomological grading of $\mathfrak{g}$.
The differentials of this bi-complex are $d$ and $\delta$, where $d$ is the extension of differential on $\mathfrak{g}$ to a signed derivation on tensors, and
\[
    \delta(sg_1 \otimes \ldots \otimes sg_k) = \sum_{i<j}{\epsilon_{ij} s[g_i, g_j]\otimes sg_1 \otimes \ldots \otimes \widehat{sg_i}\otimes \ldots \otimes \widehat{sg_j} \otimes \ldots\otimes sg_k},
\]
with $\epsilon_{ij}$ being the sign induced by the Koszul sign rule when moving $sg_i$ and $sg_j$ to the front. 
The bigrading of $\mathfrak{g}[1]$ induces a bigrading of the double complex $C^{CE}_*(\mathfrak{g})$. The sign $\epsilon_{ij}$ depends on both the horizontal and the vertical grading. 
Note that since we use cohomological grading while the CE complex computes homology, the horizontal grading is negative.

\begin{rmk}\label{rmk:arity-grading-and-filtration-bound}
    Since $\mathfrak{g}$ is a symmetric sequence, the \emph{Chevalley--Eilenberg homology} $H_*^{CE}(\mathfrak{g})$, defined as the cohomology of the total complex associated to $C_*^{CE}(\mathfrak{g})$, admits a further filtration by arity. Moreover both differentials $d$ and $\delta$ preserve the arity filtration, therefore the CE complex naturally splits by arity. With this, we note that if $\mathfrak{g}(0) = 0$ (as will be the case in our situation), then $C_{-k}^{CE}(\mathfrak{g})(n) \neq 0$ only for $-n\leq -k\leq -1$.
\end{rmk}

Further recall that if $A$ is any commutative dg algebra, then $A\otimes \mathfrak{g} = ( A\otimes \mathfrak{g}(n) )_{n\in \mathbb{N}}$ admits a twisted Lie bracket by extending $[-,-]$ in an $A$-bilinear manner.

The central input to our calculation is the following result.
\begin{prop}[\cite{Pet20}, Corollary 8.8]\label{prop:CE-homology is Conf}
Let $X$ be a paracompact and locally compact Hausdorff space, and let $A$ be a cdga model for the compactly supported cochains $C_c^*(X;\Q)$. Then there is a natural isomorphism of symmetric sequences in graded vector spaces
\[ \label{isomorphism_CE}
   {H_c^{*}(F(X, n), \Q)} \cong H_{*}^{CE}(A\otimes \Suspension\Lie)(n)
\]
where $\Lie$ is the Lie operad with its tautological Lie structure coming from operad multiplication, and $\Suspension\Lie$ is its suspension as a twisted Lie algebra. More explicitly,
\[
\Suspension\Lie(n) \cong \sgn[n] \otimes \Lie(n) [ -n ]
\]
where $\Lie(n)$ is the multi-linear part of the free Lie algebra on $(x_1,\ldots,x_n)$, with $|x_i| = 0$, and $V[-n]$ is the chain complex with $V$ placed in degree $n$.
\end{prop}


When $A$ models the cochains on a formal space, as is the case for wedges of spheres, the following simplification holds. Recall that a double complex is naturally filtered by its columns $\mathfrak{F}_p \left(\bigoplus_{s,t} C_{s,t}\right) = \bigoplus_{s\geq p, t} C_{s,t}$; we consider the resulting spectral sequence next.

Let $(C(X),d)$ be a functorial commutative cochain model for $C^*_c(X;\Q)$, where $X$ is any paracompact and locally compact Hausdorff space. Then the canonical filtration by columns of the bicomplex $C^{CE}_*(C(X)\otimes \Suspension\Lie)$ gives rise to a functorial spectral sequence converging to the symmetric sequence $n\mapsto {H^{*}_c(F(X,n);\Q)}$. 

\begin{lem}[\textbf{Formal spaces}, {compare \cite[Remark 4.6]{TW19}}]\label{lem:formal spaces}
When $X$ is a compact formal space, i.e. the cdgas $(C(X),d)$ and  $(H^*(X),0)$ are quasi-isomorphic, the spectral sequence induced by filtering $C^{CE}_*(C(X)\otimes \Suspension\Lie)$ by columns collapses at its $E^2$-page, and $E^\infty \cong E^2 \cong H^{CE}_*( H(X)\otimes \Suspension\Lie)$ functorially in $X$.
\end{lem}

\begin{note}
Even when $X$ is formal, there might exist nonformal maps $X\to X$, i.e. such that the map $C(X)\to C(X)$ is not equivalent to the induced $H(X)\to H(X)$. If not for those, one would simply replace the model $(C(X),d)$ by $(H(X),0)$ and conclude that the CE-bicomplex splits into the direct sum of its rows. However, nonformal maps are responsible for nontrivial extensions between those rows, as is already the case for $X = S^1\vee S^1 \vee S^1$ (see Remark \ref{rmk:extensions}).
\end{note}

\begin{proof}
Filtering the Chevalley--Eilenberg double complex by columns gives a spectral sequence with $E^0$ coinciding with $C^{CE}_*(C(X)\otimes \Suspension\Lie)$ but only with the vertical differentials of $C(X)$. Since $C^{CE}_*$ involves tensor powers and we are working over $\Q$, the Künneth formula gives a natural isomorphism
\begin{equation}\label{eq:chevalley-eilenberg}
E_1 \cong C^{CE}_*(H(C(X)\otimes \Suspension \Lie)) \cong C^{CE}_*(H(X)\otimes \Suspension \Lie)
\end{equation}
where the latter isomorphism follows from the fact that $(-)\otimes \Suspension\Lie$ is exact.

We claim that the spectral sequence collapses at its $E^2$-page. Indeed, since $X$ is compact and formal, the cohomology $H(X) = H^*(X;\Q)$ itself constitutes a commutative cochain model for $C_c^{\ast}(X)$ with vanishing differential. With this model the Chevalley--Eilenberg bicomplex computing $H^{*}_c(F(X,n))$ is exactly the $E^1$-page considered in the previous paragraph. But since the vertical differentials of the latter bicomplex are zero, its spectral sequence collapses at $E^2$. The fact that the spectral sequences for $C(X)$ and $H(X)$ both compute the same cohomology then forces all higher differentials of the former spectral sequence to also vanish.
\end{proof}

\begin{terminology}[\textbf{Collision filtration}]\label{terminology:collision}
The \emph{collision filtration} on $H^{*}_c(F(X,n))$ is the filtration induced by the following shift of the filtration $\mathfrak{F}$ by columns of the CE-bicomplex. For every $n\in \N$ let the $p$-th level of the collision filtration be the $(p-n)$-th filtration by columns
\[
F_p\, H_c^{i}(F(X,n)) \cong \mathfrak{F}_{p-n}\, H_{i+n}^{CE}(C(X)\otimes\Suspension\Lie)(n).
\]
\end{terminology}
The reasoning behind the name is that this filtration comes from filtering the pair 
$(X^n, \Delta^n(X))$ by the number of collisions in the fat diagonal (see \cite[\S3.7]{BG} for details in the dual setting of Borel--Moore homology, but see the next remark).

\begin{rmk}
    Comparing with \cite[Definition 3.7.1]{BG}, we note that this older definition was incorrect. Instead of defining the filtration directly on the homology, one should define the filtration $F_k C_*^{BM}(\operatorname{Conf}^n(X))$ at the chain level in the same fashion as described there, and consider the induced filtration on homology,
    \[
        F_k H_*^{BM}(\operatorname{Conf}^n(X)) := \operatorname{Im}\big[H_*(F_k C_*^{BM}(\operatorname{Conf}^n(X))) \to H_*^{BM}(\operatorname{Conf}^n(X))\big].
    \]
\end{rmk}

\begin{cor} \label{cor:factoring through homological action}
    When $X$ is a compact formal space, e.g. a finite wedge of spheres, then
    $\gr^F H_c^{*}(F(X))$ is naturally isomorphic to the CE-homology $H^{CE}_{*}( H(X) \otimes \Suspension\Lie )$. 
    
    Moreover, for any map $X\to Y$ between compact formal spaces, the induced map $\gr^F H^*_c(F(Y,n)) \to \gr^F H^*_c(F(X,n))$ is computed 
    from the natural map $H^*(Y) \to H^*(X)$ via its action on the CE-homology $H^{CE}_*( H^*(-)\otimes \Suspension\Lie)$.

    In particular, the action of the monoid $\EndSpace{X}$
    on $\gr^F H^*_c(F(X,n))$ factors through $\End(H^*(X))$.
\end{cor}

Unpacking the definition of the CE-complex when $X$ is a wedge of equidimensional spheres and forgetting the $\SymGroup$-action, the first page of the collision spectral sequence takes the following explicit form involving tensor powers.

\begin{prop}\label{prop:spectral_sequence_unidim}
When $X = \bigvee_{i=1}^g S^d$ is a wedge of equidimensional spheres so that $H^d(X) \cong \Q^g$, the $E_1$-page of the collision spectral sequence takes the form
\begin{equation}\label{eq:E1 term unidim}
    E_1^{p, q} \cong \begin{cases}
    T^k(H^d(X))^{\oplus \left(|s(n, n-p)|\cdot \binom{n-p}{k}\right)} & \text{ if $q=dk$ and $p+k \leq n$} \\
    0 & \text{ otherwise}
    \end{cases}
\end{equation}
with $T^k(V) = V^{\otimes k}$, and $|s(n, n-p)|$ denoting an unsigned Stirling number of the first kind (see e.g. \cite[\S 1.3]{Stanley11}).
\end{prop}

Each of these $E_1^{p,q}$-terms admits an action by $\SymGroup$, but we do not have an explicit description of the latter in closed form.
    \begin{proof}[Proof sketch]
        Recall that Chevalley--Eilenberg complex of a twisted Lie algebra $\mathfrak{g}$ has underlying symmetric sequence of vector spaces the composition product $\operatorname{Com}\circ \mathfrak{g}$, with $\operatorname{Com}$ the commutative operad. When $\mathfrak{g} = (\Q \oplus H^d(X)) \otimes \Suspension\Lie$, this composition consists of tensors power of $H^d(X)$ tensored with the composition $\operatorname{Com}\circ \Suspension\Lie$.
        
        On the other hand, applying Petersen's formula for $X=\mathbb{R}^2$ identifies $\operatorname{Com}\circ \Suspension\Lie$ with the cohomology of $F(\mathbb{R}^2, n)$ -- the configuration spaces of $n$ points in the plane. Their Betti numbers are expressed in terms of Stirling numbers -- see \cite{Get-resolving-Hodge}. Keeping track of the filtration and tensor degree gives the stated formula.
    \end{proof}
    
\subsection{Action by outer automorphisms of free groups}\label{sec:out-action}
The wedge $\vee_{i=1}^g S^1$ is a classifying space for the free group $F_g$, thus its self-maps up to homotopy are given by $\End(F_g)$ up to conjugation. As explained below, it follows that the graded vector spaces $H^*_c( F(\vee_{i=1}^g S^1,n))$ are representations of this endomorphism monoid (the statements in this section apply equally well to cohomology with integer coefficients, but we will not need that). 

Pursuing further naturality, we use the description in terms of relative cohomology $H^*_c(F(X,n)) \cong H^*(X^n,\Delta^n(X))$ for every compact Hausdorff space $X$, where $\Delta^n(X)$ is the fat diagonal.  Since $\vee_{i=1}^g S^1$ is homotopy equivalent to the standard simplicial classifying space $BF_g$, and since the functors $X\mapsto X^n$ and $X\mapsto \Delta^n(X)$ 
are homotopy invariant, there is a homotopy equivalence of pairs 
\begin{equation}
    ((\vee_{i=1}^g S^1)^n,\Delta^n(\vee_{i=1}^g S^1)) \;\tilde\longrightarrow\; ((BF_g)^n,\Delta^n(BF_g))
\end{equation}
with the latter pair obviously functorial in $F_g$.
More precisely, let $\mathbf{grp}$ be the category of finitely generated free groups. The following is clear.
\begin{prop}
The construction $F_g \mapsto H^*( (BF_g)^n,\Delta^n(BF_g))$
is a contravariant functor from $\mathbf{grp}$ to graded vector spaces, and for every $g\geq 1$ it is isomorphic to $H^*_c( F(\vee_g S^1,n))$ as $\SymGroup$-representations.

In particular, for every $g$ and $n\geq 1$ the vector spaces $H^*_c( F(\vee_g S^1,n))$ are equipped with a natural action of the monoid $\End(F_g)$.
\end{prop}


Every pointed map of spaces $f:\vee_{i=1}^g S^1 \to \vee_{i=1}^h S^1$ is determined up to homotopy by the induced homomorphism  $\pi_1(f): F_g\to F_h$, and so the above functor from $\mathbf{gr}$ completely characterizes the functoriality of $H^*_c( F(\vee_{i=1}^g S^1,n))$ on wedges of circles.

Of particular interest, every $H^*_c( F(\vee_{i=1}^g S^1,n))$ is a representation of the automorphism group $\Aut(F_g)$. But since an inner automorphism -- conjugation by some $\sigma\in F_g$ -- is given by a map that is nonpointed-homotopic to the identity, its action on $H^*_c( F(\vee_{i=1}^g S^1,n))$ must be trivial. In other words, it is an outer functor (see Definition \ref{def:outer-functor}).
Note that the configuration space functor  $F(-,n)$ admits a natural $\SymGroup$-action, and thus $\SymGroup$ acts on the resulting outer functor by natural transformations.

Lastly, pass to the associated graded of the collision filtration. By Corollary \ref{cor:factoring through homological action} the action of a homomorphism $F_g\to F_h$ on $\gr^F H^*_c( F(-,n))$ is determined by the action on cohomology $\mathbb{Z}^h \to \mathbb{Z}^g$. Making this precise (compare with \cite[Theorems 6.9, 17.8]{PV18}),

\begin{prop}\label{prop:GL action after grading}
Under the collision filtration, the associated graded of the outer functor $F_g \mapsto H^*_c( F(\vee_{i=1}^g S^1,n) ; \mathbb{Q})$ 
is the restriction of a well-defined functor on the category of finitely-generated free abelian groups along the abelianization,
\begin{equation}\label{eq:free-group-polynomial-functor}
    F_g \;\overset{\operatorname{ab}}{\longmapsto} \; \mathbb{Z}^g \longmapsto \gr H^*_c( F(\vee_{i=1}^g S^1,n))
\end{equation}
In particular, for every $g$ and $n\geq 1$ the graded quotients $\gr H^*_c( F(\vee_{i=1}^g S^1,n))$ admit a natural action of the matrix ring $\End(\mathbb{Z}^g)$.
\end{prop}

\begin{proof}
    Every homomorphism $f:\mathbb{Z}^g\to \mathbb{Z}^h$ is the abelianization of some $\bar{f}:F_g\to F_h$. Define $f^*: \gr H^*_c( F(\vee_{i=1}^h S^1,n))\to \gr H^*_c( F(\vee_{i=1}^g S^1,n))$ by $\bar{f}^*$.
    Corollary \ref{cor:factoring through homological action} shows that this does not depend on the choice of $\bar{f}$.
\end{proof}
We will see below that \eqref{eq:free-group-polynomial-functor} is in fact a polynomial functor on $\mathbf{grp}$ and that the collision filtration coincides with the polynomial filtration. Our eventual goal is to work towards computing its composition factors (see \S\ref{sec:polynomiality}).

\begin{rmk}[\textbf{Extensions of the $\Out$-action}]\label{rmk:extensions}
   Since rational polynomial representations of $\GL_g(\mathbb{Z})$ admit no non-trivial extensions, one might expect that the extension problem for the collision filtration is trivial and thus that $H^*_c( F(\vee_{i=1}^g S^1,n) ; \mathbb{Q})$ is isomorphic to $H^{CE}_*(H^*(\vee_{i=1}^g S^1)\otimes \Suspension \Lie)$ as representations.
   
   However, this is not the case: the $\Out(F_g)$-representation $H^*_c( F(\vee_{i=1}^g S^1,n) ; \mathbb{Q})$ does not factor through $\GL_g(\mathbb{Z})$, and the collision filtration exhibits enormously complicated nontrivial extensions -- see \cite[\S2.3]{TW19} and \cite[Theorem 13]{PV18}.
\end{rmk}

\subsection{Equivariant CW-structure}\label{section:2-step}
We now bring in a completely orthogonal approach to computing $H^*_c(F(X,n);\Q)$ when $X$ is a wedge of circles: using an $\SymGroup[n]$-equivariant CW-structure on the one-point compactifiction $F(X,n)^+$.
This was introduced and featured as the central computational tool in the first author's recent work \cite{BCGY21}.

The resulting cellular cochain complex consists of only two nontrivial chain groups, each of which is free as an $\SymGroup[n]$-representation. The linear dual of this $2$-step complex also featured in Powell--Vespa \cite{PV18}, though the vast generality of their framework needs some unpacking to make it amenable to computations.

\begin{prop}[{\cite[Theorem 1.2]{BCGY21}}]\label{prop:2-step}
    Let $X= \bigvee_{i=1}^g S^1$ be a wedge of $g$ circles. The $\SymGroup$-representation $H^*_c(F(X,n))$ is computed by a $2$-step complex of free $\SymGroup$-modules
    \begin{equation} \label{eq:2-step integral}
        \ldots \to 0\to \underbrace{\Q[\SymGroup]^{\oplus\binom{n+g-2}{g-1}}}_{\text{cohomological degree }n-1} \to \underbrace{\Q[\SymGroup]^{\oplus\binom{n+g-1}{g-1}}}_{\text{cohomological degree }n} \to 0 \to \ldots
    \end{equation}
\end{prop}

As in the previous section, this statement holds equally well for integral cohomology, but we will not need that here.
\begin{proof}
    First note that $H^*_c(F(X,n))$ coincides with the ordinary reduced cohomology of the one-point compactification $F(X,n)^+$. Then \cite{BCGY21} gives an $\SymGroup$-equivariant CW-structure on this compactification, with cells in dimensions $n-1$ and $n$ only and a free $\SymGroup$-action. The resulting cellular cochain complex is as claimed.
\end{proof}

\cite{BCGY21} also gives explicit formulas for the coboundary map and for the actions of endomorphisms of $X$ on this complex. We do use these in explicit calculations, but their details are not important for our discussion.

What is important is that the complex splits into its isotypic components. That is, for every irreducible representation $\Specht{\lambda}$ of $\SymGroup$, the multiplicity space $ \Specht{\lambda}\otimes_{\SymGroup} H^*_c(F(X,n))$ is also computed by the same complex
\begin{equation} \label{eq:2-step isotypic}
    \ldots\to 0 \to \Specht{\lambda}^{\oplus\binom{n+g-2}{g-1}} \to \Specht{\lambda}^{\oplus\binom{n+g-1}{g-1}} \to 0 \to \ldots
\end{equation}
where one only needs to specialize the coboundary map to $\Specht{\lambda}$.
This small complex efficiently computes the multiplicity of $\Specht{\lambda}$ in $H^*_c(F(X,n))$ for various values of $n$ and $g$. Its downside, however, is that the collision filtration is not as readily accessible as in the Chevalley--Eilenberg complex of \S\ref{section:CE-complex}. The following discussion explains how to `see' the collision filtration on the terms of \eqref{eq:2-step isotypic}.

\begin{rmk}[\textbf{$\End(F_g)$-action}]

Recall that upon taking the associated graded of the collision filtration on $H^*_c(F(X,n);\Q)$, the $\End(F_g)$-action discussed in \S\ref{sec:out-action} factors through the matrix ring $\End(\mathbb{Z}^g)$ (see Corollary \ref{prop:GL action after grading}). Furthermore, on the associated graded, the filtration degree is exhibited by the weights of the action of the diagonal matrices $\mathbb{Z}^g \subset \End(\mathbb{Z}^g)$. However, these weights can already be read-off from simple chain-level endomorphisms of the 2-step complex \eqref{eq:2-step isotypic}.
\end{rmk}

\begin{lem}
Let $X= \bigvee_{i=1}^g S^1$ be a wedge of $g$ circles and let $\Specht{\lambda}$ be an irreducible representation of $\SymGroup$. The action of the diagonal matrix $\operatorname{diag}(d_1,\ldots,d_g) \in M_g(\mathbb{Z})$ on the $\Specht{\lambda}$-multiplicity space of $\gr H^*_c(F(X,n);\Q)$ is realized by the (non-invertible) space-level map $\varphi: X\to X$ that for every $1\leq i \leq g$ wraps the $i$-th circle around itself with degree $d_i$.

This induces an operator on the 2-step complex \eqref{eq:2-step isotypic} that preserves each $\Specht{\lambda}$-summand, and is thus an effectively computable block-diagonal transformation whose eigenspaces determine the collision filtration.


\end{lem}

\begin{proof}
    The transformation $\varphi$ acts on the homology of $X$ by the diagonal matrix $\operatorname{diag}(d_1,\ldots,d_g)$.
    Since the induced map on $F(X,n)^+$ is equivariant and cellular, it further induces an endomorphism of the complex \eqref{eq:2-step isotypic}.
    
    The result of wrapping the $i$-th circle around itself is realized as a sum of shuffles of the points lying on that circle. But regardless of their order, the number of points on each circle is preserved by the operation. These numbers of points are the invariants that differentiate the $\SymGroup$-orbits of cells in $F(X,n)^+$, hence the summands of \eqref{eq:2-step isotypic} are preserved. In particular, the chain operator $\varphi_*$ can be diagonalized within every summand.
    
    Lastly, since a scalar matrix $\operatorname{diag}(n,\ldots,n)$ acts on $H^1(X)$ by the scalar $n$, will act on the subspace of collision filtration degree $\leq p$ by eigenvalues $n^d$ with $n-p \leq d \leq n$. In this way the eigenspaces of $\varphi$ detect the collision filtration at the chain level.
\end{proof}

\begin{rmk}
 Varying the diagonal entries $(d_1,\ldots,d_g)$ in the above lemma, the trace by which $\varphi$ acts on \eqref{eq:2-step isotypic} characterizes the $\GL_g(\Z)$-action on $\gr H^*_c(F(X,n))$ completely, which is the ultimage goal of this project -- see \S\ref{sec:polynomiality} below.
 
 Furthermore, the block-diagonal structure of the action of $\varphi$ on \eqref{eq:2-step isotypic} is effective for computer calculations: it can be diagonalized on every $\Specht{\lambda}$ summand individually, thus allowing for parallel calculations on relatively small matrices.

\end{rmk}

\subsection{Bead representations}\label{sec:bead}

Let us recall where the $\SymGroup\times \Out(F_g)$-representations $H^*_c(F(\bigvee_{i=1}^g S^1,n))$ previously appeared in the literature and establish a dictionary with related work. Turchin and Willwacher studied the Hochschild--Pirashvili cohomology of $\bigvee_{i=1}^g S^1$, and in \cite[Section 2.5]{TW19} they consider the $(1,\ldots,1)$-multigraded component, equipped with its $\SymGroup$-action -- by Proposition \ref{prop:geometry of HH} this is the same as our $H^*_c(F(\bigvee_{i=1}^g S^1,n))$. They then split the $\SymGroup$-action into isotypic components and call the resulting $\Out(F_g)$-representation \emph{bead representations}.
\begin{prop}\label{cor:bead}
For every $\lambda \vdash n$, Turchin--Willwacher's bead representations $U_\lambda^{I},U_\lambda^{II}$ are the $\Out(F_g)$-equivariant multiplicity of the Specht module $\Specht{\lambda}$ in $H^*_c(F(\bigvee_{i=1}^g S^1, n))$ for $* = n$ and $n-1$ respectively. That is,
\begin{equation}
    U_\lambda^*(g) = H^*_c(F(\vee_{i=1}^g S^1, n))\otimes_{\SymGroup} \Specht{\lambda}.
\end{equation}
\end{prop}
They pose a (still open) problem to describe these representations, starting with computing the decomposition of $\gr H^*_c(F(\vee_{i=1}^g S^1, n))$ into Schur functors. This latter point is exactly the subject of this paper, e.g. they are the rows of Table \ref{table:n=7 colors}. The decomposition of all $U_{\lambda}^{I}$ and $U_{\lambda}^{II}$ with $|\lambda| = n\leq 10$ can be found on \href{https://louishainaut.github.io/GH-ConfSpace/}{this webpage}\footnote{\url{https://louishainaut.github.io/GH-ConfSpace/}}.

\begin{rmk}[Extensions of Schur functors]
The bead representations turn out to be central to the theory of outer polynomial functors (see Definition \ref{def:outer-functor}). Indeed, recall that the irreducible polynomial $\GL_g(\Z)$-representations are given by Schur functors $\Schur{\lambda}(\Q^g)$, which admit no non-trivial extensions. On the other hand, every Schur functor can be pulled back to $\Out(F_g)$, and as such they do admit extensions, i.e. there exist non-split surjections of $\Out(F_g)$-representations $E \onto \iota^*\Schur{\lambda}(\Q^g)$. Amazingly,
Powell--Vespa give a canonical surjection $H^n_c(F(X,n)) \otimes_{\SymGroup} \Specht{\TransposePartition{\lambda}} \onto \iota^* \Schur{\lambda}$ and prove that 
it is the \emph{maximal indecomposable extension} of outer polynomial functors, i.e. a projective cover (see \cite[Theorem 19.1]{PV18} for the dual statement). Moreover, every outer polynomial functor
admits a minimal projective resolution by sums of bead representations.

Decomposing the bead representations into their composition factors is thus a fundamental task in the representation theory of $\Out(F_g)$, serving as further motivation for our present study.
\end{rmk}

Powell and Vespa prove a multitude of facts about these functors in \cite{PV18}, and we think it valuable to make their results accessible to the topologically minded reader. We will therefore devote the rest of this section bridging the terminology gap between their setup and what we consider to be more natural in the topological context.

\begin{itemize}

    \item Powell--Vespa study the Hochschild--Pirashvili homology of $X$ that we discuss in \S\ref{sec:hochschild}. In \eqref{eq:HH lower star} we highlight its relation to homology of compactified configuration spaces as a functor in both a space $X$ and a vector space $V$. Powell--Vespa denote this bi-functor by $(X, V) \mapsto HH_*(X, \Loday{A_V}{A_V})$, which is isomorphic to our $HH_*(X_+, A_V)$ in \eqref{eq:HH lower star}. This is an analytic functor in $V$, and for fixed $X$ our Proposition \ref{prop:geometry of HH} identifies the corresponding symmetric sequence of coefficients as the Borel--Moore homology $n\mapsto H_*^{BM}(F(X, n))$, linearly dual to $H_c^*(F(X, n))$. In Powell--Vespa's notation, this symmetric sequence of coefficients is the functor $HH_*(X;\vartheta^*\operatorname{Inj}^{\textbf{Fin}})$.
    
    \item They get a functor from the category of free groups by composing with the classifying space $B(-):F_g\mapsto B(F_g)$ in place of the space $X$. Since $B(F_g)$ is coherently homotopy equivalent to the wedge $\vee_{i=1}^g S^1$, their functors agree with the ones we study here.
    
    \item 
    They show that these Hochschild homology groups form an outer polynomial functor on the category of free groups, and so over a field of characteristic $0$ all their composition factors are of the form $\iota^*\Schur{\lambda}$ for various partitions $\lambda$. They denote these functors by $\alpha S_\lambda$. Calculating their multiplicities is the subject of our work.
    
    The associated graded representation we call $\gr H_c^*(F(\vee_{i=1}^g S^1, n))$ corresponds in their formalism to $\alpha_n \operatorname{cr}_n HH_*(B(-); \vartheta^*\operatorname{Inj}^{\textbf{Fin}})$, evaluated at $F_g$. Here $\operatorname{cr}_n$ is the functor that extracts the $n$-th coefficient of a polynomial functor, returning an $\SymGroup$-representation, and $\alpha_n$ converts this representation into a sum of Schur functors.
    
    \item For an integer partition $\lambda\vdash n$, a subscript $\lambda$ either on Hochschild homology $HH_*(-;\vartheta^*\operatorname{Inj}^{\textbf{Fin}})$ or on the coefficients $\vartheta^*\operatorname{Inj}^{\textbf{Fin}}$ refers to the $\Specht{\lambda}$-multiplicity space of the corresponding $\SymGroup$-representation.
    
    In particular, our coefficients $\Phi[\lambda, m]$ for a partition $\lambda$ and $m\leq |\lambda|$ is the $\SymGroup[m]$-representation $ \operatorname{cr}_m HH_*(B(-); \vartheta^*\operatorname{Inj}^{\textbf{Fin}}_{\lambda})$, with $\Phi[\lambda, \mu]$ giving the multiplicity of $\alpha S_{\TransposePartition{\mu}}$ (note that the partition $\mu$ needs to be transposed).
    
    \item As they discuss in \cite[\S 16.3]{PV18}, for $\lambda\vdash n$ a partition of $n$, the bead representations $U_{\lambda}^{I}$ and $U_{\lambda}^{II}$, mentioned at the beginning of \S\ref{sec:bead}, are dual to $HH_n(B(-); \vartheta^*\operatorname{Inj}^{\textbf{Fin}}_{\lambda})$ and $HH_{n-1}(B(-); \vartheta^*\operatorname{Inj}^{\textbf{Fin}}_{\lambda})$ respectively. Powell--Vespa refer to these by $\omega\beta_n(S_{\lambda})$ and $(\operatorname{Coker}_{\overline{\operatorname{ad}}}\mathbb{P}^{\Sigma}_{\operatorname{coalg}})_{\lambda}$ respectively.

    
\end{itemize}

\begin{expl}[$(2,1^{n-2})$ bead representation] \label{ex:(2,1)-bead rep}
To illustrate the translation from Powell--Vespa's formalism, consider \cite[Example 4]{PV18}:
\begin{equation}
    HH_*(B(-);\vartheta^*\operatorname{Inj}^{\textbf{Fin}}_{(2,1^{n-2})}) \cong \begin{cases}
    \alpha S_{(n-1,1)} & *=n \\
    0 & \text{otherwise},
    \end{cases}
\end{equation}
the $*=n$ case is also denoted $\omega \beta_{n} S_{(2,1^{n-2})}$.

We reinterpret this line as stating that for $X$ a wedge of circles, the cohomology $H^{*}_c(F(X,n))$ has as its $\Specht{(2,1^{n-2})}$-multiplicity space isomorphic to the Schur functor $\iota^*\Schur{\TransposePartition{(n-1,1)}}(\tilde{H}^1(X))$ when $*=n$, and otherwise vanishes. In terms of the coefficients $\Phi[-,-]$, this means that the graded vector space $\Phi[(2,1^{n-2}), (2,1^{n-2})]$ has rank $1$ concentrated in degree $0$, and that for every other partition $\lambda\neq (2,1^{n-2})$ the graded vector space $\Phi[(2,1^{n-2}), \lambda]$ is trivial.
\end{expl}

For completeness of the dictionary, we include:
\begin{itemize}
    \item In their calculations Powell--Vespa frequently use the functors $\beta_d S_\lambda$, which in our terminology are linear dual to the top cochains in the $2$-step complex \eqref{eq:2-step isotypic} equipped with $\Aut(F_g)$-actions . These also assemble to a polynomial functor on free groups, but it does not factor through $\Out(F_g)$, i.e. conjugations act nontrivially as noted in \cite[Remark 2.12]{BCGY21}.
    
\end{itemize}

\section{Polynomiality and consequences} \label{sec:polynomiality}

Let us next consider configurations on $X$  for $X$ a wedge of spheres. As we let the number of spheres vary, the compactly supported cohomology acquires the structure of a polynomial functor evaluated on the vector space $\tilde{H}^*(X)$, as shown in this section.

\begin{rmk}\label{rmk:using geometry}
Having a single polynomial functor compute the cohomology of $F(X,n)$ for any finite wedge of spheres constrains the functor and endows it with further structure that one would not have expected. For example consider the following three cases:
\begin{enumerate}
\item For configurations on a wedge of $1$-spheres, the cohomology is closely related to the algebraic construction of Hochschild--Pirashvili homology as an exponential functor, see \cite{PV18}. On the other hand, the configuration spaces admit a Fox-Neuwirth cell decomposition\footnote{This is a decomposition into locally closed sets that become cells of the one-point compactification.} with cells in only the top two dimensions, freely permuted by the symmetric group. This gives a free presentation of the cohomology as an $\SymGroup$-module, leading to rather efficient calculations -- details in \S\ref{section:2-step}.


    \item For configurations on a wedge of $2$-spheres, the ambient space $X= \bigvee_{i=1}^g S^2$ can be realized as a complex algebraic curve of genus $g$. The cohomology in question thus admits a mixed Hodge structure, and an action by the Galois group of $\Q$.
    Moreover, for a single sphere $F(\mathbb{C}P^1,n) \cong PSL_2(\C)\times \ModuliCurve{0,n}$, which explains the appearance of these moduli spaces in our calculations below.
    
    \item For configurations on a wedge of $3$-spheres, since $S^3\cong SU(2)$ is a Lie group, it follows that $F(S^3,n) \cong S^3 \times F(\R^3,n-1)$. This lets us identify the equivariant multiplicity of all exterior powers. 
\end{enumerate}

\end{rmk}

\subsection{Polynomiality for wedges of spheres}

The polynomiality statement of Theorem \ref{thm:polynomiality} will follow from the more refined main result of this section. Let $\Sigma$ denote the category of finite sets and bijections\footnote{In the representation stability literature this category is called $\textbf{FB}$.}, with skeleton the finite sets $\n = \{1,2,\ldots, n\}$.

\begin{thm}\label{thm:graded polynomiality}
    Let $X$ be a finite wedge of spheres, possibly of different dimensions, and consider the collision filtration on $H_c^*(F(X, n))$ (defined in \ref{terminology:collision}). Its associated graded quotients
    \[
        \gr_p^F H_c^{p+*}(F(X, n)) = F_p H_c^{p+*}(F(X, n))/F_{p-1} H_c^{p+*}(F(X, n))
    \]
    admit the following algebraic description.
    
    There exists a functor in two variables $\Psi^p\colon \Sigma \times \GradedVect_{\Q} \to \GradedVect_{\Q}$ such that $\Psi^p(\n,-)$ is a polynomial functor of degree $n-p$
    with a natural $\SymGroup$-action, and such that for any finite wedge of spheres $X = \bigvee_{i\in I}{S^{d_i}}$ there is a natural isomorphism of graded $\SymGroup$-representations
    \[
        \gr_p^F H_c^{p+\ast}(F(X, n)) = \Psi^p(\n,\tilde{H}^*(X)).
    \]
\end{thm}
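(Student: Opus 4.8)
The idea is to exhibit $\Psi^p$ as the $p$-th cohomology of a complex of polynomial functors obtained by running the construction of the collision spectral sequence in the category of graded vector spaces, and then to identify this cohomology with the stated associated graded by invoking the degeneration at $E_2$ established above.

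\textbf{Step 1 (a graded refinement of the Polynomiality theorem).} First I would recall that the first page of the collision spectral sequence for $F(X,n)$ is
\[
    E_1^{p,q} = \bigoplus_{\substack{T\in\Pi_n\\ |T| = n-p}} \Bigl(\bigotimes_{i=1}^{n-p}\tilde H^{n_i-1}(\llbracket\Pi_{n_i}\rrbracket)\Bigr)\otimes H_c^q(X^{n-p}),
\]
and that $H_c^\bullet(X^{n-p})\cong H_c^\bullet(X)^{\otimes(n-p)}$ by Künneth. For a wedge of spheres $X=\bigvee_i S^{d_i}$, which is compact, we have $H_c^\bullet(X)=H^\bullet(X)=\Q\oplus V$ with $V:=\tilde H^\bullet(X)$, and its ring structure is the square-zero extension of $\Q$ by $V$, since all reduced cup products vanish on a wedge of spheres (the reduced diagonal is null-homotopic on each wedge summand). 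Hence the whole $E_1$ page, together with the differential $d_1$ — which in \cite[Section 4]{Pet20} is assembled out of the cup product on $H_c^\bullet(X)$ and the homology of the partition lattices, the latter not involving $X$ — is the value at $V$ of a complex $(\mathcal E_1^{\bullet,*},d_1)$ of functors $\GradedVect_{\Q}\to\GradedVect_{\Q}$ carrying a natural $\SymGroup[n]$-action (functorial in $[n]\in\Sigma$): the only input depending on $X$ is the graded vector space $V$, and all the remaining operations (the Koszul-signed expansion of $(\Q\oplus V)^{\otimes(n-p)}$ and the square-zero multiplication) are natural and multilinear in $V$. From $(\Q\oplus V)^{\otimes(n-p)}\cong\bigoplus_{j=0}^{n-p}\binom{n-p}{j}V^{\otimes j}$ one reads off that $\mathcal E_1^{p,*}$ is polynomial of degree $\leq n-p$.

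\textbf{Step 2 (definition of $\Psi^p$ and conclusion).} Then I would set $\Psi^p := H^p\bigl(\mathcal E_1^{\bullet,*},d_1\bigr)$, a functor on $\GradedVect_{\Q}\times\Sigma$. Being a subquotient of the polynomial functor $\mathcal E_1^{p,*}$, it is polynomial of degree $\leq n-p$; moreover the top weight $n-p$ is carried by the summand $\bigoplus_{|T|=n-p}(\bigotimes_i\tilde H^{n_i-1}(\llbracket\Pi_{n_i}\rrbracket))\otimes V^{\otimes(n-p)}$ of $\mathcal E_1^{p,*}$, on which $d_1$ vanishes since merging any two blocks there forces a product landing in $V\cdot V=0$, so $\Psi^p(-,[n])$ is of degree exactly $n-p$ whenever it is nonzero. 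Finally, for any wedge of spheres $X$, Step 1 identifies $(\mathcal E_1^{\bullet,*},d_1)$ evaluated at $\tilde H^\bullet(X)$ with $(E_1^{\bullet,*},d_1)$ of the collision spectral sequence of $F(X,n)$, whence $\Psi^p(\tilde H^\bullet(X),[n])=E_2^{p,*}$; by the degeneration of the collision spectral sequence at $E_2$, $E_2^{p,*}=E_\infty^{p,*}=\gr^p H_c^{p+*}(F(X,n))$. All of these identifications are $\SymGroup$-equivariant and natural for maps of wedges of spheres, since such a map acts on the collision spectral sequence through the induced map on $\tilde H^\bullet$.

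\textbf{Main obstacle.} The delicate point is Step 1: one must enter the construction of the spectral sequence and verify that, after the simplifications available for the constant sheaf $\Q$, the differential $d_1$ is built purely from the cup product on $H_c^\bullet(X)$ — so that substituting the formal square-zero algebra $\Q\oplus V$ is legitimate — and that the Koszul sign rule is applied consistently, so that the even/odd-dimensional dichotomy recalled after the Polynomiality theorem is reproduced. Once this is granted the rest is formal. Alternatively, I would bypass the spectral sequence altogether and run the same argument on the weight-$n$ summand of the Chevalley–Eilenberg complex $C^{CE}_\bullet\bigl((\Q\oplus V)\otimes\Suspension\Lie\bigr)$ of \cite[Corollary 8.8]{Pet20}, filtered so that its associated graded recovers the $E_1$ page; that complex is manifestly polynomial in $V$ from the outset.
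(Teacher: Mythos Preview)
Your overall strategy --- define $\Psi^p$ as the $p$-th cohomology of the polynomial-in-$V$ complex $(\mathcal E_1^{\bullet,*}, d_1)$ and then invoke $E_2$-degeneration --- is correct and is exactly what the paper does. The only cosmetic difference is that the paper phrases this complex as the Chevalley--Eilenberg complex of the twisted Lie algebra $A[W]\otimes\Suspension\Lie$ rather than as the $E_1$ page (you mention this alternative yourself at the end). Your Step~1 establishes the upper bound $\deg\Psi^p(-,[n])\leq n-p$ exactly as the paper's first lemma does.

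The gap is in your argument for ``degree exactly $n-p$''. You observe that the outgoing differential $d_1\colon \mathcal E_1^{p,*}\to\mathcal E_1^{p+1,*}$ vanishes on the weight-$(n-p)$ summand because $V\cdot V=0$; this puts that summand in $\ker d_1$, but says nothing about the incoming differential $d_1\colon \mathcal E_1^{p-1,*}\to\mathcal E_1^{p,*}$. The weight-$(n-p)$ part of $\mathcal E_1^{p-1,*}$ is nonzero --- it consists of terms with $n-p$ factors in $V$ and one factor equal to the unit $1$ --- and merging the unit block with a $V$-block lands squarely in the weight-$(n-p)$ summand of $\mathcal E_1^{p,*}$. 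So the top-weight summand may lie partly or entirely in the image, and your argument does not rule this out; hence ``the top weight survives'' does not follow.

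The paper establishes the sharp degree by a completely different route: it specializes to $X$ a wedge of circles and uses the two-step cell structure on $F(X,n)^+$ to show that $H_c^i(F(X,n))=0$ unless $i\in\{n-1,n\}$. Feeding this vanishing back into the polynomial description forces the coefficients $\Phi^p[n,m]$ to vanish unless $m\in\{n-p-1,\,n-p\}$, which pins down the degree. This geometric input has no analogue in your proposal, and it in fact yields the stronger conclusion that all homogeneous components below degree $n-p-1$ vanish, not merely that the top one survives.
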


\begin{note}
Let us reiterate that while the two inputs of $\Psi^p$ are different types of objects, the equivalence of categories mentioned in Proposition \ref{prop:equiv symmetric sequence analytic functors} relates such functors and bi-functors $\Sigma\times\Sigma \to \GradedVect_\Q$ as well as $\GradedVect_\Q\times \GradedVect_\Q \to \GradedVect_\Q$ taking inputs of the same type. We prefer the presentation of $\Psi^p$ given here as it lends itself well to a simple geometric interpretation.
\end{note}

A key step in the proof of this theorem is the following lemma. Recall that we defined $A_W$ to be the square-zero algebra $\Q\oplus W$, with trivial multiplication on $W$. Furthermore, the CE-complex $C^{CE}_*(A_W\otimes \Suspension\Lie)(n)$ is bigraded and its filtration by columns is denoted $\mathfrak{F}_{\bullet}$. Thus the graded quotient $\gr^{\mathfrak{F}}_{-p}$ is exactly the $p$-th column of the bicomplex.

\begin{lem}
Let $W$ be a graded vector space and fix degree $p\geq 0$. Then for all $n\in \N$ the functor sending $W$ to the arity $n$ term of the Chevalley--Eilenberg homology $\gr_{-p}^{\mathfrak{F}} H^{CE}_{\ast}(A_W\otimes\Suspension\Lie)(n)$ is a polynomial functor of degree $p$ taking values in graded vector spaces. 

\end{lem}

\begin{proof}
First, the construction $W\mapsto A_W\otimes \Suspension\Lie$ produces a twisted dg Lie algebra that in every arity is a polynomial functor of degree $1$. Clearly this construction is functorial in $W$, i.e. linear maps $W\to W'$ induce morphisms of twisted dg Lie algebras. 

Second, for a Lie algebra $\mathfrak{g}$ in symmetric sequences, the $p$-th column of the CE-bicomplex is defined as a natural quotient of $\mathfrak{g}^{\otimes p}$. In every arity $n$ this expression is given up to degree shifts by 
\[
\mathfrak{g}^{\otimes p} = \bigoplus_{B_1\coprod \ldots \coprod B_p = \n} \mathfrak{g}[B_1] \otimes \ldots \otimes \mathfrak{g}[B_p],
\]
that is, a tensor product of exactly $p$ terms from $\mathfrak{g}$. Thus since every one of the $\mathfrak{g}[k]$'s is a polynomial functor of degree $1$, then the CE-complex in homological degree $p$ is polynomial of degree $p$.

Since the category of polynomial functors is abelian, it only remains to note that the CE-differentials respect the polynomial functor structure, i.e. that they are natural transformations in $W$. This follows from the general fact that the CE-complex $C_{-\ast}^{CE}(L)$ is functorial in its input, the Lie algebra $L$. 

\end{proof}

\begin{proof}[Proof of Theorem \ref{thm:graded polynomiality}]
Recall that in Section \ref{section:CE-complex} we show that the associated graded of the collision filtration on $H^*_c(F(X,n))$ is computed, after suitable regrading, by the Chevalley--Eilenberg (CE) homology of a twisted Lie algebra, naturally in $X$. Explicitly, there is an isomorphism of graded $\SymGroup$-representations
\begin{equation}
    \gr_p^F H^{*}_c(F(X,n)) \cong \gr_{p-n}^{\mathfrak{F}} H^{CE}_{\ast}( H^*(X)\otimes \Suspension\Lie (n)).
\end{equation}

Thus the polynomiality statement reduces to one about this CE-homology.

Furthermore, the cohomology algebra of a wedge of spheres is the square-zero algebra $H^*(X) \cong \Q1\oplus \tilde{H}^*(X)$. Therefore the claim of Theorem \ref{thm:graded polynomiality} follows from the last lemma.
\end{proof}

The polynomiality result thus proved has many consequences for the cohomology $H^*_c(F(X,n))$, but the converse also holds: we next use the cell structure on $F(X,n)^+$ from \S\ref{section:2-step} to constrain the nonzero `coefficients' of the polynomial functor and bound its degree.

\begin{prop}\label{prop:vanishing coefficients}
    For every $n\in \N$ the polynomial functors $\Psi^p(\n,-)\colon \GradedVect_{\Q}\to\GradedVect_{\Q}$ from the previous theorem have the following properties:
    \begin{enumerate}
        \item $\Psi^p(\n,-) = 0$ for all $p > n-1$.
        \item $\Psi^p(\n,W)$ decomposes as
        \[
            \Psi^p(\n,W) \cong \bigoplus_{m=n-p-1}^{n-p} \Phi^p[n,m]\otimes_{\SymGroup[m]} W^{\otimes m}
        \]
        for some $\SymGroup[n]\times\SymGroup[m]$-representations $\Phi^p[n,m]$. In particular the polynomial functor $\Psi^p(\n,-)$ has degree $n-p$ and only two homogeneous terms.
        
    \setcounter{continue-counter}{\value{enumi}}
\end{enumerate}
In the `leading' special case $p=0$, the two nontrivial terms $\Phi^0[n,n]$ and $\Phi^0[n,n-1]$ are given as follows.
\begin{enumerate}
\setcounter{enumi}{\value{continue-counter}}
        \item The coefficient $\Phi^0[n,n]$ is the "diagonal" representation
        \[
            \Phi^0[n,n] = \bigoplus_{\lambda\vdash n}{\Specht{\lambda}\boxtimes\Specht{\lambda}}.
        \]
        \item The coefficient $\Phi^0[n,n-1]$ is
        \[
            \Phi^0[n,n-1] = \Specht{(1^n)}\boxtimes\Specht{(1^{n-1})}.
        \]
    \end{enumerate}
\end{prop}

We further note that the highest filtration terms not covered by this proposition, $\Psi^{n-1}[\n,-]$ and $\Psi^{n-2}[\n,-]$, are computed completely in \S\ref{sec:multiplicity symmetric powers}, as they consists only of functors of degree $\leq 2$ and are thus multiples of symmetric and alternating powers.
\begin{proof}
    The first statement is simply the claim that the associated graded quotient $\gr^p H_c^{p+*}(F(X, n))$ is trivial for $p<0$ and $p\geq n$, both cases being clear.
    
    For the remaining statements, we use Proposition \ref{prop:image-polynomial-concentrated} to deduce that it is enough to consider $X$ a wedge of $g$ circles with arbitrarily large $g$ to uniquely determine the coefficients $\Phi^p[n,m]$. Then, Proposition \ref{prop:2-step} shows that $H^i_c(F(X,n))=0$ unless $i\in \{n-1,n\}$, so its associated graded is similarly $0$.

    On the other hand, set $W := \tilde{H}^*(X)$ and consider the graded vector space
    \[
    \gr_{p-n}^{\mathfrak{F}} H^{CE}_{\ast}(A_W\otimes\Suspension\Lie)(n) = \bigoplus_{q\in \mathbb{Z}} E^{p,q}
    \]
    with $E^{p,q}$ in grading $q$. Lemma \ref{lem:formal spaces} shows that $E^{p,q} \cong \gr_p^F H^{p+q}_c(F(X,n))$. In light of the previous paragraph it follows that $E^{p,q} = 0$ unless $p+q\in \{n-1,n\}$.
    
    But since $W$ is concentrated in grading $q=1$, the polynomial description of the CE-homology as $\oplus \Phi^p[n,m]\otimes_{\SymGroup[m]}W^{\otimes m}$ is graded such that its $m$-th summand is placed in grading $q=m$. It follows that 
    \begin{equation} \label{eq:vanishing of coefficients}
     \Phi^p[n,m]\otimes_{\SymGroup[m]}W^{\otimes m} = E^{p,m} =0
    \end{equation}
    unless $p+m\in \{n-1,n\}$. The second claim now follows since the above calculation is valid for every $g\geq 1$, and since for $g\geq m$ the vanishing in \eqref{eq:vanishing of coefficients} implies that $\Phi^p[n,m]=0$.
    
    
    The third statement follows from Proposition \ref{prop:spectral_sequence_unidim}: the $n$-th row with grading $q=n$ of the $E_1$-page has only one nonzero term $E_1^{0,n} = T^n(\Q^g)$, the $n$-th tensor power, with the $\SymGroup$-action by permutation of the tensor factors. Since this term can support no differentials, it survives to $E_\infty^{0,n}$ unchanged. Schur--Weyl duality gives a decomposition of $T^n(\Q^g)$ as an $\SymGroup$-equivariant polynomial functor, agreeing with the claimed expression for $\Phi^0[n,n]$.
    
    Our proof of the last statement is long and technical. We defer it to Section \ref{sec:conjectures patterns}.
\end{proof}

Proposition \ref{prop:vanishing coefficients} now implies Theorem \ref{thm:polynomiality}, since the associated graded $\gr H^*_c(F(X,n))$ is a sum $\oplus_p \Psi^p(\n,-)$ up to cohomological shifts, and for $p > n$ the functor $\Psi^p(\n,-)=0$ so only finitely many functors contribute for any given $n$.

\begin{rmk}
    In Theorem \ref{thm:polynomiality}, the coefficients are the graded vector spaces $\Phi[n,m]$, while in Proposition \ref{prop:vanishing coefficients} the coefficients are the (non-graded) vector spaces $\Phi^p[n,m]$. These two objects are related as suggested by the notation: $\Phi[n,m]$ decomposes as
    \[
        \Phi[n,m] = \bigoplus_{p=0}^m{\Phi^p[n,m]},
    \]
    with $\Phi^p[n,m]$ being the part of $\Phi[n,m]$ in degree $p$.
\end{rmk}

\begin{rmk}[Powell--Vespa's polynomiality result]
We bring to the reader's attention the fact that Powell and Vespa proved in \cite[Theorem 5]{PV18} a result that is extremely close to our Theorem \ref{thm:graded polynomiality}. They show that the Hochschild--Pirashvili homology of the classifying spaces $B(F_g)$ with square-zero coefficients form a polynomial functor from the category of finitely generated free groups to graded vector spaces -- see  \S\ref{sec:polynomiality-gr}.

Recalling that $B(F_g)\simeq \bigvee_g S^1$ a wedge of 1-spheres, and that Hochschild--Pirashvili homology with square-zero coefficients is dual to compactly supported cohomology of the configuration spaces (Theorem \ref{thm:hochschild and configurations}),
their polynomiality result generalizes our Theorem \ref{thm:polynomiality} for wedges of circles in that we only work at the associated graded level and with the classical notion of polynomial functors. However, their setup does not include wedges of spheres of higher dimensions as we consider here. 
\end{rmk}

For a first geometric consequence of the polynomiality result we give the following.
\begin{cor}\label{cor:vanishing terms in E2}
    Let $X$ be a finite wedge of spheres, all with the same dimension $d$. Considering the collision filtration on $H^i_c(F(X,n))$, the $p$-th graded quotient $\gr^F_p H^{i}_c(F(X,n))$ is nonzero only when $i= dn- (d-1)p$ or $i=d(n-1) - (d-1)p$.

    In particular, when $d\geq 3$ non-zero graded pieces $\gr^F_p H^{i}_c(F(X, n))$ appear in distinct cohomological degrees $i$, so there are no non-trivial extensions between the graded pieces.
\end{cor}
\begin{proof}
From Lemma \ref{prop:vanishing coefficients} the graded quotient $\gr^F_p H_c^{p+*}(F(X,n))$ is polynomial with coefficients $\Phi^p[n,m]\neq 0$ only for $p+m\in \{n-1,n\}$. Since $W:= \tilde{H}^*(X)$ is concentrated in grading $q=d$, the summand
\[
\Phi^p[n,m]\otimes_{\SymGroup[m]} W^{\otimes m}
\]
is concentrated in grading $q=dm$. It follows that $\gr^p H^{p+*}(F(X,n))$ is nontrivial only in grading $p+dm$ where $p+m\in \{n-1,n\}$. Setting $i=p+dm$ and substituting $m \in \{ n-1 - p,\ n-p \}$ gives the claim.

For the second part of the claim note that the equations $dn - (d-1)p = dn - (d-1)p'$ and $d(n-1) - (d-1)p = d(n-1) - (d-1)p'$ immediately give $p = p'$ when $d > 1$, while the equation $dn - (d-1)p = d(n-1) - (d-1)p'$ gives $d = (d-1)(p' - p)$, which has no integral solution for $d\geq 3$ since the last equation means that $d-1$ is a divisor of $d$, but in that case $d-1$ is greater than $1$ and coprime with $d$.
\end{proof}

We conclude this section by relating the collision filtration with more familiar filtrations defined on Hochschild--Pirashvili homology. Consider the contravariant functor from finitely generated free groups
    \begin{equation}
        F_g \mapsto H^*_c\left( F(\vee_{i=1}^g S^1,n) \right)
    \end{equation}
    discussed in \S\ref{sec:out-action}. Djament--Vespa \cite{DV15} define a filtration on functors of this sort, whose associated graded quotients factor through the abelianization $F_g\mapsto \Z^g$ and are polynomial in the classical sense. They call this the \emph{polynomial filtration}, and \cite[Theorem 17.8]{PV18} shows that it agrees with another natural filtration -- Pirashvili's so called \emph{Hodge} filtration on Hochschild--Pirashvili homology. Adapting the filtrations to the contravariant setting of interest here, we have the following.
    
\begin{cor}[Coincidence of filtrations]
    For $X$ a wedge of circles, the collision filtration on $H^*_c(F(X,n))$ coincides with the polynomial filtration of contravariant functors.
\end{cor}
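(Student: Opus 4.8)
The plan is to deduce the coincidence formally from the polynomial description of the graded pieces obtained in Theorem~\ref{thm:graded polynomiality} and Lemma~\ref{lem:vanishing coefficients}. Write $H := H^*_c(F(R_g,n))$, viewed as a functor of $g$; by the polynomiality theorem the whole collision spectral sequence, and hence the decreasing filtration $F^\bullet H$ it induces on the abutment, is functorial in $R_g$, i.e.\ in $\tilde H_*(R_g)=\Q^g$, and thus in $F_g$ along abelianization. Since $H = H^{n-1}_c\oplus H^n_c$ as functors and both the collision filtration and the Powell--Vespa polynomial filtration are compatible with finite direct sums, it suffices to treat $H^{n-1}_c$ and $H^n_c$ separately. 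For these, Lemma~\ref{lem:vanishing coefficients} together with the fact that $\tilde H^*(R_g)$ is concentrated in a single cohomological degree shows that $\gr^p H^n_c(F(R_g,n)) \cong \Phi^p[n,n-p]\otimes_{\SymGroup[n-p]}(\Q^g)^{\otimes(n-p)}$ is a \emph{homogeneous} polynomial functor of degree $n-p$, and likewise $\gr^p H^{n-1}_c(F(R_g,n))$ is homogeneous of degree $n-p-1$.

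Next I would record two standard facts about polynomial functors over $\Q$. First, the full subcategory of functors of polynomial degree $\leq d$ is closed under subobjects, quotients and extensions; consequently every functor $G$ has a largest degree-$\leq d$ subfunctor $p_d G$, and the increasing filtration $\{p_d G\}_d$ is exactly the polynomial filtration of \cite{PV18}. Second, in characteristic zero the category of polynomial functors splits as a product over homogeneous degrees, so every nonzero subfunctor of a homogeneous degree-$e$ functor is again homogeneous of degree $e$; in particular a homogeneous degree-$e$ functor has no nonzero subfunctor of degree $<e$. Combined with the elementary observation that in a short exact sequence $0\to A\to B\to C\to 0$ the object $B$ has no nonzero degree-$\leq d$ subfunctor as soon as $A$ and $C$ do not (a degree-$\leq d$ subfunctor of $B$ maps to one of $C$, hence to $0$, hence lands in $A$), this yields: an iterated extension of homogeneous functors all of degree $\geq d+1$ has no nonzero degree-$\leq d$ subfunctor.

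Finally, for the conclusion, fix $d$ and consider $H^n_c$. The subfunctor $F^{n-d}H^n_c$ is an iterated extension of the graded pieces $\gr^p H^n_c$ with $p\geq n-d$, each of degree $n-p\leq d$, so it is polynomial of degree $\leq d$ and therefore $F^{n-d}H^n_c\subseteq p_d H^n_c$. Conversely the quotient $H^n_c/F^{n-d}H^n_c$ is an iterated extension of the pieces $\gr^p H^n_c$ with $p<n-d$, each homogeneous of degree $n-p\geq d+1$, so by the previous paragraph it contains no nonzero degree-$\leq d$ subfunctor; hence the composite $p_d H^n_c\hookrightarrow H^n_c\twoheadrightarrow H^n_c/F^{n-d}H^n_c$ vanishes and $p_d H^n_c\subseteq F^{n-d}H^n_c$. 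This gives $p_d H^n_c=F^{n-d}H^n_c$, and the same argument with $n-d$ replaced by $n-1-d$ gives $p_d H^{n-1}_c=F^{n-1-d}H^{n-1}_c$. Reassembling the two cohomological degrees shows that the collision filtration and the polynomial filtration on $H^*_c(F(R_g,n))$ coincide, after the evident reindexing depending on the cohomological degree.

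The one genuinely delicate point — and the place where I would lean on \cite{PV18} — is the bookkeeping that identifies the Powell--Vespa polynomial filtration on functors on the category of finitely generated free groups with the homogeneous decomposition of analytic functors on vector spaces used above: one needs that the graded pieces factor through abelianization and that the polynomial degree is unchanged under this passage, which is essentially the content of \cite[Remark~6.10]{PV18}. I would also emphasise that the argument must be run functorially in $g$ for all genera simultaneously: for a single small $g$ the Schur functors $\mathbb{S}_\lambda(\Q^g)$ with $\ell(\lambda)>g$ vanish and both filtrations degenerate, so nothing about their coincidence could be extracted from one genus alone.
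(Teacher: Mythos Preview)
Your argument is correct and takes essentially the same approach as the paper: both rely on Powell--Vespa's \cite[Remark~6.10]{PV18} as the key input, and the paper in fact records nothing beyond that citation. You have simply spelled out the standard subfunctor/extension reasoning---that a filtration whose graded pieces are homogeneous polynomial functors of pairwise distinct degrees must agree (after the cohomological-degree-dependent reindexing you note) with the polynomial filtration---which the paper leaves implicit.
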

\begin{proof}
The collision spectral sequence along with Theorem \ref{thm:graded polynomiality} give a natural isomorphism $\gr_p^F H^i_c(F(X,n)) \cong \oplus_m \Phi^p[n,m]\otimes_{\SymGroup[m]}\left[\tilde{H}^*(X)^{\otimes m}\right]^{i-p} $, where $[W]^{q}$ is the $q$-graded part of a graded vector space $W$. Indeed $\Phi^p[n,m]$ is concentrated in grading $p$, so $\tilde{H}^*(X)^{\otimes m}$ must contribute to grading $i-p$. Since $\tilde{H}^*(X)$ is concentrated in grading $1$, it means that only $m=i-p$ contributes non-trivially.

In other words, for every $i$ the functor $H^i_c(F(X,n))$ is filtered by the collision filtration, and the $p$-th graded factor is a homogeneous polynomial functor of degree $i-p$. The dual version for contravariant functors of \cite[Remark 6.10]{PV18} is stating exactly that such a filtration is unique, and it is the polynomial filtration. 
\end{proof}

\subsection{Schur functor multiplicity}
One can make sense of the polynomial functor structure of $\gr H^*_c(F(X,n))$ geometrically using the following fact. Let $[X,Y]$ denote the set of homotopy classes of maps between topological spaces $X$ and $Y$.

\begin{lem}
    Let $X$ and $Y$ be wedges of $g$ and $h$ spheres, respectively, all of equal dimension $d$. Then the operations induced by $[X,Y]$ on homology give a surjection onto the integer matrix space 
    \[
     \Hom_{\Z}(H_d(X;\Z),H_d(Y;\Z))\cong M_{g\times h}(\Z).
    \]
    In particular, $[X,X]$ surjects onto the matrix ring $M_g(\Z)$.
\end{lem}

\begin{proof}
    When dealing with wedges of circles, the claim follows since wedges of circles are classifying spaces of free groups and the abelianization map $\Hom_{\bm{grp}}(F_g, F_h)\to M_{g\times h}(\Z)$ is surjective. For general $d$ one only needs to observe that $X$ and $Y$ are the reduced suspensions of corresponding wedge of circles. The $(d-1)$-fold suspension of an appropriate basepoint preserving map between wedges of circles realizes any prescribed homological action.
\end{proof}

From Corollary \ref{cor:factoring through homological action} the functor $X\mapsto \gr H^*_c(F(X,n))$ factors through $H^*(X)$. This uniquely characterizes the polynomial structure on $\gr H^*_c(F(X,n))$, as explained next.

\begin{cor}[\textbf{Uniqueness}]
Let $\Psi'(\n,-)$ be any polynomial functor on graded vector spaces, such that its composition with reduced cohomology $X\mapsto \tilde{H}^*(X)$ admits a natural isomorphism
\[
\Psi'(\n,\tilde{H}^*(X)) \cong \gr^p H^{p+*}_c(F(X,n))
\]
as functors out of the full subcategory of finite wedges of spheres.  Then $\Psi'(\n,-)\cong \Psi^p(\n,-)$, the functor from Theorem \ref{thm:graded polynomiality}. In fact, it is already uniquely determined by its restriction to the subcategory of wedges of $d$-dimensional spheres for any fixed $d\geq 1$.

\end{cor}
\begin{proof}
    By naturality in $X$, there is a natural isomorphism $\Psi^p(\n,\tilde{H}^*(-))\cong \Psi'(\n,\tilde{H}^*(-))$. In particular, the two functors agree on every homomorphism $H^*(X)\to H^*(Y)$ induced by a map of spaces. But since the previous lemma shows that these include all integrally defined homomorphisms, and since the integer lattice is Zariski dense in the space of all linear maps, the two polynomial functors must agree on all linear maps.
\end{proof}

A structural consequence of the polynomiality of $\gr H^*_c(F(X,n))$ is that it factors into Schur functors, as discussed in \S\ref{sec:prelim polynomial functors}. Explicitly,
\begin{equation}\label{eq:schur decomposition}
    \gr^p H^{p+*}_c(F(X,n)) \cong \bigoplus_{\lambda} \Phi^p[n,\lambda]\boxtimes \Schur{\lambda}\left( \tilde{H}^*(X) \right)
\end{equation}
where $\Phi^p[n,\lambda]$ is some $\SymGroup$-representation and $\Schur{\lambda}(-)$ is a Schur functor. Note that $\Phi[n,\lambda]$ is the $\Specht{\lambda}$-multiplicity space in the $\SymGroup[m]$-representation $\Phi[n,m]$ from the introduction.

\begin{defi}
The $\SymGroup$-representation $\Phi^p[n,\lambda]$ appearing in \eqref{eq:schur decomposition} is the \emph{equivariant multiplicity} of the Schur functor $\Schur{\lambda}$ in $\gr^p H^{p+*}_c(F(X,n))$.
\end{defi}

Understanding the cohomology $\gr H^*_c(F(X,n))$ amounts to describing these equivariant multiplicities, e.g. giving their characters for all $p$.

\begin{observation}[\textbf{Genus bound principle}]\label{lem:genus bound}
    Let $\lambda\vdash m$ be a partition with $\ell$ parts. Then given $X = \bigvee_{i=1}^g S^d$, the $\SymGroup$-equivariant multiplicity of the Schur functor $\Schur{\lambda}(\tilde{H}(X))$ 
    under the natural action by the monoid of homotopy classes $[X,X]$ on $H_c^{*}(F(X, n))$ is independent of $g$ once $g\geq \ell$. In particular, this multiplicity could be read off from $X$ a wedge of exactly $\ell$ spheres.
\end{observation} 

\begin{proof}
    Let $\lambda$ be a partition with $\ell$ parts, that is $\lambda = (\lambda_1, \lambda_2,\ldots,\lambda_{\ell})$, and consider $X$ a wedge of $g\geq \ell$ spheres of dimension $d$. 
    
    
    Consider the surjection $[X,X] \onto \End(\Z^g)$ given by the homological action. The Schur functors $\Schur{\mu}(\Q^{g})$ for partitions $\mu$ with $\leq g$ parts are distinct nonzero irreducible representations of $\End(\Z^{g})$, differentiated e.g. by their characters on diagonal matrices: these are evaluations of the respective Schur polynomial $s_\mu(x_1,\ldots,x_g)$ at the diagonal entries. In particular, $\Schur{\lambda}(\tilde{H}^*(X))$ is a nonzero irreducible representation of $[X,X]$, distinct from all other Schur functors appearing in Decomposition \eqref{eq:schur decomposition}. Thus by Schur's lemma, there is an $\SymGroup$-equivariant isomorphism
    \begin{equation}
        \Phi^p[n,\lambda] \cong \Hom_{[X,X]}\left( \Schur{\lambda}(\tilde{H}^*(X)), \gr^p H^{p+*}_c(F(X,n)) \right)
    \end{equation}
    determining the equivariant multiplicity independently of $g$.
\end{proof}

Swapping even spheres for odd spheres has the effect of conjugating the partition up to grading shifts
\begin{equation}
    \Schur{\lambda}(\tilde{H}^*(X)) \cong \begin{cases}
    \Schur{\lambda}(\Q^g) & \text{ if } X = \bigvee_{i=1}^g S^{2d} \\
    \Schur{\TransposePartition{\lambda}}(\Q^g) & \text{ if } X = \bigvee_{i=1}^g S^{2d+1}.
    \end{cases}
\end{equation}
Thus the notion of `simple' partitions, detectable by wedges of few spheres, includes partitions $\lambda$ such that either $\lambda$ or its conjugate $\TransposePartition{\lambda}$ have few parts.

\begin{cor}\label{cor:equivariant multiplicities}
Let $X$ be a wedge of $g$ spheres of dimension $d$ and fix a partition $\lambda$. Then under the $M_g(\Z)$-action induced by $[X,X]$ on homology, the $\SymGroup$-equivariant multiplicity $\Hom_{M_g(\Z)}\left( \Schur{\lambda}(\Q^g), \gr^p H^{p+*}_c(F(X,n)) \right)$ is isomorphic to
\begin{equation}
    \begin{cases}
    \Phi^p[n,\lambda] & d\text{ even and  $\lambda$ has $\leq g$ parts} \\
    \Phi^p[n,\TransposePartition{\lambda}] & d\text{ odd and  $\TransposePartition{\lambda}$ has $\leq g$ parts} \\
    0 & \text{otherwise}.
    \end{cases}
\end{equation}
\end{cor}

\begin{rmk}
Proposition \ref{prop:vanishing coefficients} above showed that $\Phi^p[n,\lambda] \neq 0$ only if $p+|\lambda| \in \{n-1,n\}$. In other words, the only equivariant multiplicities to compute are the ones with $p=n-1-|\lambda|$ and $n-|\lambda|$.
\end{rmk}

The first examples of `simple' partitions are $\lambda = (m)$ and $(1^m)$. We utilize the principle thus outlined in the next section.

\subsection{Symmetric and alternating powers}\label{sec:multiplicity symmetric powers}
We now compute the $\SymGroup$-equivariant multiplicity of $\Sym^m(-)$ and $\Lambda^m(-)$ occurring in $\gr^p H_c^{p+*}(F(-,n))$. The determination are given in terms of other geometric objects whose homology is well-understood, and they will prove Theorem \ref{thm:symmetric and alternating powers}.

Since symmetric and alternating powers are $\Schur{\lambda}$ for partition $\lambda$ with only one row or column, their equivariant multiplicity in $\gr H^*_c(F(X,n))$ are determined by configurations on a single sphere. These spaces have been studied extensively, most notably \cite{FZ00} computed the integral cohomology rings of $F(S^d,n)$ for all $(d,n)$.

Our calculations for $\Sym^m$ and $\Lambda^m$ follow a very similar pattern: projecting configurations on a sphere to a moduli space of such configurations. Let us begin with the simpler case of alternating powers.

\begin{prop}[\textbf{Alternating powers}]\label{prop:alternating powers}
    For the partition $\TransposePartition{\lambda} = (1^m)$ the Schur functor $\Schur{\TransposePartition{\lambda}} = \Lambda^m$. The equivariant multiplicity $\Phi^p[n,(1^m)]$ of $\Lambda^m$ in the functor $\Psi^p(\n,-)$ is
\begin{equation}
    \cong
    \begin{cases}
    H_{2(n-m)}(F(\R^3,n-1))\otimes \sgn & \text{if }p=n-m \\
    H_{2(n-m-1)}(F(\R^3,n-1))\otimes \sgn & \text{if }p=n-m-1 \\
    0 & \text{otherwise},
    \end{cases}
\end{equation}

where $\SymGroup$ acts on $F(\R^3,n-1)$ via the identification with $F(SU(2),n)/SU(2)$.
\end{prop}
These $\SymGroup$-representations are the Whitehouse modules up to sign, see \cite{ER19}.
\begin{proof}
    Let $\lambda = (m)$ so that $\TransposePartition{\lambda}=(1^m)$ and consider configurations on $S^3$. Since $\lambda$ has only one part, Corollary \ref{cor:equivariant multiplicities} identifies $\Phi^p[n,\TransposePartition{\lambda}]$ as the equivariant multiplicity of $\Sym^m(\Q)$ in $\gr^p H^{p+*}_c(F(S^3,n))$ as a representation of $\pi_0(\EndSpace{S^3})\cong \mathbb{Z}$. Here $a\in \mathbb{Z}$ acts on $\Sym^m(\Q)$ as multiplication by $a^m$. Thus we proceed by computing $H^*_c(F(S^3,n))$.
    
    First, since $F(S^3,n)$ is a manifold, Poincar\'{e} duality gives an $\SymGroup$-equivariant isomorphism
    \begin{equation}
        H^{3n-i}_c(F(S^3,n))\otimes \sgn \cong H_{i}(F(S^3,n))
    \end{equation}
    where the additional sign comes from the induced $\SymGroup$-action on the orientation bundle of $(S^3)^n$. 
    
    The cohomology of $F(S^3,n)$ was computed in \cite{FZ00}, but we seek a different description. Thinking of $S^3$ as the group $SU(2)$, the quotient by the diagonal action
    \begin{equation}
        F(SU(2),n) \to F(SU(2),n)/SU(2)
    \end{equation}
    is a trivial $SU(2)$-principal bundle. This is furthermore an $\SymGroup$-equivariant map. Since $SU(2)$ has homology only in degrees $0$ and $3$, there is an $\SymGroup$-equivariant isomorphism\footnote{For $\SymGroup$-equivariance one also needs to observe that the $\SymGroup$-action on the $SU(2)$ fibers is homologically trivial. Indeed, one can check that the $\SymGroup$-action commutes with the left $SU(2)$ action. Thus it must act on every copy of $SU(2)$ as right multiplication by a fixed matrix. But since $SU(2)$ is connected, such multiplication is homotopically trivial.}
    \begin{align}\label{eq:splitting configs on S^3}
    \begin{split}
        H_i(F(S^3,n)) \quad \cong \quad & H_i(F(SU(2),n)/SU(2)) \\ & \oplus  H_{i-3}(F(SU(2),n)/SU(2)).
    \end{split}
    \end{align}

    Via the identification $SU(2)\setminus \{1\} \cong S^3\setminus \{N\} \cong \R^3$ one gets a homeomorphism $F(SU(2),n)/SU(2) \cong F(\R^3,n-1)$ by mapping
    \[
    (x_1,\ldots,x_n) \mapsto (x_1^{-1}x_2, \ldots, x_1^{-1}x_n).
    \]
    It is furthermore well-known that, for $n\geq 2$, $H_*(F(\R^3,n-1))$ is concentrated in grading $2k$ for $0\leq k \leq n-2$ (see \cite[Lemma 6.2]{Cohen76}). Therefore the homology $H_*(F(S^3,n)))$ coincides with $H_*(F(\R^3,n-1))$ in even degrees, and with $H_{*-3}(F(\R^3,n-1))$ in odd ones. Under Poincar\'{e} duality these become $H^{3n-*}_c$ in even and odd codimension respectively.
    
    Let us match \eqref{eq:splitting configs on S^3} with the collision filtration. By Proposition \ref{prop:vanishing coefficients} the only nontrivial terms in $\gr^p H^{p+*}_c(F(S^3,n))$ are those in grading $*=3q$ where $p+q\in \{n-1,n\}$. If $p+q=n$ then the term in degree $p+3q$ has even codimension $3n-(p+3q)=2(n-q)$, and similarly $p+q=n-1$ implies odd codimension $2(n-q)+1$. Thus by parity of dimensions the collision filtration has no extensions:
    \begin{equation}
        H^{3n-i}_c(F(S^3,n)) = \begin{cases}
        \gr^{k} H^{3n-2k}_c &\text{if $i=2k$ even} \\
        \gr^{k-1} H^{3n-2k-1}_c &\text{if $i=2k+1$ odd}
    \end{cases}
    \end{equation}
    
    Now recall that $\gr^p H^{p+*}_c(F(S^3,n)) \cong \oplus_m \Phi^p[n,(1^m)]\boxtimes \Sym^m(\Q)$ where the $m$-th symmetric power has grading $*=3m$. Considering the case of even codimension first, with $k=p$ and $3m = * = 3n-3p$ we have an equivariant isomorphism
    \[
    \gr^p H_c^{3n-2p} = H_c^{3n-2p} \cong H_{2p}(F(\R^3,n-1))\otimes \sgn
    \]
    which equivariantly identifies the degree $m=n-p$ summand
    \begin{equation}
    \Phi^p[n,(1^{n-p})]\boxtimes \Sym^{n-p}(\Q) \cong H_{2p}(F(\R^3,n-1))\otimes \sgn.
    \end{equation}
    Considered as $\SymGroup$-representations, the term $\Sym^{n-p}(\Q)$ is a trivial $1$-dimensional representation, and thus the first case of the proposition follows.   
    
    The odd codimension case is similar: taking $p=k$ and grading $3m = * = 3n-3p$ we have
    \[
    \gr^{p-1} H^{3n-2p-1}_c = H^{3n-2p-1}_c \cong H_{2(p-1)}(F(\R^3,n-1))\otimes\sgn.
    \]
    This gives the equivariant identification in degree $m=n-p$
    \begin{equation}
        \Phi^{p-1}[n,(1^{n-p})]\boxtimes \Sym^{n-p}(\Q) \cong H_{2(p-1)}(F(\R^3,n-1))\otimes \sgn
    \end{equation}
    which produces the second case of the theorem. The remaining cases vanish due to Proposition \ref{prop:vanishing coefficients}.
\end{proof}

Let us now consider symmetric powers. In this case the collision filtration exhibits an extension, and we identify its terms using Deligne's theory of weights.

\begin{prop}[\textbf{Symmetric powers}]\label{prop:symmetric powers}
    Let $n\geq 3$. For the partition $\lambda = (m)$ the Schur functor $\Schur{\lambda} = \Sym^m$. The equivariant multiplicity $\Phi^p[n,(m)]$ of $\Sym^m$ in the functor $\Psi^p(\n,-)$ is
\begin{equation}
    \cong
    \begin{cases} 
    H_{n-m}(\ModuliCurve{0,n}) & \text{if }p=n-m \\
    H_{n-m-2}(\ModuliCurve{0,n}) & \text{if }p=n-m-1 \\
    0 & \text{otherwise},
    \end{cases}
\end{equation}
where $\ModuliCurve{0,n}$ is the moduli space of genus $0$ algebraic curves with $n$ marked points.
\end{prop}

\begin{proof}
Proceeding as in the previous case, Corollary \ref{cor:equivariant multiplicities} identifies $\Psi^p[n,(m)]$ with the multiplicity of $\Sym^m(\Q)$ in $\gr^p H^{p+*}_c(F(S^2,n))$ as a representation of the group $\pi_0(\EndSpace{S^2})\cong \Z$. We therefore seek to understand $H^*_c(F(S^2,n))$.

Since $F(S^2,n)$ is a manifold, Poincar\'{e} duality gives an $\SymGroup$-equivariant isomorphism
\[
H^{2n-*}_c(F(S^2,n)) \cong H_{*}(F(S^2,n)).
\]
Thinking of $S^2$ as $\mathbb{C}P^1$, the group $PSL_2(\mathbb{C})$ acts 3-transitively and \cite[Theorem 2.1]{FZ00} shows that the quotient map
\[
F(\mathbb{C}P^1,n) \to \ModuliCurve{0,n}
\]
is a trivial $PSL_2(\mathbb{C})$-principal bundle. This projection is clearly $\SymGroup$-equivariant. Moreover, all spaces and maps involved are algebraic, so their homology is equipped with a natural Hodge structure.

Since the rational cohomology of $PSL_2(\mathbb{C})$ is the same as that of $S^3 \simeq \mathbb{C}^2\setminus \{0\}$, by the same argument as in the case of alternating powers there is an $\SymGroup$-equivariant isomorphism
\begin{equation}\label{eq:splitting configs on S^2}
    H_i(F(S^2,n)) \cong H_i(\ModuliCurve{0,n}) \oplus H_{i-3}(\ModuliCurve{0,n})\otimes \Q(-2).
\end{equation}
It is also known that $H_i(\ModuliCurve{0,n})$ is pure of weight $2i$ as the complement of a hyperplane arrangement. Overall it follows that $H_i(F(S^2,n))$ is mixed of weights $2i$ and $2i-2$.

We want to compare \eqref{eq:splitting configs on S^2} with the collision filtration. By Proposition \ref{prop:vanishing coefficients} there is an extension
\begin{equation}
    0 \to \gr_i^F H^{2n-i}_c(F(S^2,n)) \to H^{2n-i}_c(F(S^2,n)) \to \gr_{i-2}^F H^{2n-i}_c(F(S^2,n)) \to 0
\end{equation}
and we wish to match it with the one in \eqref{eq:splitting configs on S^2}. It suffices to show that $\gr_i^F H^{2n-i}_c(F(S^2,n))$ is pure of weight $2n-2i$ and $\gr_{i-2}^F H^{2n-i}_c(F(S^2,n))$ is pure of weight $2n-(2i-2)$, for then under Poincar\'{e} duality they must match $H_i(\ModuliCurve{0,n})$ and $H_{i-3}(\ModuliCurve{0,n})\otimes \Q(-2)$ respectively.

Petersen showed in \cite[\S3.2]{Pet17} that the collision spectral sequence is compatible with Hodge structures, thus it suffices to show the claimed purity on the $E_1$ page. We will prove generally that $E_1^{p,2q}$ has pure weight $2q$. Indeed, Proposition \ref{prop:spectral_sequence_unidim} shows that $E_1^{p,2q}$ is a sum of $H^2(\mathbb{C}P^1)^{\otimes q}$, and thus has the claimed pure weight.

Now $\gr_p^F H^{p+*}_c(F(S^2,n)) \cong \oplus_{m} \Phi^p[n,(m)]\boxtimes \Sym^m(\Q)$ where the $m$-th symmetric power has grading $2m$. Together with $\gr_i^F H^{2n-i}_c(F(S^2,n)) \cong H_{i}(\ModuliCurve{0,n})$ it follows that for $i=p$ and $2m = * =2n-2p$ we have an isomorphism
\begin{equation}\label{eq:sym = M_0n}
\Phi^p[n,(n-p)] \boxtimes \Sym^{n-p}(\Q) \cong H_{p}(\ModuliCurve{0,n}).
\end{equation}
The first case of the theorem follows.

The same argument applied to $\gr_{i-2}^F H^{2n-i}_c(F(S^2,n)) \cong H_{i-3}(\ModuliCurve{0,n})\otimes \Q(-2)$ gives the second case. And the vanishing in all other cases follows from Proposition $\ref{prop:vanishing coefficients}$.
\end{proof}

\begin{rmk}[\textbf{$\End(\mathbb{P}^1)$-action on $\ModuliCurve{0,n}$}]
A consequence of the calculation in the proof is a description of the $\pi_0(\End(\mathbb{P}^1))$-action on $H_*(\ModuliCurve{0,n})$, defined via Poincar\'{e} duality $H_*(\ModuliCurve{0,n}) \cong H^*_c(\ModuliCurve{0,n})$ with $\End(\mathbb{P}^1)$ acting on the cohomology.
Explicitly, \eqref{eq:sym = M_0n} shows that a degree $k$ map on $\mathbb{P}^1$ acts on $H_p(\ModuliCurve{0,n})$ as multiplication by $k^{n-p}$.
\end{rmk}

\subsubsection{Explicit characters} \label{sec:characters of M0n and FR3n}
The $\SymGroup$-representations arising in the previous section can be understood combinatorially: let us recall their characters. 
For every $\SymGroup$-representation $W$, let $\operatorname{ch}_n(W)\in \Lambda$ denote the Frobenius characteristic of $W$,
\[
\operatorname{ch}_n(W) := \sum_{\lambda\vdash n} \frac{\chi_W(\lambda)}{z_\lambda}p_{\lambda}
\]
where $p_\lambda$ are the power-sum symmetric functions, $\chi_W(\lambda)$ is the character value of $W$ on a permutation of cycle type $\lambda$, and $z_\lambda = \prod i^{m_i} m_i!$ for $\lambda = (1^{m_1},2^{m_2},\ldots,n^{m_n})$.

The equivariant Poincaré polynomial for $H_{\ast}(\ModuliCurve{0,n})$ was computed by Getzler in \cite[Theorem 5.7]{Get95}, where he gave the following formula in the ring of symmetric functions. Below, raising one symmetric function to the power of another $f^g$ is interpreted using the plethystic exponential and logarithm of Getzler--Kapranov \cite{GK-modular-operad}
\[
f^g := \operatorname{Exp}(g \cdot \operatorname{Log}(f)).
\]
\begin{prop}
    The characters of the $\SymGroup$-representations $H_i(\ModuliCurve{0,n})$ are encoded by the generating function 
    $\operatorname{Ch}_t(\mathbf{m}) = \sum_{n=3}^{\infty}\sum_{i=0}^{\infty}{(-t)^i ch_n(H_i(\ModuliCurve{0,n}))}$ given by
    \begin{equation}\label{eq:character-M0n}
        \operatorname{Ch}_t(\mathbf{m}) = \kappa\frac{1+tp_1}{1-t^2}\prod_{n=1}^{\infty}{(1 + t^np_n)^{R_n(t)}}
    \end{equation}
    where $p_i=(x_1^i+x_2^i+\ldots)$ denote the power-sum symmetric functions, $\kappa$ is a truncation operator sending the monomials $1$, $p_1$, $p_{1}^2$ and $p_2$ to zero while fixing the other monomials, and
    \[
        R_n(t) = \frac{1}{n}\sum_{d\mid n}{\frac{\mu(n/d)}{t^d}}
    \]
    with $\mu$ the Möbius function.
\end{prop}
The character of $H_i(\ModuliCurve{0,n})$ is thus obtained from the degree $n$ part of the symmetric function appearing as coefficient for $t^i$ in \eqref{eq:character-M0n}.

A formula for the character of the $\SymGroup$-representation $H_i(F(\R^3,n-1))$ was given by Pagaria very recently.
\begin{thm}[{\cite[Corollary 4.11]{pagaria}}]
The characters of the $\SymGroup$-representations $H_i(F(\R^3,n-1))$ are encoded by 
    $\operatorname{Ch}_t(\mathbf{w}) = \sum_{n=1}^{\infty}\sum_{i=0}^{\infty}{t^i ch_n(H_{2i}(F(\R^3,n-1)))}$ where
    \begin{equation}
    \operatorname{Ch}_t(\mathbf{w}) = \frac{1}{1-t}\left( (1-t\cdot p_1)^{\frac{t-1}{t}} -1\right).
    \end{equation}
\end{thm}

\subsection{A lower bound}\label{sec:lower-bound}

The Euler characteristic of the collision spectral sequence \eqref{eq:chevalley-eilenberg} gives a concrete lower bound on $\gr H^*_c(F(X,n))$. Let us focus on configurations on wedges of circles: $X = \bigvee_{i=1}^g S^1$. In this section we will use the shorthand $H_c^*$ to denote $H_c^*(F(X, n))$. 

\begin{lem}[Lower bound]
    Decompose the terms of the Chevalley--Eilenberg complex of the twisted Lie algebra $(\Q\oplus V[-1])\otimes \Suspension\Lie$ as
    \begin{equation}
    \Sym^p\left((\Q\oplus V[-1])\otimes \Suspension\Lie[1]\right) (n) \cong \bigoplus_{q= 1}^n
    M^{p,q} \otimes_{\SymGroup[q]}V^{\otimes q}
\end{equation}
for $\SymGroup \curvearrowright M^{p,q} \curvearrowleft \SymGroup[q]$ explicitly computable (though complicated) $\SymGroup\times\SymGroup[q]$-bimodules.

Fix $q\geq 1$ and suppose the irreducible $\Specht{\lambda}\boxtimes\Specht{\mu}$ appears in the virtual representation
\[ \sum_{p=0}^{n-q} (-1)^{p+q} M^{p,q}
\]
with (signed) multiplicity $m$. When $m$ has sign $(-1)^n$, the top cohomology $\gr H^n_c$ contain at least $|m|$ copies of $\Specht{\lambda}\otimes\Schur{\mu}(\tilde{H}^1(X))$. Otherwise, $\gr H^{n-1}_c$ contains at least $|m|$ copies of $\Specht{\lambda}\otimes\Schur{\mu}(\tilde{H}^1(X))$.
\end{lem}
Explicit computations are easily carried out on a home computer. Some consequences are given below.
\begin{proof}
    The $E_1$-page of the collision spectral sequence \eqref{eq:chevalley-eilenberg} is given by the Chevalley--Eilenberg complex on $H^*(X)\otimes \Suspension\Lie$, and $H^*(X) = \Q \oplus H^1(X)[-1]$. Since the Euler characteristic can be computed at every page of a spectral sequence, the degree $q$ polynomial subfunctor of  $(\gr H^{n}_c) - (\gr H^{n-1}_c)$ has the form
\begin{equation}\label{eq:alternating sum Mpq}
    \Big(\sum_{p=0}^{n-q} (-1)^{p+q} M^{p,q}\Big)\otimes_{\SymGroup[q]}\tilde{H}^1(X)^{\otimes q}.
\end{equation}
    As this Euler characteristic is in fact the difference between two genuine representations, the claim follows.
\end{proof}

\begin{rmk}
Together with the exact multiplicities of symmetric and alternating powers from \S\ref{sec:multiplicity symmetric powers}, the lower bound produced in this way has thus far proved to be a good approximation for the true cohomology. For example, up to $n=11$ particles, the ranks our estimates produce capture $\sim 90\%$ of the cohomology.
\end{rmk}

Forgetting the $\SymGroup$-action, the lower bound gives the following explicit estimate on the (nonequivariant) multiplicity of every Schur functor.

\begin{prop}
Let $\lambda\vdash q$ be any partition, then the Schur functor $\Schur{\lambda}$ appears in $(\gr H^n_c) - (\gr H^{n-1}_c)$ with nonequivariant multiplicity
    \[
    (-1)^{n-1}\big(|s(n-1,q)|-|s(n-1,q-1)|\big) \dim(\Specht{\lambda})
    \]
    where $s(n,q)$ is a Stirling number of the first kind and $\dim(\Specht{\lambda})$ has a combinatorial description as the number of standard Young tableaux of shape $\lambda$, equivalently given by the hook-length formula.
\end{prop}

\begin{proof}
    In Proposition \ref{prop:spectral_sequence_unidim} we noted that $E_1^{p,q}$ consists of $|s(n,n-p)|\cdot \binom{n-p}{q}$ copies of the tensor power $T^q(\tilde{H}^1(X))$. By Schur--Weyl duality, the tensor power decomposes as
    \[
    \bigoplus_{\lambda \vdash q} \Specht{\lambda}\boxtimes \Schur{\lambda}(\tilde{H}^1(X)).
    \]
    Thus the Schur functor $\Schur{\lambda}$ appears in the Euler characteristic with multiplicity
    \[
    \sum_{p=0}^{n-q} (-1)^{p+q}|s(n,n-p)|\cdot \binom{n-p}{q}\cdot \dim(\Specht{\lambda}).
    \]
    
    Recalling that the Stirling numbers are defined by the relation $(x)_n:= x(x-1)\ldots(x-n+1) = \sum s(n,p)x^p$, the above sums simplify by considering the generating function
    \[
    \sum_{p,q} (-1)^{p}|s(n,n-p)|\cdot \binom{n-p}{q} x^q = (x+1)_{n}.
    \]
    Comparing the $q$-th coefficients of $(x+1)_n = (x+1)\cdot (x)_{n-1}$ one arrives at the claimed multiplicity.
\end{proof}

\begin{expl}\label{super-exponential (2,1)}
The partition $(2,1)\vdash 3$ is the smallest one not accounted for in \S\ref{sec:multiplicity symmetric powers}. The number of copies of $\Schur{(2,1)}$ counted by the Euler characteristic is 
\begin{equation}
\begin{split}
&= (-1)^{n-1}2\left(|s(n-1,3)|-|s(n-1,2)|\right)\\
&= (-1)^{n-1}(n-2)!\left[ {\left(\log\frac{n}{e^{1-\gamma}}\right)}^2 - O(1) \right]
\end{split}
\end{equation}
as follows from the well-know comparison between Stirling numbers and harmonic sums (see e.g. \cite{adamchik-stirling}). Here $\gamma$ is the Euler–Mascheroni constant so that $e^{1-\gamma} \approx 1.5$. In particular the multiplicity of $\Schur{(2,1)}$ in $\gr H^{n-1}_c$ grows super exponentially in $n$.
\end{expl}

More generally, the estimate $|s(n-1,q)| \sim \frac{(n-2)!}{(q-1)!}(\log(n)+\gamma)^q$ from \cite{adamchik-stirling} shows that for every $\lambda$ the multiplicity of $\Schur{\lambda}$ in $(\gr H^{n-1}_c) - (\gr H^{n}_c)$ is at least on the order of $C(n-2)!\log(n/c)^q$ for some constants $C$ and $c$.

\begin{rmk}
    The last estimate is surprising. On the one hand it implies that Schur functors of low degree $|\lambda|$ are hugely more prevalent in the bottom cohomology $\gr H^{n-1}_c$ than in the top $\gr H^{n}_c$. On the other hand the presentation in \eqref{eq:2-step integral} shows that $\dim H^{n}_c - \dim H^{n-1}_c \sim (n!)^{n^g}$ so the top cohomology $H^{n}_c$ is much larger overall. One concludes that the top cohomology contains many more Schur functors of high degree.
    
    To be more explicit, an estimate of Erd\"{o}s \cite{Erdos53} gives that the numbers $|s(n,q)|$ are monotonically increasing in $q$ until $q \approx \log(n)$ and then they decrease. This means that $\Schur{\lambda}$ with $|\lambda|\lesssim\log(n)$ are more common in $\gr H^{n-1}_c$, but all remaining ones are more prevalent in $\gr H^n_c$. It further follows that every Schur functor with $|\lambda|\leq n$ does appear in $\gr H^*_c$, with only one exception for $\lambda=(2)$, since $\Sym^2$ does not appear in $H^*_c(F(X,3))$. 
    
\end{rmk}

\section{Applications}

\subsection{Weight \texorpdfstring{$0$}{0} cohomology of \texorpdfstring{$\ModuliCurve{2, n}$}{M2,n}}\label{sec:cohomology-M2n}
Our original motivation for studying the cohomology $H^*_c(F(X,n))$ is its connection with the cohomology of moduli spaces of algebraic curves $\ModuliCurve{g,n}$. This relationship comes from the description of the weight $0$ part of $H^*_c(\ModuliCurve{g,n})$ in terms of tropical geometry, and is manifested most explicitly in genus $g=2$ as given by the following theorem, which can be found under an equivalent formulation in \cite{BCGY21}.

\begin{thm}[{\cite[Theorem 1.2]{BCGY21}}]\label{thm:cohomology-tropical}
Fix $n \in \N$ and let $X = S^1\vee S^1$ be a wedge of two circles. There exists an $\SymGroup$-equivariant isomorphism
\begin{equation}\label{eq:BCGY}
    \gr_0^{W}H_c^{3+\ast}(\ModuliCurve{2, n}) = (H_c^{\ast}(F(X, n))\otimes \sgn[3])^{\SymGroup[2]\times\SymGroup[3]}.
\end{equation}
\end{thm}
In this formula, the $(\SymGroup[2]\times\SymGroup[3])$-action is the following. On $\sgn[3]$ it is the sign representation of $\SymGroup[3]$, with $\SymGroup[2]$ acting trivially. On $H_c^{\ast}(F(X,n))$ it factors through the action of $\Out(F_2)$ via a homomorphism $\SymGroup[2]\times\SymGroup[3]\to\Out(F_2)$. Using Nielsen's identification $\Out(F_2)\cong \GL_2(\Z)$ \cite{Nielsen1917}, the latter homomorphism is the $2$-dimensional representation $\sgn[2]\otimes \operatorname{std}_3$, where $\sgn[2]$ is the sign representation of the $\SymGroup[2]$ factor and $\operatorname{std}_3 = \Z^3/\langle(1,1,1)\rangle$ is the standard $2$-dimensional representation of the $\SymGroup[3]$ factor.


With this fact at hand we can proceed to prove Proposition \ref{prop:cohom-moduli-space} from the introduction.

\begin{proof}[Proof of Proposition \ref{prop:cohom-moduli-space}]
    By \cite{Nielsen1917}, abelianization gives $\Out(F_2)\cong \GL_2(\Z)$. Since Theorem \ref{thm:polynomiality} shows that the associated graded $\gr H^*_c(F(X,n))$ is a polynomial representation of $\GL_2(\Z)$, and since those are semi-simple, the collision filtration splits canonically so that $H^*_c(F(X,n)) \cong \gr H^*_c(F(X,n))$ as $\SymGroup\times\Out(F_2)$-representations.
    
    The dimension of $(H_c^{\ast}(F(X, n))\otimes \sgn[3])^{\SymGroup[2]\times\SymGroup[3]}$ can be computed as the scalar product of $\SymGroup[2]\times \SymGroup[3]$-characters
    \[
        \left\langle \operatorname{triv}, \ \Res_{\SymGroup[2]\times\SymGroup[3]}^{\GL_2(\Z)}{H_c^{\ast}(F(X, n))}\otimes \sgn[3]\right\rangle = \left\langle \sgn[3], \ \Res_{\SymGroup[2]\times\SymGroup[3]}^{\GL_2(\Z)}{H_c^{\ast}(F(X, n))}\right\rangle.
    \]
    To remember the $\SymGroup$-action one may think of this calculation taking place in the ring of virtual $\SymGroup[n]$-representations.

    Expand the polynomial functor $H^*_c(F(X,n))$ as in \eqref{eq:schur decomposition},
    \[
    H^*_c(F(X,n)) \cong \bigoplus_{\lambda} \Phi[n,\TransposePartition{\lambda}] \boxtimes \Schur{{\lambda}}(\Q^2)
    \]
    where each $\Phi[n,\TransposePartition{\lambda}]$ is an $\SymGroup$-representation. Here the conjugate partition $\TransposePartition{\lambda}$ appears since $\tilde{H}^*(X)\cong \Q^2$ is concentrated in odd grading.
    It follows that the above character inner product expands similarly,
    \begin{equation}
    \begin{split}
         &\left\langle  \sgn[3] , \ \sum_{\lambda}\Res_{\SymGroup[2]\times\SymGroup[3]}^{\GL_2(\Z)}{\Phi[n, \TransposePartition{\lambda}] 
         \boxtimes \Schur{{\lambda}}(\Q^2)} \right\rangle \\ & = \sum_{\lambda}\left\langle  \sgn[3], \ \Res_{\SymGroup[2]\times\SymGroup[3]}^{\GL_2(\Z)}{\Schur{{\lambda}}(\Q^2)}\right\rangle \cdot \Phi[n,\TransposePartition{\lambda}].
            \end{split}
    \end{equation}
    
    This implies the first part of Proposition \ref{prop:cohom-moduli-space}, identifying the multiplicity of the $\SymGroup$-irreducible $\Specht{\lambda}$ as
    \begin{equation}\label{value-a-lambda}
        r_{\lambda} = \left\langle  \Res_{\SymGroup[2]\times\SymGroup[3]}^{\GL_2(\Z)}{\Schur{\lambda}(\Q^2)} \ ;\ \sgn[3] \right\rangle. 
    \end{equation}
The values of the coefficients $r_\lambda$ are given by the following lemma.
\end{proof}

\begin{rmk}
    The formula in Proposition \ref{prop:cohom-moduli-space} first came to our knowledge via private communications with Dan Petersen, who derived this formula during ongoing work joint with Orsola Tommasi. Interestingly, in their work the coefficients $r_{\lambda}$ are defined as
    \[
        r_{\lambda} = \dim \gr_0^{W}H_c^3(\ModuliCurve{2}, \mathbb{V}_{\lambda})
    \]
    with $\mathbb{V}_{\lambda}$ the local system associated to the corresponding irreducible representation of the symplectic group $\operatorname{Sp}_4$.
    
    Due to the restriction on the coefficient $\Phi[n,n]$ given in Proposition \ref{prop:vanishing coefficients}, it follows that our definition of $r_{\lambda}$ agrees with theirs. We plan to investigate further this coincidence in future work. 
\end{rmk}

\begin{lem}
    The coefficients $r_\lambda$ from Proposition \ref{prop:cohom-moduli-space} are the following. For $\lambda = (a,b)$ with $a\geq b \geq 0$,
    \[
        r_{(a,b)} = \begin{cases}
        \left\lfloor{\frac{a-b}{6}}\right\rfloor +1 & \text{if }a\equiv_2 b \equiv_2 1 \\
        \left\lfloor{\frac{a-b}{6}}\right\rfloor & \text{if }a\equiv_2 b \equiv_2 0 \\
        0 & \text{if }a\not\equiv_2 b .
        \end{cases}
    \]
\end{lem}
\begin{proof}
    The group $\SymGroup[2]\times\SymGroup[3]$ is included in $\GL_2(\Z)$ by sending the transposition $(12)\in \SymGroup[2]$ to $-I_2$, transpositions in $\SymGroup[3]$ map to matrices with eigenvalues $(1,-1)$ and its $3$-cycles map to matrices with eigenvalues $(\zeta_3,\zeta_3^{-1})$ where $\zeta_3$ is a primitive $3$rd root of unity.
    
    The character of the Schur representation $\Schur{(a,b)}(\Q^2)$ is given by sending a matrix $A\in \GL_2(\Z)$ with eigenvalues $(x,y)$ to the Schur polynomial $s_{(a,b)}(x,y) = \sum_{i=b}^a x^i y^{a+b-i}$ (see \cite[\S I.3]{MacDonald}).
    One readily checks that transpositions in $\SymGroup[3]$ acts with trace equal to $(-1)^b$ times the residue of $(a-b+1)$ mod $2$, and $3$-cycles act with trace $(a-b+1)$ mod $3$ (taking $2$ mod $3$ to have residue $-1$).
    With these character values the scalar product with $\sgn[3]$ is as claimed.
\end{proof}

\begin{expl}[Multiplicity of $\Specht{(2,1^{n-2})}$ and $\Specht{(n-1,1)}$]\label{ex:BCGY conjecture}
Let us use calculations of bead representations to see that the $\SymGroup$-representation $\Specht{(2,1^{n-2})}\cong \operatorname{Std}_n\otimes \sgn$ never occurs in $\gr^W_0 H^{n+2}_c(\ModuliCurve{2,n})$, as conjectured in \cite[Conj. 3.5]{BCGY21}. Indeed, in Example \ref{ex:(2,1)-bead rep} we explained that Powell--Vespa effectively compute $\Specht{(2,1^{n-2})}$ to never occur in the cohomology $H^{n-1}_c(F(X,n))$. In light of Proposition \ref{prop:cohom-moduli-space}, this immediately implies that this irreducible representation does not appear in $\gr^W_0 H^{n+2}_c(\ModuliCurve{2,n})$.

The other half of the conjecture involves the standard representation $\Specht{(n-1,1)}$. After a preprint of the present paper was made public the multiplicity of this representation has been computed by Powell \cite{PowBabyBead}. Combining his result with Proposition \ref{prop:cohom-moduli-space} shows that $\gr^W_0 H^{n+3}_c(\ModuliCurve{2,n})$ contains this representation with the conjectured multiplicity. Since the equivariant Euler characteristic of $\ModuliCurve{2,n}$ is known it follows that $\gr^W_0 H^{n+2}_c(\ModuliCurve{2,n})$ also contains this representation with the conjectured multiplicity.
\end{expl}

Together with our computations of $\Phi[n,\lambda]$ for $n\leq 10$ below, Proposition \ref{prop:cohom-moduli-space} recovers the $\SymGroup$-character of $\gr_0^{W} H_c^*(\ModuliCurve{2,n})$. For $n=11$ we have computed the relevant summands of $\Phi[11,\lambda]$, and obtained the result shown below. All our computations agree with those present in \cite{BCGY21}.
\begin{align*}
    \gr_0^{W} H_c^{13}(&\ModuliCurve{2,11}, \Q) = 3\Specht{(9,1^2)} + 3\Specht{(8,3)} + 5\Specht{(8,2,1)} + 3\Specht{(8,1^3)} + 2\Specht{(7,4)} + 16\Specht{(7,3,1)} +\\ &5\Specht{(7,2^2)} + 16\Specht{(7,2,1^2)} + 2\Specht{(7,1^4)} + 4\Specht{(6,5)} + 15\Specht{(6,4,1)} + 23\Specht{(6,3,2)} +\\ &28\Specht{(6,3,1^2)} + 24\Specht{(6,2^2,1)} + 21\Specht{(6,2,1^3)} + 5\Specht{(6,1^5)} + 10\Specht{(5^2,1)} + 19\Specht{(5,4,2)} +\\ &28\Specht{(5,4,1^2)} + 21\Specht{(5,3^2)} + 50\Specht{(5,3,2,1)} + 28\Specht{(5,3,1^3)} + 13\Specht{(5,2^3)} + 38\Specht{(5,2^2,1^2)} +\\ &17\Specht{(5,2,1^4)} + 7\Specht{(5,1^6)} + 8\Specht{(4^2,3)} + 29\Specht{(4^2,2,1)} + 20\Specht{(4^2,1^3)} + 25\Specht{(4,3^2,1)} +\\ &28\Specht{(4,3,2^2)} + 48\Specht{(4,3,2,1^2)} + 22\Specht{(4,3,1^4)} + 22\Specht{(4,2^3,1)} + 25\Specht{(4,2^2,1^3)} +\\ &11\Specht{(4,2,1^5)} + 2\Specht{(4,1^7)} + 13\Specht{(3^3,2)} + 8\Specht{(3^3,1^2)} + 22\Specht{(3^2,2^2,1)} + 20\Specht{(3^2,2,1^3)} +\\ &11\Specht{(3^2,1^5)} + 4\Specht{(3,2^4)} + 15\Specht{(3,2^3,1^2)} + 8\Specht{(3,2^2,1^4)} + 6\Specht{(3,2,1^6)} +3\Specht{(2^5,1)} +\\ &4\Specht{(2^4,1^3)} + 2\Specht{(2^3,1^5)} + 2\Specht{(2^2,1^7)} + \Specht{(1^{11})},
\end{align*}
which has dimension $850732$.


For numbers $n$ larger still, the super-exponential multiplicity of every Schur functor discussed in Example \ref{super-exponential (2,1)} implies similar growth in $\gr^W_0 H^*_c(\ModuliCurve{2,n})$\footnote{The $\SymGroup$-equivariant Euler characteristic of these representations is computed in \cite{CFGP}, and gives a lower bound on multiplicity of all $\Specht{\lambda}$ in cohomology. Our lower bound is orthogonal, and shows the codimension $1$ cohomology is much larger than can be deduced from \cite{CFGP}.}. 
The reader can find our equivariant lower bound for $\gr_0^{W} H_c^*(\ModuliCurve{2,n})$ presented on \href{https://louishainaut.github.io/GH-ConfSpace/}{this webpage}\footnote{\url{https://louishainaut.github.io/GH-ConfSpace/}}. Forgetting the $\SymGroup$-action, a lower bound on the dimension of the cohomology $\gr^W_0 H^{n+2}_c(\ModuliCurve{2,n})$ for all $n\leq 17$ are listed in Table \ref{tab:ranks-M2n}.

\begin{table}[h]
    \centering
    \begin{tabular}{c|c|c|c|c|c|c|c|c|c|c|c}
        $n$ & $0$ & $1$ & $2$ & $3$ & $4$ & $5$ & $6$ & $7$ & $8$ & $9$ & $10$ \\
        \hline
        $\dim$ & $0$ & $0$ & $0$ & $0$ & $1$ & $5$ & $26$ & $155$ & $1066$ & $8666$ & $81012$
    \end{tabular}
    \begin{tabular}{c|c|c|c|c}
        $n$ & $11$ & $12$ & $13$ & $14$ \\
        \hline
        $\dim$ & $850\,732$ & $\geq 7\,920\,155$ & $\geq 94\,325\,925$ & $\geq 1\,220\,494\,146$ 
    \end{tabular}
    \begin{tabular}{c|c|c|c}
        $n$ & $15$ & $16$ & $17$ \\
        \hline
        $\dim$ & $\geq 17\,048\,375\,436$ & $\geq 255\,669\,776\,040$ & $\geq 4\,096\,729\,778\,379$  
    \end{tabular}
    \caption{Dimension of $\gr_0^{W} H_c^{n+2}(\ModuliCurve{2,n}, \Q)$}
    \label{tab:ranks-M2n}
\end{table}

\subsection{Patterns and conjectures}\label{sec:conjectures patterns}

We start this section by proving the fourth statement of Proposition \ref{prop:vanishing coefficients},
\begin{prop}
    For a wedge of circles $X = \bigvee_{i=1}^g S^1$,
    \[
        \gr_0 H_c^{n-1}(F(X, n)) = \Specht{(1^n)}\boxtimes \Sym^{n-1}(\Q^g).
    \]
    Equivalently, the coefficient $\Phi^0[n,n-1]$ of the polynomial decomposition is
    \[
        \Phi^0[n, n-1] = \Specht{(1^n)}\boxtimes \Specht{{(1^{n-1})}}.
    \]
\end{prop}

\begin{proof}
    From Proposition \ref{prop:alternating powers} the equivariant multiplicity of the alternating power $\Lambda^{n-1}$ in $\Psi^0(\n, -)$ is 
    \[
        \Phi^0[n, {(1^{n-1})}] = H_0(F(\R^3, n-1))\otimes \sgn[n] = \sgn[n],
    \]
    so $\Phi^0[n, n-1]$ is at least as large as indicated, and it remains to prove that it does not contain any other terms.
    
    For this we look closely at the collision spectral sequence. Once again, it is enough to prove the statement when $X = \bigvee_{i=1}^g{S^1}$ is a wedge of sufficiently many circles. In fact $g\geq n-1$ is enough, but we may as well consider all $g$'s simultaneously. The polynomial representation $\Phi^0[n, n-1]\otimes_{\SymGroup[n-1]}{\Q^g[-1]}^{\otimes n-1}$ is by definition the kernel of the differential $d_1\colon E_1^{0,n-1}\to E_1^{1,n-1}$. The multiplicity of alternating powers implies that this kernel contains at least one copy of $\sgn[n]\boxtimes \Sym^{n-1}(\Q^g)[1-n]$, which has dimension $\binom{n+g-2}{n-1}$. We claim that the kernel of this differential cannot have larger dimension.
    
    A basis for $E_1^{0,n-1} = \Lambda^{n}\left(H^*(X)\otimes \Suspension\Lie(1)\right)^{n-1}$ is given by tuples
    \begin{equation*}
        \langle(x_1, \alpha_{i_1}), \ldots, (x_{k-1}, \alpha_{i_{k-1}}), (x_k, 1), (x_{k+1}, \alpha_{i_{k+1}}), \ldots, (x_n, \alpha_{i_{n}})\rangle
    \end{equation*}
    where the symbols $x_i$ are copies of the `Lie variable' in $\Suspension\Lie(1)\cong \Q x$, and the vectors $\alpha_1,\ldots,\alpha_g \in \tilde{H}^1(X)$ form a basis. Here $1$ refers to the unit $1\in H^0(X)$, and the indices are such that $1\leq k \leq n$ along with $1\leq i_1,\ldots, \widehat{i_k},\ldots, i_{n} \leq g$. We denote this basis element by $\mathcal{B}_{(i_1,\ldots, \widehat{i_k},\ldots, i_{n})}$.
    
    
    For $E_1^{1,n-1}
    \cong \bigoplus_{\binom{n}{2}}(H^*(X)\otimes \Suspension\Lie(2)) \bigotimes \Lambda^{n-2}\left( H^*(X)\otimes \Suspension\Lie(1) \right) ^{n-2}$ we work with the basis of tuples
    \begin{equation*}
        \langle([x_j, x_k], \alpha_{i_0}), (x_1, \alpha_{i_1}), \ldots, \widehat{(x_{j}, \alpha_{i_{j}})},\ldots, \widehat{(x_{k}, \alpha_{i_{k}})}, \ldots, (x_n, \alpha_{i_{n}})\rangle
    \end{equation*}
    for every pair $\{j,k\}$ and sequence of indices $(i_0,i_1,\ldots, \widehat{i_j},\ldots, \widehat{i_k},\ldots, i_n)$. Note that now all terms in the tuple contain a cohomology class $\alpha_i\in \tilde{H}^1(X)$. We denote this basis element by $\mathcal{C}_{(i_0,i_1,\ldots, \widehat{i}_j,\ldots, \widehat{i}_k,\ldots, i_n)}$.
    
    Now, the differential $d_1$ acts on our basis by
    \begin{equation}
        d_1: \mathcal{B}_{(i_1,\ldots, \widehat{i_k},\ldots, i_{n})} \longmapsto \sum_{j\neq k} \pm \mathcal{C}_{(i_j,i_1,\ldots, \widehat{i}_j,\ldots, \widehat{i}_k,\ldots, n)}
    \end{equation}
    and in particular all resulting basis elements appear with coefficient $\pm 1$. Let us consider the dual problem: which $\mathcal{B}_\mathbf{i}$'s are sent to a linear combination containing a nonzero multiple of a particular $c = \mathcal{C}_{(i_0,\ldots,\widehat{i}_j,\ldots,\widehat{i}_k,\ldots ,i_n)}$?
    
    From the description of $d_1$ there are exactly two basis elements with this property: $b_1 = \mathcal{B}_{(i_1,\ldots, i_0,\ldots,\widehat{i_k},\ldots, i_n)}$ and $b_2 = \mathcal{B}_{(i_1,\ldots, \widehat{i}_j,\ldots,i_0,\ldots, i_n)}$, with $i_0$ inserted in the $j$-th, resp. $k$-th, empty slot. It follows that every element of $\ker(d_1)$ that contains a nontrivial multiple of $b_1$ must also contain one of $b_2$, and the coefficient of one uniquely determines the coefficient of the other.
    
    Define an equivalence relation of the $\mathcal{B}_{\mathbf{i}}$'s, generated by the relation that $b_1\sim b_2$ if their indexing tuples differ by moving one $i_j$ of $b_1$ to the empty slot of $b_2$. Then the discussion in the previous paragraph shows that $\ker(d_1)$ is spanned by $\sim$-equivalence classes, and it remains to count these classes.
    
    It is straightforward to see that equivalence classes are in bijection with multisets $\langle i_1,\ldots,i_{n-1}\rangle$ with $1\leq i_j \leq g$ for all $j$. Thus there are exactly $\binom{n+g-2}{n-1}$ many equivalence classes, and the kernel has at most this dimension. This is what we wanted to show.
\end{proof}

\begin{rmk}
    Using the Euler characteristic as in \S\ref{sec:lower-bound}, one can obtain from the previous proposition a plethystic description of $\gr_1 H^n_c(F(X,n))$, equivalently $\Phi^1[n, n-1]$, identical to the one in \cite[Corollary 3]{PV18}.
\end{rmk}

Calculations for small values of $n$ also suggest the following pattern.
\begin{conj}
    For $n\geq 4$ and $X = \bigvee_g S^1$ a wedge of circles,
    \[
        \gr_1 H_c^{n-1}(F(X,n)) = \big(\Specht{(3,1^{n-3})}\boxtimes \Sym^{n-2}(\Q^g) \big) \oplus \big( \Specht{(n)}\boxtimes \Lambda^{n-2}(\Q^g)\big).
    \]
    Equivalently, the coefficients of $\Phi^1[n, n-2]$ are given by
    \begin{equation}
        \Phi^1[n, n-2] = (\Specht{(3, 1^{n-3})}\boxtimes \Specht{(1^{n-2})}) \oplus (\Specht{(n)}\boxtimes \Specht{(n-2)}).
    \end{equation}
\end{conj}

Up to this point we mainly discussed the $\SymGroup$-equivariant multiplicity of individual Schur functors in $\gr H^*_c(F(X,n))$, equivalently the coefficients $\Phi[-,\mu]$. Let us now shift perspective and instead consider for fixed $\SymGroup$-irreducible $\Specht{\lambda}$ the corresponding isotypic component as a sum of Schur functors. 
These are equivalently given by the coefficient $\Phi[\lambda, -]:= \oplus_{m=0}^{|\lambda|}\Phi[\lambda,m]$.

Through our calculations we have identified several patterns in these coefficients -- see below. We present our conjectural formulas only for multiplicities in codimension 1 cohomology of $F(\vee S^1,n)$, but using the spectral sequence to compute the Euler characteristic, one can readily compute the corresponding multiplicity in codimension 0 as well.


In some of the conjectural patterns below we use the notation 
\[
\MultTriv{n} = \sum_{2a+b = n}{\Schur{{(a, 1^b)}}}, \text{ a sum of hook shapes, }
\]
where the sum is over $a\geq 1$ and $b\geq 0$. The following can be deduced from \cite[Examples 3-4 and Corollary 19.8]{PV18} using the dictionary in \S\ref{sec:bead}.

\begin{prop}\label{prop:known-mult-PV}
For partition $\lambda\vdash n$, the equivariant multiplicity of $\Specht{\lambda}$ in $\gr H^{n-1}_c(F(X,n))$, equivalently the bead representation $U_\lambda^{II}$ from \cite[\S2.5]{TW19}, is the following polynomial functor.
\begin{itemize}
    \item For the trivial representation $\lambda = (n)$, it is $\MultTriv{n-1}$.
\item For the sign representation $\lambda = (1^n)$, it is $\Schur{(n-1)}$.
\item For $\lambda = (2,1^{n-2})$ it is $0$.
\end{itemize}
\end{prop}



We observed the following further patterns, verified computationally for up to $n\leq 11$ and compliant with the lower bound in \S\ref{sec:lower-bound}.
Any multiplicity involving $\Schur{\TransposePartition{(k)}}$ and $\Schur{\TransposePartition{(1^k)}}$ is completely determined by Theorem \ref{thm:symmetric and alternating powers}.

\begin{conj} \label{conj:patterns}
The equivariant multiplicity of $\Specht{\lambda}$ in $\gr H_c^{n-1}(F(X,n))$ are:
\begin{itemize}
    \item For $\lambda = (n-1, 1)$ it is $\begin{cases}
        \Schur{{(n/2)}} & \text{if } n \text{ is even} \\
        0 & \text{if } n \text{ is odd}
    \end{cases}$ (proved in Prop \ref{prop:Powell-baby-bead} below).
    
    \item For $\lambda = (n-2,2)$ it is the sum $\sum_{k=0}^{\lfloor{\frac{n}{4}}\rfloor - 1}{\MultTriv{n-2-4k}} + \sum_{k=0}^{\lfloor{\frac{n-7}{4}}\rfloor}{\MultTriv{n-5-4k}}$
    \[ + \left\{\begin{array}{ll}
        \sum_{k=0}^{\lfloor{\frac{n}{4}}\rfloor-2}\Schur{{\left(\frac{n-4}{2} - 2k\right)}} & \text{if } n \text{ is even} \\
        \sum_{k=0}^{\lfloor{\frac{n-5}{4}}\rfloor}\Schur{{\left(\frac{n-1}{2} - 2k\right)}} & \text{if } n \text{ is odd}
    \end{array}\right.\]
    \item For $\lambda = (n-2,1,1)$ it is the sum $\sum_{k=0}^{\lfloor{\frac{n-5}{4}}\rfloor}{\MultTriv{n-3-4k}} + \sum_{k=0}^{\lfloor{\frac{n-6}{4}}\rfloor}{\MultTriv{n-4-4k}}$
    \[ + \left\{\begin{array}{ll}
        \sum_{k=0}^{\lfloor{\frac{n-6}{4}}\rfloor}\Schur{{\left(\frac{n-2}{2} - 2k\right)}} & \text{if } n \text{ is even} \\
        \sum_{k=0}^{\lfloor{\frac{n-3}{4}}\rfloor}\Schur{{\left(\frac{n+1}{2} - 2k\right)}} & \text{if } n \text{ is odd}
    \end{array}\right.\]
    \item For $\lambda = \TransposePartition{(n-3,3)}$ it is $\sum_{k=0}^{n-6}{\lfloor{\frac{k+3}{3}}\rfloor \Schur{{(n-4-k)}}} + a_n \Schur{{(1,1)}} + b_n\Schur{{(1)}}$
    \item For $\lambda = \TransposePartition{(n-3,2,1)}$ it is $\sum_{k=0}^{n-5}{\left\lfloor{\frac{2k+3}{3}}\right\rfloor\Schur{{(n-3-k)}}} + \left\lfloor{\frac{n-2}{3}}\right\rfloor\Schur{{(1,1)}} + \left\lfloor{\frac{n-4}{3}}\right\rfloor\Schur{{(1)}}$
    \item For $\lambda = \TransposePartition{(n-3,1,1,1)}$ it is $\sum_{k=0}^{n-6}{\left\lfloor{\frac{k+3}{3}}\right\rfloor\Schur{{(n-4-k)}}} + c_n \Schur{{(1,1)}} + d_n \Schur{{(1)}}$
    
        \item For $\lambda = \TransposePartition{(n-2,2)}$ it is $\sum_{k=0}^{\lfloor{n/2}\rfloor-2}{\Schur{{(n-3-2k)}}}$
    \item For $\lambda = \TransposePartition{(n-2,1,1)}$ it is $\sum_{k=0}^{\lfloor{(n-3)/2}\rfloor}{\Schur{{(n-2-2k)}}}$

\end{itemize}
where the numbers $a_n, b_n, c_n$ and $d_n$ are $\lfloor{n/6}\rfloor$ plus $6$-periodic functions of $n$. Explicitly,
\begin{eqnarray*}
a_n = \lfloor{n/6}\rfloor & + &(-1, 0 ,-1, 0 , 0 , 0 )_n \\
b_n = \lfloor{n/6}\rfloor &+& (-1, -1, -1, 0 ,-1, 0)_n \\
c_n = \lfloor{n/6}\rfloor &+& (0 , 0 , 0 , 0 , 1 , 0 )_n \\
d_n = \lfloor{n/6}\rfloor &+& (0 , -1, 0,0,0,0)_n
\end{eqnarray*}
where $(-)_n$ returns the entry in position residue of $n$ modulo $6$ (starting at residue $0$).
\end{conj}



\begin{rmk}
    Interestingly, the complicated patterns for $\lambda = (n-2,2)$ and $(n-2,1,1)$ become far simpler when \emph{summed together}; and the same is true for their conjugate partitions. For the partitions $\lambda = \TransposePartition{(n-3,3)}$ and $\TransposePartition{(n-3,1,1,1)}$ it is now their \emph{difference} that has a significantly simpler form:
    one can observe that $c_n - a_n$ and $d_n - b_n$ both take the value $1$ if $n$ is even and $0$ if $n$ is odd. 
    We have no guess as to why this should be the case.
\end{rmk}


\begin{rmk}
    After a first version of this paper was made public, Powell \cite{PowBabyBead} computed the composition factors of $\Specht{(n-1,1)}$ in $\gr H_c^n(F(X,n))$, related to the first case of Conjecture \ref{conj:patterns} via Euler characteristic. That said, verifying that the conjecture follows from Powell's work is challenging, since calculating the Euler characteristic involves plethysm coefficients whose determination is still an open problem. In the next proposition we prove this case of the conjecture by combining several ideas from this paper.
\end{rmk}

\begin{prop}\label{prop:Powell-baby-bead}
    The equivariant multiplicity of $\Specht{(n-1,1)}$ in codimension $1$ cohomology $\gr H_c^{n-1}(F(X,n))$ is  $\Schur{{(n/2)}}$ when $n$ is even, and $0$ when $n$ is odd.
\end{prop}

\begin{proof}
    Powell \cite[Theorem 3]{PowBabyBead} provides the equivariant multiplicity of $\Specht{(n-1,1)}$ in codimension $0$ cohomology $\gr H_c^n(F(X,n))$. To see that the equivariant multiplicity in codimension $1$ is at least as large as we claim, we note that when $n$ is even, our Proposition \ref{prop:alternating powers} shows that the $\SymGroup$-equivariant multiplicity of the Schur functor $\Schur{(n/2)}$ in codimension $1$ is the same as that of $\Schur{(n/2+1)}$ in codimension $0$, and Powell's calculation shows the representation $\Specht{n-1,1}\boxtimes \Schur{(n/2+1)}$ indeed occurs in codimension $0$.
    
    To show that the multiplicity is no larger than claimed, it is therefore enough to bound its dimension. The (non-equivariant) Euler characteristic of the $\Specht{(n-1,1)}$-multiplicity space can be read-off from the 2-step complex \eqref{eq:2-step isotypic}, and takes the value $(-1)^n(n-1)\binom{n+g-2}{g-2}$.
    One can then compute the non-equivariant multiplicity in codimension $0$ provided by Powell, and verify that the multiplicity in codimension $1$ has the dimension we claim.
\end{proof}

Using Proposition \ref{prop:cohom-moduli-space}, Conjecture \ref{conj:patterns} leads to the following conjectural irreducible multiplicities in $\gr_0^W H_c^{n+2}(\ModuliCurve{2, n}, \Q)$.

\begin{conj}\label{conj:patterns-M2n}
    The irreducible representation $\Specht{\lambda}$ appears in $\gr_0^W H_c^{n+2}(\ModuliCurve{2,n}, \Q)$ with the following multiplicity:
    \begin{itemize}
        \item For $\lambda = (n-2,2)$ it is $\begin{cases}
            \lfloor \frac{k^2-k+1}{3}\rfloor & \text{if } n = 4k \\
            \lfloor \frac{3k^2-k+4}{6}\rfloor & \text{if } n = 4k+1 \\
            \lfloor \frac{k(k-1)}{6}\rfloor & \text{if } n = 4k+2 \\
            0 & \text{if } n = 4k+3 \\
        \end{cases}.$
        \item For $\lambda = (n-2,1,1)$ it is $\begin{cases}
            \lfloor \frac{(k-1)(k-2)}{6}\rfloor & \text{if } n = 4k \\
            0 & \text{if } n = 4k+1 \\
            \lfloor \frac{k^2+k+1}{3}\rfloor & \text{if } n = 4k+2 \\
            \lfloor \frac{3k^2+3k+2}{6}\rfloor & \text{if } n = 4k+3 \\
        \end{cases}.$
        \item For $\lambda = \TransposePartition{(n-2,2)}$ it is $\begin{cases}
            0 & \text{if } n = 2k \\
            \lfloor \frac{(k-2)(k-1)}{6}\rfloor & \text{if } n = 2k + 1\\
        \end{cases}.$
        \item For $\lambda = \TransposePartition{(n-2,1,1)}$ it is $\begin{cases}
            \lfloor \frac{(k-2)(k-1)}{6}\rfloor & \text{if } n = 2k \\
            0 & \text{if } n = 2k+1 \\
        \end{cases}.$
    \end{itemize}
\end{conj}

The multiplicities in Conjectures \ref{conj:patterns} and \ref{conj:patterns-M2n} have nice presentations as generating functions. These can be found in Appendix \ref{app:generating-functions}.

\newpage

\appendix
\section{Full decomposition for \texorpdfstring{$10$}{10} particles} \label{app:full-computation}

We explain here how to combine insights from the different presentations of \S\ref{sec:geometric approach} to compute the full cohomology of $F(X, 10)$ for $X$ any wedge of circles. Conceptually, the $2$-term complex of \S\ref{section:2-step} gives access to individual $\SymGroup$-isotypic components, most effective for Schur functors $\Schur{\lambda}$ for \underline{$\lambda$ with few parts}. On the other hand, the Chevalley--Eilenberg complex of \S\ref{section:CE-complex} accesses individual $\GL$-isotypic components, most effective for $\Schur{\lambda}$ of \underline{large total degree $|\lambda|$}. Between these two, one can uniquely determine the complete representation.

To illustrate our approach we focus on the multiplicity of a few specific $\SymGroup[10]$-irreducibles in the bottom cohomology $H^{9}(F(X,10))$. Note that by the bound on polynomial degree in Proposition \ref{prop:vanishing coefficients} and since the highest degree term $\Phi^0[10,9]$ is also computed there, the only remaining multiplicities that need to be computed are those of Schur functors of degree $\leq 8$. In the following we will show how to compute the equivariant multiplicity of the $\SymGroup[10]$-representations $\Specht{(8,2)}$, $\Specht{(6,3,1)}$, $\Specht{(4^2,2)}$ and $\Specht{(3,2,1^5)}$.

\subsection*{Step 1: Lower bound}

First compute the lower bounds for these $\SymGroup[10]$-irreducible as in \S\ref{sec:lower-bound}, corrected with the knowledge of the exact multiplicity of the symmetric and alternating powers $\Schur{(k)}$ and $\Schur{(1^k)}$ from \S\ref{sec:multiplicity symmetric powers}:

\begin{table}[h]
    \centering
    \begin{tabular}{|m{1.7cm}|m{9.5cm}|}
\hline
$\SymGroup[10]$-irrep & $\GL$-equivariant multiplicities \\
\hline
$(8,2)$ & $\geq \Schur{(4)} + \Schur{(3)} + \Schur{(2,1)} + \Schur{(2)} + \Schur{(1^7)} + \Schur{(1^4)} + \Schur{(1^3)}$ \\
\hline
$(6,3,1)$ & $\geq \Schur{(5)} + 6\Schur{(4)} + 11\Schur{(3)} + 2\Schur{(2,1)} + 10\Schur{(2)} + \Schur{(1^6)} + 2\Schur{(1^5)} + 3\Schur{(1^4)} + 5\Schur{(1^3)} + 7\Schur{(1^2)} + 4\Schur{(1)}$ \\
\hline
$(4^2,2)$ & $\geq \Schur{(5)} + 6\Schur{(4)} + 6\Schur{(3)} + \Schur{(2,1)} + 10\Schur{(2)} + 2\Schur{(1^4)} + 6\Schur{(1^3)} + 5\Schur{(1^2)} + \Schur{(1)}$ \\
\hline
$(3,2,1^5)$ & $ \geq \Schur{(7)} + \Schur{(6)} + 2\Schur{(5)} + 3\Schur{(4)} + 3\Schur{(3)} + 4\Schur{(2)} + 2\Schur{(1^2)} + 2\Schur{(1)}$ \\
\hline
    \end{tabular}
    \caption{Lower bounds for $n=10$}
    \label{table:lower_bound_10}
\end{table}

We already know the multiplicity of all symmetric powers $\Schur{(m)}$, so we proceed to determine the multiplicities of Schur functors with two parts $\Schur{(a,b)}$.

\subsection*{Step 2: Traces on Schurs with two parts}
To gain understanding of the $\End(\Z^2)$-action on each of the above multiplicity spaces, we use the 2-step complex from \ref{section:2-step} with genus $g=2$ to compute its character. Start by computing the trace of the action induced by specific diagonal matrices on each multiplicity space: $diag(1,1), diag(1,2)$ and $diag(2,2)$  -- these calculations are not too demanding as they only involve one or two doubling maps on $S^1\vee S^1$.

Once these traces are obtained, we subtract from them the traces of the Schur functors appearing in the lower bound. We thus obtain the traces of terms missing from our lower bound. In our example, the traces corresponding to the lower bounds, as well as the true computed traces of our $3$ diagonal matrices and the difference between them are as in Table \ref{table:traces_rank2}
\begin{table}[h]
    \centering
    \begin{tabular}{|c||c|c|c|}
\hline
$\SymGroup[10]$-irreducible & Lower bound & Computed traces & Difference \\
\hline
$(8,2)$ & $[14, 59, 140]$ & $[14, 59, 140]$ & $[0, 0, 0]$ \\ 
\hline
$(6,3,1)$ & $[129, 522, 1220]$ & $[145, 580, 1396]$ & $[16, 58, 176]$ \\ 
\hline
$(4^2,2)$ & $[99, 428, 1024]$ & $[109, 464, 1136]$ & $[10, 36, 112]$ \\ 
\hline
$(3,2,1^5)$ & $[72, 684, 2256]$ & $[72, 684, 2256]$ & $[0, 0, 0]$ \\ 
\hline
    \end{tabular}
    \caption{Traces for the matrices diag(1,1), diag(1,2) and diag(2,2)}
    \label{table:traces_rank2}
\end{table}


One thus immediately sees that for the first and last rows the lower bound is sharp, i.e. it accounts for all Schur functors $\Schur{(a,b)}$. The remaining rows are missing a $16$-dimensional and a $10$-dimensional representation respectively, with known character values at $diag(1,2)$ and $diag(2,2)$.


The problem thus reduces to finding a sum of Schur functors that produces the traces seen in the last column of Table \ref{table:traces_rank2}. Note that while the complete traces are rather large, their difference from our lower bound is significantly smaller. Moreover, as stated at the outset of this calculation, only Schur functors of degree $\leq 8$ may appear. The corresponding traces on these Schur functors are given in Table \ref{table:traces_Schur_rank2}.

\begin{table}[h]
    \centering
    \begin{tabular}{|c|c||c|c|}
\hline
Schur functor & Traces & Schur functor & Traces \\
\hline
$(2,1)$ & $[2, 6, 16]$ & $(3,1)$ & $[3, 14, 48]$ \\ 
\hline
$(2^2)$ & $[1, 4, 16]$ & $(4,1)$ & $[4, 30, 128]$ \\ 
\hline
$(3,2)$ & $[2, 12, 64]$ & $(5,1)$ & $[5, 62, 320]$ \\ 
\hline
$(4,2)$ & $[3, 28, 192]$ & $(3^2)$ & $[1, 8, 64]$ \\ 
\hline
$(6,1)$ & $[6, 126, 768]$ & $(5,2)$ & $[4, 60, 512]$ \\ 
\hline
$(4,3)$ & $[2, 24, 256]$ & $(7,1)$ & $[7, 254, 1792]$ \\ 
\hline
$(6,2)$ & $[5, 124, 1280]$ & $(5,3)$ & $[3, 56, 768]$ \\ 
\hline
$(4^2)$ & $[1, 16, 256]$ & &\\ 
\hline
    \end{tabular}
    \caption{Traces of Schur functors at diag(1,1), diag(2,1) and diag(2,2)}
    \label{table:traces_Schur_rank2}
\end{table}

It is useful to note that $s_{(a,b)}(2,2) = 2^{a+b}s_{(a,b)}(1,1)$ for any Schur polynomial $s_{(a,b)}$, i.e. the third trace in the table is always $2^{a+b}$ larger than the first. Therefore since $\Schur{(2,1)}$ is the only Schur functor in our table for which $2^{a+b} < 16$, any sum of other Schur functors must have its third trace at least $16$ times larger than than the first. Since the missing Schur functors for $\Specht{(6,3,1)}$ and $\Specht{(4^2,2)}$ have their third trace $< 16$ times their first, one immediately concludes that $\Schur{(2,1)}$ occurs several times.

Let us therefore account for these copies of $\Schur{(2,1)}$ and remove them from the trace until the third column is $\geq 16$ times the first. For example, from the traces $[10,36,112]$ corresponding to $\Specht{(4^2,2)}$ we subtract $m \times [2,6,16]$ with $m$ such that $16\cdot(10-2m) \leq 112 - 16m$. That is, $m \geq 3$, leaving traces $[4,18,64]$ to be accounted for. In general the $\Schur{(2,1)}$-multiplicity is $\geq c_1- c_3/16$ where $c_i$ is the $i$-th trace in our table.

At this point the remaining Schur functors can be uniquely determined. The Schur functors $\Schur{(a,b)}$ that need to be added can be found in Table \ref{table:additional_Schur_genus2}; the interested reader is invited to verify that no other combination of Schur functors gives the desired traces.

\begin{table}[h]
    \centering
    \begin{tabular}{|c|c|}
\hline
$\SymGroup[10]$-irreducible & Missing Schur functors \\
\hline
$(8,2)$ & $0$ \\
\hline
$(6,3,1)$ &  $5\Schur{(2,1)} + 2\Schur{(3,1)}$\\
\hline
$(4^2,2)$ & $3\Schur{(2,1)} + \Schur{(3,1)} + \Schur{(2,2)}$ \\
\hline
$(3,2,1^5)$ & $0$ \\
\hline
    \end{tabular}
    \caption{Corrections to the lower bounds in genus $2$}
    \label{table:additional_Schur_genus2}
\end{table}

We therefore add these Schur functors to our lower bound estimate and proceed to find Schur functors with $3$ or more parts.

\subsection*{Step 3: Ranks in genus 3 and 4}

One could attempt the same game in higher genus, i.e. finding multiplicities of $\Schur{(a,b,c)}$ and so on, but it turns out that one can (almost always) make due with only the nonequivariant ranks in genus $3$ and $4$. For example, Table \ref{table:ranks34} shows the ranks in genus $3$ and $4$ of the revised lower bound compared with the true rank computed using the $2$-step complex in \S\ref{section:2-step}.


\begin{table}[h]
    \centering
    \begin{tabular}{|c|c|c|c|}
\hline
$\SymGroup[10]$-irreducible & Lower bound & Actual ranks & Difference \\
\hline
$(8,2)$ & $[40, 90]$ & $[46, 126]$ & $[6, 36]$ \\ 
\hline
$(6,3,1)$ & $[405, 897]$ & $[411, 931]$ & $[6, 34]$ \\ 
\hline
$(4^2,2)$ & $[308, 691]$ & $[308, 691]$ & $[0, 0]$ \\ 
\hline
$(3,2,1^5)$ & $[217, 541]$ & $[217, 541]$ & $[0, 0]$ \\ 
\hline
    \end{tabular}
    \caption{Ranks in genus 3 and 4}
    \label{table:ranks34}
\end{table}

From Table \ref{table:ranks34} one sees that only rather small Schur functors may appear. Table \ref{table:rank_Schur34} lists the ranks of all Schur functors of degree $\leq 8$ with $3$ or $4$ parts. For most Specht modules of $\SymGroup[10]$ this table is already sufficient for uniquely determining the multiplicity of every Schur functor with $3$ or $4$ parts.


\begin{table}[h]
    \centering
    \begin{tabular}{|c|c||c|c|}
\hline 
Schur functor & Ranks in genus $3,4$ & Schur functor & Ranks in genus $3,4$ \\
\hline
$(2,1^2)$ & $[3, 15]$ & $(3,2^2)$ & $[3, 36]$ \\ 
\hline
$(3,1^2)$ & $[6, 36]$ & $(3,2,1^2)$ & $[0, 20]$ \\ 
\hline
$(2^2,1)$ & $[3, 20]$ & $(2^3,1)$ & $[0, 4]$ \\ 
\hline
$(2,1^3)$ & $[0, 4]$ & $(6,1^2)$ & $[21, 189]$ \\ 
\hline
$(4,1^2)$ & $[10, 70]$ & $(5,2,1)$ & $[24, 256]$ \\ 
\hline
$(3,2,1)$ & $[8, 64]$ & $(5,1^3)$ & $[0, 35]$ \\ 
\hline
$(3,1^3)$ & $[0, 10]$ & $(4,3,1)$ & $[15, 175]$ \\ 
\hline
$(2^3)$ & $[1, 10]$ & $(4,2^2)$ & $[6, 84]$ \\ 
\hline
$(2^2,1^2)$ & $[0, 6]$ & $(4,2,1^2)$ & $[0, 45]$ \\ 
\hline
$(5,1^2)$ & $[15, 120]$ & $(3^2,2)$ & $[3, 45]$ \\ 
\hline
$(4,2,1)$ & $[15, 140]$ & $(3^2,1^2)$ & $[0, 20]$ \\ 
\hline
$(4,1^3)$ & $[0, 20]$ & $(3,2^2,1)$ & $[0, 15]$ \\ 
\hline
$(3^2,1)$ & $[6, 60]$ & $(2^4)$ & $[0, 1]$ \\ 
\hline
    \end{tabular}
    \caption{Ranks of Schur functors}
    \label{table:rank_Schur34}
\end{table}

\begin{table}[h]
    \centering
    \begin{tabular}{|c|c|}
\hline
$\SymGroup[10]$-irreducible & Missing Schur functors \\
\hline
$(8,2)$ & $\Schur{(3,1^2)} + \Schur{(2,1^4)}$ \\
\hline
$(6,3,1)$ &  $2\Schur{(2,1^2)} + \Schur{(2,1^3)}$\\
\hline
$(4^2,2)$ & $0$ \\
\hline
$(3,2,1^5)$ & $0$ \\
\hline
    \end{tabular}
    \caption{Remaining corrections to the lower bounds}
    \label{table:remaining_Schur}
\end{table}

However, this is not always the case, e.g. for partition $(8,2)$ in our example. Let us explain why the remaining combination of Schurs occuring in our examples are as presented in Table \ref{table:remaining_Schur}. To gain more insight one can take the following steps:
\begin{itemize}
    \item As before, with $\Schur{(2,1)}$ having the smallest ratio between its first and third traces, now it is $\Schur{(2,1^2)}$ that has the smallest ratio between its ranks in genus $3$ and $4$ (at least among the Schurs that are small enough to contribute nontrivially). Thus $\Schur{(2,1^2)}$ must appear with multiplicity high enough to make the second column $\leq 6$ times the first. E.g. it appears at least once for partition $(6,3,1)$ leaving unknown ranks $[3,19]$, at which point it is clear that it must appear with multiplicity $2$ to account for the rank in genus $3$. The remaining ranks $[0,4]$ do not uniquely determine the multiplicities of Schur functors with $4$ parts.

    \item 
    For partition $(8,2)$ ranks alone are not enough to uniquely determine the multiplicity of $\Schur{(3,1^2)}$. However, it is feasible to compute the trace of the matrix $diag(2,1,1)$, and this additional information is sufficient.
    \item The second term of partition $(8,2)$, the functor $\Schur{(2,1^4)}$, does not even show up until one considers wedges of $\geq 5$ circles. However, since the representation $\Specht{(8,2)}$ has relatively small dimension, it is feasible to compute the nonequivariant rank of the isotypic component $H^9_c(F(X,10))_{(8,2)}$ in genus $5$ using the $2$-term complex. This is sufficient for determining the $\Schur{(3,1^2)}$ term from the previous point, and further detects that there exists some $\Schur{\lambda}$ for $\lambda$ with $5$ parts (though it can not decide which). 



\end{itemize}

With these additional considerations we have completely accounted for all Schur functors with $\leq 3$ parts and reduced the ambiguity regarding functors with $4$ parts to a small number of cases of very small dimension: deciding between $\Schur{(2,1^3)}, \Schur{(2^3,1)}$ and $\Schur{(2^4)}$. 

Looking further to $\Schur{\lambda}$ for $\lambda$ with $\geq 5$ parts we reach the computational limits of the 2-term complex, where it is no longer feasible to compute further ranks or traces. Instead, observe that any yet undetected Schur functor must have relatively large polynomial degree. More precisely, any Schur functor with $\geq 5$ parts is either an exterior power or it has total polynomial degree $\geq 6$, as are the degrees of $\Schur{(2^3,1)}$ and $\Schur{(2^4)}$.


\subsection*{Step 4: CE-complex in high polynomial degree}

Fortunately, at high polynomial degree a new tool becomes available: high polynomial degrees appear as $\Phi^{p-1}[10,10-p]$ for $p$ small, so the Chevalley--Eilenberg complex more readily gives access to the nonequivariant multiplicity of individual Schur functors. Although this complex forgets the data as to which $\SymGroup$-isotypical component they belong, and further simplification is needed.

This couples with the crucial observation that by plugging wedges of even spheres into the CE-complex, the partitions of $\Schur{\lambda}$ are the conjugate of the ones appearing for wedges of circles. For example the functor $\Schur{\TransposePartition{(2,1^6)}}$, which would have been computationally expensive to access and require on a wedge of $\geq 7$ circles, contributes $\Schur{(7,1)}$ on a wedge of only two even spheres! And its high polynomial degree makes it even easier to access.


With this, the last step in our calculation is to compute the nonequivariant multiplicity of Schur functors of polynomial degree $\geq 6$ and compare them with the total multiplicity of those functors in our lower bound. If they are found to agree, the bound is sharp and all Schur functors have been counted. Fortunately, this always turned out to be the case


To make the last computations feasible we use one more trick: inputting wedges of spheres of different dimensions. Let us explain one such computation in detail, and for the other ones we will only specify what the analogous computation produces.

\subsection*{Step 4': Mixing spheres of different dimensions}

Say we want to compute the nonequivariant multiplicity of the Schur functor $\Schur{\TransposePartition{(2,1^4)}}$, which on even spheres gives $\Schur{(5,1)}(\tilde{H}^*(X))$ so two spheres suffice. Consider the space $X = S^2\bigvee S^4$.

The CE-complex for this $X$ admits a multigrading where the multidegree $(a,b)$ consists of tensors
\[
    (H^2(S^2)^{\otimes a}\otimes H^4(S^4)^{\otimes b})^{\oplus \binom{a+b}{a}} \leq \tilde{H}^*(X)^{\otimes (a+b)}.
\]
This multigrading is preserved by the CE-differential, so one may decompose the complex and consider one multidegree at a time. The $(a,b)$-multigraded subcomplex is substantially smaller than most $\Schur{\lambda}$-isotypic components, and furthermore it is spanned by a basis of pure tensors. These facts make calculations feasible, and we are able to determine the $(a,b)$-multigraded part of the CE-homology.

At the same time, $\EndSpace{X}$ acts on these subcomplexes, giving them the structure of polynomial $\GL(1)\times \GL(1)$-representations, with the $(a,b)$-multigraded component coinciding with the $\Sym^a(H^2(X))\otimes \Sym^b(H^4(X))$-isotypic component of this action. Knowing the multiplicity of each of these representations in homology, one is able to deduce the multiplicity of every $\Schur{\lambda}$ for $\lambda$ with $2$ parts: one needs only to count how often every $\Sym^a\otimes \Sym^b$ occurs in each $\Schur{\lambda}$.



Let us consider the $(5,1)$-multigraded component.
One can compute\footnote{In general the multiplicity with which $\Sym^{a_1}\otimes\ldots\otimes \Sym^{a_r}$  appears in $\Schur{\lambda}$ is equal to the number of semistandard Young tableax of shape $\lambda$ filled with $a_i$ many $i$'s.} that $\Sym^5\otimes \Sym^1$ only occurs once in $\Schur{(5,1)}$ and once in $\Schur{(6)}$. It follows that its total multiplicity in CE-homology is $\Phi^3[10,(5,1)] \oplus \Phi^3[10,(6)]$. Given that we already know the multiplicities of all symmetric powers, this determines the rank of $\Phi^3[10,(5,1)]$.

One then compares this rank with the lower bound, and find that they agree. It follows that the bound accounts for the entire multiplicity of $\Schur{(2,1^4)}$ on wedges of circles, which is thus known equivariantly. Note further that approaching this calculation with the multigraded components has the added benefit that it is sensitive to multiple Schur functors at a time: if the lower bounds of all of them sum to the calculated multiplicity of the multigraded component, then all their lower bounds in fact exhaust the whole multiplicity space.

Other cases of this approach needed to completely determine $H^9_c(F(X,10))$ are:
\begin{itemize}
    \item $X = S^2 \bigvee S^4$, multidegree $(4,3)$: computing rank of $\Phi^2[10,(4,3)] + \Phi^2[10,(5,2)] + \Phi^2[10,(6,1)] + \Phi^2[10,(7)]$.
    \item $X = S^1 \bigvee S^2$, multidegree $(2,5)$: computing rank of $\Phi^2[10,(5,1^2)] + \Phi^2[10,(6,1)]$.
    \item $X = S^2 \bigvee S^4$, multidegree $(4,4)$: computing rank of $\Phi^1[10, (4,4)] + \Phi^1[10,(5,3)] + \Phi^1[10,(6,2)] + \Phi^1[10,(7,1)] + \Phi^1[10,(8)]$.
    \item $X = S^2 \bigvee S^4 \bigvee S^6$, multidegree $(5,2,1)$: computing rank of $\Phi^1[10,(5,2,1)] + \Phi^1[10,(5,3)] + \Phi^1[10,(6,1,1)] + 2\cdot\Phi^1[10,(6,2)] + 2\cdot\Phi^1[10,(7,1)] + \Phi^1[10,(8)]$.
    \item $X = S^1\bigvee S^2$, multidegree $(3,5)$: computing rank of $\Phi^1[10,(5,1^3)] + \Phi^1[10,(6,1^2)]$. 
\end{itemize}
Note that in some of these cases we mix even and odd dimensional spheres. The partition $(5,1^3)$ has $4$ parts, and its transpose has $5$ parts. So it would take $\geq 4$ even spheres to access its multiplicity. But mixing even and odd spheres makes $\Schur{(5,1^3)}$ contribute nontrivially already with only two spheres.

In all these cases the rank obtained from the CE-homology calculation agrees with the rank obtained from Table \ref{table:final_answer10}. Therefore all Schur functors $\Schur{\lambda}$ for which $\Phi[10,\TransposePartition{\lambda}]$ appeared in one of the cases above cannot occur with higher multiplicity in any $\SymGroup[10]$-isotypical component. 

Careful bookkeeping shows that at this point every multiplicity of every Schur functor has been uniquely determined, therefore Table \ref{table:final_answer10} contains the full decomposition of every isotypical component.

\begin{table}[h]
    \vspace{-1.5cm}
    \hspace{-1cm}
    \begin{tabular}{|m{1.7cm}|m{15cm}|}
\hline
$\SymGroup[10]$-irrep & $\GL$-equivariant multiplicities in $\gr H^9_c(F(X,10))$\\
\hline
$(10)$ & $\Schur{(4,1)} + \Schur{(3,1^3)} + \Schur{(2,1^5)} + \Schur{(1^8)}$ \\
\hline
$(9,1)$ & $\Schur{(5)}$ \\
\hline
$(8,2)$ & $\Schur{(4)} + \Schur{(3,1^2)} + \Schur{(3)} + \Schur{(2,1^4)} + \Schur{(2,1)} + \Schur{(2)} + \Schur{(1^7)} + \Schur{(1^4)} + \Schur{(1^3)}$ \\
\hline
$(8,1^2)$ & $\Schur{(4)} + \Schur{(3,1)} + \Schur{(3)} + \Schur{(2,1^3)} + \Schur{(2,1^2)} + \Schur{(2)} + \Schur{(1^6)} + \Schur{(1^5)} + \Schur{(1^2)} + \Schur{(1)}$ \\
\hline
$(7,3)$ & $\Schur{(5)} + 4\Schur{(3)} + \Schur{(2,1^2)} + 2\Schur{(2,1)} + 2\Schur{(2)} + \Schur{(1^5)} + \Schur{(1^4)} + \Schur{(1^3)} + 2\Schur{(1^2)} + \Schur{(1)}$ \\
\hline
$(7,2,1)$ & $\Schur{(5)} + 3\Schur{(4)} + \Schur{(3,1)} + 5\Schur{(3)} + \Schur{(2,1^3)} + \Schur{(2,1^2)} + 4\Schur{(2,1)} + 5\Schur{(2)} + \Schur{(1^6)} + \Schur{(1^5)} + 2\Schur{(1^4)} + 3\Schur{(1^3)} + 3\Schur{(1^2)} + 2\Schur{(1)}$ \\
\hline
$(7,1^3)$ & $\Schur{(6)} + 2\Schur{(4)} + 2\Schur{(3)} + \Schur{(2,1^2)} + \Schur{(2,1)} + 3\Schur{(2)} + \Schur{(1^5)} + \Schur{(1^4)} + \Schur{(1^3)} + 2\Schur{(1^2)}$ \\
\hline
$(6,4)$ & $2\Schur{(4)} + 2\Schur{(3)} + 2\Schur{(2,1)} + 4\Schur{(2)} + \Schur{(1^4)} + 3\Schur{(1^3)} + 2\Schur{(1^2)}$ \\
\hline
$(6,3,1)$ & $\Schur{(5)} + 6\Schur{(4)} + 2\Schur{(3,1)} + 11\Schur{(3)} + \Schur{(2,1^3)} + 2\Schur{(2,1^2)} + 7\Schur{(2,1)} + 10\Schur{(2)} + \Schur{(1^6)} + 2\Schur{(1^5)} + 3\Schur{(1^4)} + 5\Schur{(1^3)} + 7\Schur{(1^2)} + 4\Schur{(1)}$ \\
\hline
$(6,2^2)$ & $\Schur{(5)} + 4\Schur{(4)} + 7\Schur{(3)} + 6\Schur{(2,1)} + 8\Schur{(2)} + 3\Schur{(1^4)} + 6\Schur{(1^3)} + 4\Schur{(1^2)} + \Schur{(1)}$ \\
\hline
$(6,2,1^2)$ & $2\Schur{(5)} + 6\Schur{(4)} + \Schur{(3,1)} + 13\Schur{(3)} + 2\Schur{(2,1^2)} + 8\Schur{(2,1)} + 10\Schur{(2)} + 2\Schur{(1^5)} + 3\Schur{(1^4)} + 5\Schur{(1^3)} + 7\Schur{(1^2)} + 5\Schur{(1)}$ \\
\hline
$(6,1^4)$ & $\Schur{(5)} + 2\Schur{(4)} + 4\Schur{(3)} + 3\Schur{(2,1)} + 4\Schur{(2)} + 2\Schur{(1^4)} + 3\Schur{(1^3)} + 2\Schur{(1^2)} + \Schur{(1)}$ \\
\hline
$(5^2)$ & $\Schur{(5)} + 3\Schur{(3)} + 3\Schur{(2,1)} + \Schur{(1^4)} + \Schur{(1^3)} + 2\Schur{(1)}$ \\
\hline
$(5,4,1)$ & $\Schur{(5)} + 5\Schur{(4)} + \Schur{(3,1)} + 10\Schur{(3)} + \Schur{(2,1^2)} + 7\Schur{(2,1)} + 9\Schur{(2)} + \Schur{(1^5)} + 3\Schur{(1^4)} + 5\Schur{(1^3)} + 6\Schur{(1^2)} + 3\Schur{(1)}$ \\
\hline
$(5,3,2)$ & $2\Schur{(5)} + 7\Schur{(4)} + \Schur{(3,1)} + 18\Schur{(3)} + \Schur{(2,1^2)} + 12\Schur{(2,1)} + 13\Schur{(2)} + \Schur{(1^5)} + 4\Schur{(1^4)} + 7\Schur{(1^3)} + 9\Schur{(1^2)} + 6\Schur{(1)}$ \\
\hline
$(5,3,1^2)$ & $\Schur{(6)} + \Schur{(5)} + 13\Schur{(4)} + 2\Schur{(3,1)} + 16\Schur{(3)} + \Schur{(2,1^2)} + 10\Schur{(2,1)} + 20\Schur{(2)} + \Schur{(1^5)} + 4\Schur{(1^4)} + 10\Schur{(1^3)} + 12\Schur{(1^2)} + 5\Schur{(1)}$ \\
\hline
$(5,2^2,1)$ & $4\Schur{(5)} + 8\Schur{(4)} + \Schur{(3,1)} + 19\Schur{(3)} + \Schur{(2,1^2)} + 11\Schur{(2,1)} + 16\Schur{(2)} + \Schur{(1^5)} + 3\Schur{(1^4)} + 8\Schur{(1^3)} + 11\Schur{(1^2)} + 6\Schur{(1)}$ \\
\hline
$(5,2,1^3)$ & $\Schur{(6)} + 3\Schur{(5)} + 8\Schur{(4)} + 14\Schur{(3)} + 9\Schur{(2,1)} + 13\Schur{(2)} + 3\Schur{(1^4)} + 6\Schur{(1^3)} + 8\Schur{(1^2)} + 5\Schur{(1)}$ \\
\hline
$(5,1^5)$ & $\Schur{(7)} + 2\Schur{(5)} + \Schur{(4)} + 5\Schur{(3)} + 2\Schur{(2,1)} + 3\Schur{(2)} + \Schur{(1^3)} + 2\Schur{(1^2)} + 2\Schur{(1)}$ \\
\hline
$(4^2,2)$ & $\Schur{(5)} + 6\Schur{(4)} + \Schur{(3,1)} + 6\Schur{(3)} + \Schur{(2^2)} + 4\Schur{(2,1)} + 10\Schur{(2)} + 2\Schur{(1^4)} + 6\Schur{(1^3)} + 5\Schur{(1^2)} + \Schur{(1)}$ \\
\hline
$(4^2,1^2)$ & $3\Schur{(5)} + 3\Schur{(4)} + 14\Schur{(3)} + \Schur{(2,1^2)} + 8\Schur{(2,1)} + 7\Schur{(2)} + \Schur{(1^5)} + 2\Schur{(1^4)} + 3\Schur{(1^3)} + 6\Schur{(1^2)} + 5\Schur{(1)}$ \\
\hline
$(4,3^2)$ & $\Schur{(5)} + 4\Schur{(4)} + \Schur{(3,1)} + 8\Schur{(3)} + \Schur{(2,1^2)} + 4\Schur{(2,1)} + 6\Schur{(2)} + \Schur{(1^5)} + \Schur{(1^4)} + 2\Schur{(1^3)} + 5\Schur{(1^2)} + 3\Schur{(1)}$ \\
\hline
$(4,3,2,1)$ & $4\Schur{(5)} + 15\Schur{(4)} + 2\Schur{(3,1)} + 25\Schur{(3)} + 16\Schur{(2,1)} + 23\Schur{(2)} + 4\Schur{(1^4)} + 11\Schur{(1^3)} + 14\Schur{(1^2)} + 9\Schur{(1)}$ \\
\hline
$(4,3,1^3)$ & $\Schur{(6)} + 4\Schur{(5)} + 9\Schur{(4)} + 17\Schur{(3)} + 9\Schur{(2,1)} + 16\Schur{(2)} + 2\Schur{(1^4)} + 7\Schur{(1^3)} + 10\Schur{(1^2)} + 5\Schur{(1)}$ \\
\hline
$(4,2^3)$ & $\Schur{(5)} + 7\Schur{(4)} + \Schur{(3,1)} + 8\Schur{(3)} + 4\Schur{(2,1)} + 11\Schur{(2)} + \Schur{(1^4)} + 5\Schur{(1^3)} + 6\Schur{(1^2)} + \Schur{(1)}$ \\
\hline
$(4,2^2,1^2)$ & $\Schur{(6)} + 4\Schur{(5)} + 11\Schur{(4)} + \Schur{(3,1)} + 19\Schur{(3)} + 9\Schur{(2,1)} + 16\Schur{(2)} + \Schur{(1^4)} + 5\Schur{(1^3)} + 11\Schur{(1^2)} + 8\Schur{(1)}$ \\
\hline
$(4,2,1^4)$ & $2\Schur{(6)} + 3\Schur{(5)} + 7\Schur{(4)} + 8\Schur{(3)} + 3\Schur{(2,1)} + 11\Schur{(2)} + 4\Schur{(1^3)} + 6\Schur{(1^2)} + 3\Schur{(1)}$ \\
\hline
$(4,1^6)$ & $\Schur{(6)} + \Schur{(5)} + \Schur{(4)} + 2\Schur{(3)} + 2\Schur{(2)} + 2\Schur{(1^2)} + \Schur{(1)}$ \\
\hline
$(3^3,1)$ & $\Schur{(6)} + 5\Schur{(4)} + 5\Schur{(3)} + 2\Schur{(2,1)} + 8\Schur{(2)} + 3\Schur{(1^3)} + 5\Schur{(1^2)} + \Schur{(1)}$ \\
\hline
$(3^2,2^2)$ & $3\Schur{(5)} + 3\Schur{(4)} + 12\Schur{(3)} + 7\Schur{(2,1)} + 5\Schur{(2)} + \Schur{(1^4)} + 2\Schur{(1^3)} + 4\Schur{(1^2)} + 5\Schur{(1)}$ \\
\hline
$(3^2,2,1^2)$ & $\Schur{(6)} + 3\Schur{(5)} + 9\Schur{(4)} + \Schur{(3,1)} + 13\Schur{(3)} + 6\Schur{(2,1)} + 14\Schur{(2)} + \Schur{(1^4)} + 5\Schur{(1^3)} + 8\Schur{(1^2)} + 4\Schur{(1)}$ \\
\hline
$(3^2,1^4)$ & $\Schur{(7)} + 4\Schur{(5)} + 2\Schur{(4)} + 9\Schur{(3)} + 4\Schur{(2,1)} + 4\Schur{(2)} + \Schur{(1^3)} + 3\Schur{(1^2)} + 4\Schur{(1)}$ \\
\hline
$(3,2^3,1)$ & $3\Schur{(5)} + 6\Schur{(4)} + \Schur{(3,1)} + 8\Schur{(3)} + 3\Schur{(2,1)} + 8\Schur{(2)} + 2\Schur{(1^3)} + 5\Schur{(1^2)} + 3\Schur{(1)}$ \\
\hline
$(3,2^2,1^3)$ & $2\Schur{(6)} + 2\Schur{(5)} + 7\Schur{(4)} + 8\Schur{(3)} + 3\Schur{(2,1)} + 9\Schur{(2)} + 2\Schur{(1^3)} + 5\Schur{(1^2)} + 3\Schur{(1)}$ \\
\hline
$(3,2,1^5)$ & $\Schur{(7)} + \Schur{(6)} + 2\Schur{(5)} + 3\Schur{(4)} + 3\Schur{(3)} + 4\Schur{(2)} + 2\Schur{(1^2)} + 2\Schur{(1)}$ \\
\hline
$(3,1^7)$ & $\Schur{(8)} + \Schur{(6)} + \Schur{(4)} + \Schur{(2)}$ \\
\hline
$(2^5)$ & $2\Schur{(4)} + 2\Schur{(2)} + \Schur{(1^3)}$ \\
\hline
$(2^4,1^2)$ & $2\Schur{(5)} + \Schur{(4)} + 4\Schur{(3)} + 2\Schur{(2,1)} + \Schur{(2)} + \Schur{(1^2)} + 2\Schur{(1)}$ \\
\hline
$(2^3,1^4)$ & $\Schur{(6)} + \Schur{(5)} + \Schur{(4)} + 2\Schur{(3)} + 2\Schur{(2)} + \Schur{(1^2)}$ \\
\hline
$(2^2,1^6)$ & $\Schur{(7)} + \Schur{(5)} + \Schur{(3)} + \Schur{(1)}$ \\
\hline
$(2,1^8)$ & 0 \\
\hline
$(1^{10})$ & $\Schur{(9)}$ \\
\hline
    \end{tabular}
    \caption{Equivariant multiplicity of each isotypic component in $\gr H_c^{9}(F(X,10))$}
    \label{table:final_answer10}
\end{table}

\section{Conjectural generating functions}\label{app:generating-functions}
We present here the conjectural multiplicities from Conjectures \ref{conj:patterns} and \ref{conj:patterns-M2n} in the form of generating functions.

Let $\Lambda$ denote the ring of symmetric functions (see e.g. \cite{MacDonald}). Some of the generating functions below are expressed as elements of $\Lambda[[t]]$, the $\lambda$-ring of power series in $t$ with coefficients in $\Lambda$. Let $\operatorname{ch}: K_0(\operatorname{Fun_{Poly}})\to \Lambda$ be the linear function on the Grothendieck group of polynomial functors, defined by sending the Schur functor $\Schur{\lambda}$ to the Schur symmetric function $s_{\lambda}$, and let $\operatorname{Exp}\colon t\Lambda[[t]]\to \Lambda[[t]]$ be the plethystic exponential, defined e.g. in \cite{GK-modular-operad}.

Consider partitions with one long row $\lambda[n] = (n-|\lambda|,\lambda)$. The generating functions
\begin{align*}
    &\sum_{n\geq 1}{(-t)^n \cdot \operatorname{ch}_{\GL(g)}\left(\gr H_c^{n-1}(F(\bigvee_g S^1, n))\otimes_{\SymGroup}\Specht{\lambda[n]}\right)}\quad\text{and}\\
    &\sum_{n\geq 1}{t^n\cdot\langle\Specht{\lambda[n]},\ \gr_0^W H_c^{n+2}(\ModuliCurve{2,n},\Q)\rangle_{\SymGroup}}
\end{align*}
are, respectively,
\begin{itemize}
    \item for $\lambda[n] = (n)$ they are $\frac{t^2(\operatorname{Exp}(s_1(t^2-t))-1)}{1-t} - t$ and $\frac{(t^4-t^7 + t^{10})(1+t^3)}{(1-t^4)(1-t^{12})}$  (see \ref{prop:known-mult-PV}),
    \item for $\lambda[n] = (n-1,1)$ they are $\operatorname{Exp}(s_1 t^2)-1$ and $\frac{t^{12}}{(1-t^4)(1-t^{12})}$ (see \ref{prop:Powell-baby-bead}),
    \item for $\lambda[n] = (n-2,2)$ they are $\frac{1}{1-t^4}\Big((t-t^4)(1+s_1t^2-\operatorname{Exp}(s_1t^2))-(t^3+t^4+t^5)\cdot({\operatorname{Exp}(s_1(t^2-t))}-1)\Big)$ and $\frac{(t^5 + t^{13} + t^{14})(1+t^3)}{(1-t^4)^2(1-t^{12})}$,
    \item for $\lambda[n] = (n-2,1,1)$ they are $\frac{1}{1-t^4}\left((1-t^3)\frac{1+s_1t^2-\operatorname{Exp}(s_1t^2)}{t}+t^4(\operatorname{Exp}(s_1(t^2-t))-1)\right)$ and $\frac{(t^6-t^8+t^{11})(1-t^6)}{(1-t)(1-t^4)^2(1-t^{12})}$.
\end{itemize}

For partitions with one long column $\TransposePartition{\lambda[n]}$ the generating functions
\begin{align*}
    &\sum_{n\geq 1}{t^n \cdot \operatorname{ch}_{\GL(g)}\left(\gr H_c^{n-1}(F(\bigvee_g S^1, n))\otimes_{\SymGroup}\Specht{\TransposePartition{\lambda[n]}}\right)}\quad\text{and}\\
    &\sum_{n\geq 1}{t^n\cdot\langle\Specht{\TransposePartition{\lambda[n]}},\ \gr_0^W H_c^{n+2}(\ModuliCurve{2,n},\Q)\rangle_{\SymGroup}}
\end{align*}
are, respectively,
\begin{itemize}
        \item for $\TransposePartition{\lambda[n]} = (1^n)$ they are $t\operatorname{Exp}(s_1t)$ and $\frac{t^7}{(1-t^2)(1-t^6)}$  (see \ref{prop:known-mult-PV})
        \item for $\TransposePartition{\lambda[n]} = (2,1^{n-1})$ they are $0$ for both generating functions (see \ref{prop:known-mult-PV}),
        \item for $\TransposePartition{\lambda[n]} = (2,2,1^{n-4})$ they are $\frac{t^3(\operatorname{Exp}(s_1t)-1)}{1-t^2}$ and $\frac{t^9}{(1-t^2)^2(1-t^6)}$
        \item for $\TransposePartition{\lambda[n]} = (3,1^{n-3})$ they are $\frac{t^2(\operatorname{Exp}(s_1t)-1)}{1-t^2}$ and $\frac{t^8}{(1-t^2)^2(1-t^6)}$
        \item for $\TransposePartition{\lambda[n]} = (2,2,2,1^{n-6})$ they are $\frac{t^4(\operatorname{Exp}(s_1t)-1-s_1t)}{(1-t)(1-t^3)} + \frac{t^7(s_1t^2+s_{1,1})}{(1-t^2)(1-t^3)}$ and $\frac{t^7-t^8+t^{10}-t^{13}+t^{14}}{(1-t)(1-t^2)(1-t^3)(1-t^6)}$
        \item for $\TransposePartition{\lambda[n]} = (3,2,1^{n-5})$ they are $\frac{(t^3+t^5)(\operatorname{Exp}(s_1t)-1-s_1t) + t^5(s_1t^2 + s_{1,1})}{(1-t)(1-t^3)}$ and $\frac{t^5-t^7+t^9+t^{13}}{(1-t)(1-t^2)(1-t^3)(1-t^6)}$
        \item for $\TransposePartition{\lambda[n]} = (4,1^{n-4})$ they are $\frac{t^4(\operatorname{Exp}(s_1t)-1-s_1t)}{(1-t)(1-t^3)} + \frac{t^4(s_1t^2+s_{1,1})}{(1-t^2)(1-t^3)}$ and $\frac{t^4-t^5+t^{11}}{(1-t)(1-t^2)(1-t^3)(1-t^6)}$.
    \end{itemize}

\begin{rmk}
    The agreement of these expressions with the conjectural characters in Conjecture \ref{conj:patterns} has been verified computationally up to a high degree of confidence. However, we have not proved this formally. 
\end{rmk}

\printbibliography

\end{document}